\newtheorem{lemma}{Lemma}[section]
\newtheorem{theorem}[lemma]{Theorem}
\newtheorem{corollary}[lemma]{Corollary}
\newtheorem{proposition}[lemma]{Proposition}
\theoremstyle{definition}
\newtheorem{definition}[lemma]{Definition}
\newtheorem{remark}[lemma]{Remark}
\newtheorem{example}[lemma]{Example}
\theoremstyle{remark}
\newtheorem*{remark*}{Remark}
\newtheorem*{note*}{Note}
\newenvironment{proof*}[1][\proofname]{%
  \proof[#1]}{\endproof}
\numberwithin{equation}{lemma}
\newcommand{\diag}{\operatorname{diag}}
\newcommand{\Bl}{\operatorname{Bl}}
\newcommand{\SymBl}{\operatorname{SymBl}}
\newcommand{\GL}{\operatorname{GL}}
\newcommand{\PGL}{\operatorname{PGL}}
\newcommand{\codim}{\operatorname{codim}}
\newcommand{\Spec}{\operatorname{Spec}}
\newcommand{\Proj}{\operatorname{Proj}}
\newcommand{\Sym}{\operatorname{Sym}}
\newcommand{\ord}{\operatorname{ord}}
\newcommand{\SL}{\operatorname{SL}}
\newcommand{\PSL}{\operatorname{PSL}}
\newcommand{\SO}{\operatorname{SO}}
\newcommand{\op}{\operatorname{op}}
\newcommand{\thickslash}{\mathbin{\!\!\pmb{\fatslash}}}
\newcommand{\Aff}{\mathbb{A}}
\newcommand{\Q}{\mathbb{Q}}
\newcommand{\ix}{\mathcal{X}}
\newcommand{\ig}{\mathcal{G}}
\newcommand{\iy}{\mathcal{Y}}
\newcommand{\ic}{\mathcal{C}}
\newcommand{\ie}{\mathcal{E}}
\newcommand{\iF}{\mathcal{F}}
\newcommand{\iu}{\mathcal{U}}
\newcommand{\cO}{{\mathcal{O}}}
\newcommand{\iz}{\mathcal{Z}}
\newcommand{\iA}{\mathcal{A}}
\newcommand{\iB}{\mathcal{B}}
\newcommand{\iC}{\mathcal{C}}
\newcommand{\iI}{\mathcal{I}}
\newcommand{\iJ}{\mathcal{J}}
\newcommand{\iK}{\mathcal{K}}
\newcommand{\iL}{\mathcal{L}}
\newcommand{\iM}{\mathcal{M}}
\newcommand{\iP}{\mathcal{P}}
\newcommand{\CC}{\mathbb{C}}
\newcommand{\G}{\mathbb{G}}
\newcommand{\ZZ}{\mathbb{Z}}
\newcommand{\GG}{\mathbb{G}}
\newcommand{\QQ}{\mathbb{Q}}
\newcommand{\PP}{\mathbb{P}}
\newcommand{\bX}{\mathbf{X}}
\newcommand{\bY}{\mathbf{Y}}
\newcommand{\bU}{\mathbf{U}}
\newcommand{\A}{\mathbb{A}}
\newcommand{\Pro}{\mathbb{P}}
\newcommand{\characteristic}{\operatorname{char}}
\newcommand{\sat}{\operatorname{sat}}
\renewcommand{\ss}{ss}
\newcounter{item-counter}
\newcommand{\Gmu}{\pmb{\mu}} 
\newcommand{\Galpha}{\pmb{\alpha}} 
\newcommand{\gitq}{/\!\!/} 
\newcommand{\red}{\mathrm{red}} 
\newcommand{\reg}{\mathrm{reg}} 
\newcommand{\rig}{\mathrm{rig}} 
\newcommand{\tame}{\mathrm{tame}} 
\newcommand{\maxlocus}{\mathrm{max}} 
\newcommand{\itemref}[1]{\eqref{#1}} 
\newcommand{\myitemi}[1]{\renewcommand{\theenumi}{#1}\item}
\newcommand{\spref}[1]{\href{http://stacks.math.columbia.edu/tag/#1}{#1}}
\newcommand{\toto}[4]{{#1} \xymatrix{\ar@<.5ex>[r]^-{#3}\ar@<-.5ex>[r]_-{#4}&{#2}}}
\newcommand{\equalizer}[2]{\xymatrix@1@M=0mm@C=10mm{#1%
\ar@<.5ex>@{+->+}[r] \ar@<-.5ex>@{+->+}[r] & #2}}
\date{Aug 9, 2020}
\begin{document}
\title[Canonical reduction of stabilizers of Artin stacks]{Canonical reduction of stabilizers for Artin stacks with good moduli spaces}

\author[D. Edidin]{Dan Edidin}
\address{Department of Mathematics, University of Missouri-Columbia, Columbia, Missouri 65211}
\email{edidind@missouri.edu}

\author[D. Rydh]{David Rydh}
\address{KTH Royal Institute of Technology\\Department of Mathematics\\SE\nobreakdash-100\ 44\ Stockholm\\Sweden}
\email{dary@math.kth.se}
\keywords{Good moduli spaces, Kirwan's partial desingularization, Reichstein transforms, geometric invariant theory}
\subjclass[2010]{Primary 14D23; Secondary 14E15, 14L24}

\thanks{The first author was partially supported by the Simons collaboration
  grant 315460 while preparing this article. The second author was partially
  supported by the Swedish Research Council, grants 2011-5599 and 2015-05554.}

\begin{abstract}
  We present a complete generalization of Kirwan's partial desingularization
  theorem on quotients of smooth varieties. Precisely, we prove that if $\ix$ is an irreducible Artin stack with stable good moduli space
$\ix \stackrel {\pi} \to \bX$, then there is a canonical sequence of
birational morphisms of Artin stacks $\ix_n \to \ix_{n-1} \to 
\ldots \to \ix_0 = \ix$ with the following properties: (1)
the maximum dimension of a stabilizer of a point of $\ix_{k+1}$ is strictly smaller than the
maximum dimension of a stabilizer of $\ix_k$ and the final stack
$\ix_n$ has constant stabilizer dimension; (2) the morphisms
$\ix_{k+1} \to \ix_k$ induce projective and birational morphisms of good
moduli spaces $\bX_{k+1} \to \bX_{k}$. If in addition
the stack $\ix$ is smooth, then each of the intermediate stacks
$\ix_k$ is smooth and the final stack $\ix_n$ is a gerbe over
a tame stack. In this case the algebraic space 
$\bX_n$ has tame quotient singularities and is a partial desingularization
of the good moduli space $\bX$. 

When $\ix$ is smooth our result can be combined with D.\ Bergh's recent destackification theorem for tame
stacks to obtain a full desingularization of the algebraic space $\bX$.
\end{abstract}

\maketitle
\setcounter{tocdepth}{1}
\vspace{-5mm}
\tableofcontents

\section{Introduction}
Consider the action of a reductive group $G$ on a smooth projective
variety $X$. For any ample $G$-linearized line bundle  on
$X$ there is a corresponding projective geometric invariant theory (GIT)
quotient $X\gitq G$.  If $X^{s} = X^{ss}$, then  $X\gitq G$
has finite quotient singularities. However, if $X^{s} \neq X^{ss}$, then
the singularities of $X\gitq G$ can be quite bad. In a classic
paper, Kirwan \cite{Kir:85} used a careful analysis of stable
and unstable points on blowups to prove that if $X^s \neq \emptyset$,
then there is a sequence of blowups along smooth centers $X_n \to
X_{n-1} \to \ldots \to X_0 = X$ with the following properties: (1) The final
blowup $X_n$ is a smooth projective $G$-variety with $X_n^{s} =
X_n^{ss}$. (2) The map of GIT quotients $X_n\gitq G \to X\gitq G$ is
projective and birational. Since $X_n\gitq G$ has only finite quotient
singularities, we may view it as a partial resolution of the very
singular quotient $X\gitq G$.

Kirwan's result can be expressed in the language of algebraic stacks by
noting that for linearly reductive groups, a GIT quotient $X\gitq G$
can be interpreted as the good moduli space of the quotient stack
$[X^{ss}/G]$.  The purpose of this paper is to give a complete
generalization of Kirwan's result to algebraic stacks.

Precisely, we
prove (Theorem \ref{thm.main}) that if $\ix$ is a (not necessarily smooth)
Artin stack with {\em stable} good
moduli space $\ix \stackrel{\pi} \to \bX$, then there is a canonical sequence of
birational morphisms of stacks $\ix_n \to \ix_{n-1} \to \ldots \to
\ix_0 = \ix$ with the following properties: (1) If $\ix$
is irreducible, then the maximum dimension
of a stabilizer of a point of $\ix_{k+1}$ is strictly smaller than the
maximum dimension of a stabilizer of $\ix_k$ and the final stack
$\ix_n$ has constant stabilizer dimension. (2) The morphisms
$\ix_{k+1} \to \ix_k$ induce projective and birational morphisms of good
moduli spaces $\bX_{k+1} \to \bX_{k}$.

When the stack $\ix$ is smooth, then each intermediate stack
$\ix_{k}$ is smooth. This follows because
$\ix_{k+1}$ is an open substack
of the blowup of $\ix_k$ along the closed smooth substack $\ix^{\maxlocus}$
parametrizing points with maximal dimensional stabilizer.
Since $\ix_n$ has constant stabilizer dimension, it follows
(Proposition \ref{prop.tamegerbe}) that its moduli space $\bX_n$ has
only tame quotient singularities. Thus our theorem gives a canonical
procedure to partially desingularize the good moduli space $\bX$.

Even in the special case of GIT quotients, our method allows us to
avoid the intricate arguments used by Kirwan. In addition, we are not
restricted to characteristic~$0$. However, Artin stacks with good moduli
spaces necessarily have linearly reductive stabilizers at closed points.
In positive
characteristic this imposes a strong condition on the stack. Indeed by
Nagata's theorem if $G$ is a linearly reductive group over a field of
characteristic $p$, then $G^0$ is diagonalizable and $p \nmid [G:G^0]$.

Theorem \ref{thm.main} can
be combined with the destackification results of
Bergh~\cite{Ber:17,BeRy:14} to give a functorial resolution of the singularities of
good moduli spaces of smooth Artin stacks in arbitrary characteristic
(Corollary~\ref{cor.res-good-quot-sing}).

In the smooth case, our results were applied in \cite{EdSa:17}
to study intersection theory
on singular good moduli spaces. There, Theorem \ref{thm.main}
is used to show that the pullback $A^*_{\op}(\bX)_\Q \to A^*(\ix)_\Q$
is injective, where $A^*_{\op}$ denotes the operational Chow ring
defined by Fulton \cite{Ful:84}. 

When $\ix$ is singular, but possesses a virtual smooth structure, a variant
of Theorem \ref{thm.main} can be applied to define numerical invariants of the stack. In 
the papers \cite{KiLi:13, KLS:17} the authors use a construction similar
to Theorem \ref{thm.main} for GIT quotients to define generalized Donaldson--Thomas invariants 
of Calabi--Yau three-folds. For GIT quotient stacks, 
the intrinsic blowup of \cite{KiLi:13, KLS:17} 
is closely related to the saturated blowup (Definition \ref{def.saturated-blowup})
of a stack $\ix$ along the locus of
maximal dimensional stabilizer 
(see Remark \ref{rem.intrinsic-blowups}). 
Similar ideas are also being considered in
recent work in progress of Joyce and Tanaka.

\subsection*{Outline of the proof of Theorem \ref{thm.main}}
The key technical construction in our proof is the {\em saturated} blowup
of a stack along a closed substack.
\subsubsection*{Saturated Proj and blowups.}
If $\ix$ is an Artin stack with good moduli space
morphism $\ix \stackrel{\pi} \to \bX$, then
the blowup of $\ix$ along a closed substack $\ic$ does not necessarily
have a good moduli space.
The reason is that if 
$\iA$ is any sheaf
of graded ${\mathcal O}_\ix$-modules, then $\Proj_\ix(\iA)$
need not have a good moduli space. However, we prove
(Proposition \ref{prop.saturatedproj}) that there is a canonical open substack
$\Proj^\pi_\ix(\iA) \subset \Proj_\ix(\iA)$ whose good moduli space
is $\Proj_\bX(\pi_* \iA)$.
In general the morphism $\Proj^\pi_\ix(\iA) \to \ix$ is not proper, but
the natural morphism
$\Proj_\bX(\pi_* \iA) \to \bX$ is identified with the morphism
of good moduli spaces induced from the morphism of stacks
$\Proj^\pi_\ix(\iA) \to \ix$. We call $\Proj^\pi_\ix(\iA)$ the {\em
saturated} $\Proj$ of $\iA$ relative to the good moduli space morphism $\pi$
(Definition \ref{def.saturated-proj}).

If $\ic$ is a closed substack of $\ix$, given by the sheaf of ideals $\iI$,
then we call $\Bl^\pi_\ic \ix := \Proj^\pi_\ix\bigl(\bigoplus \iI^n\bigr)$ the {\em saturated
blowup} of $\ix$ along $\ic$. A key fact is that when $\ix$ and $\ic$ are
smooth over a field, then
$\Bl^\pi_\ic\ix$ has a particularly simple description (Proposition \ref{prop.reichstein.is.saturatedproj}) as the {\em Reichstein transform} of $\ix$ along $\ic$ which we define in the next paragraph.

Given a closed substack $\ic \subset \ix$ the 
{\em Reichstein transform} $R(\ix,\ic)$ of $\ix$ along $\ic$ is
the open substack of the blowup $\Bl_\ic \ix$ whose complement is the strict transform
of the saturation of $\ic$ with respect to the good moduli space morphism $\ix \stackrel{\pi} \to \bX$.
The Reichstein
transform was introduced
in \cite{EdMo:12} where toric methods were used to prove that there is a canonical sequence
of toric Reichstein transforms, called stacky star subdivisions,
which turns a smooth  Artin toric stack
into a smooth Deligne--Mumford toric stack.
The term ``Reichstein transform'' was inspired by
Reichstein's paper \cite{Rei:89} which contains the result (loc. cit.  Theorem 2.4) that if
$C \subset X$ is a smooth, closed $G$-invariant subvariety of a smooth, $G$-projective variety
$X$ defined over a field
then $(\Bl_C X)^{ss}$ is the complement of the strict transform
of the saturation of $C \cap X^{ss}$ in the blowup of $X^{ss}$
along $C \cap X^{ss}$.

\subsubsection*{Outline of the proof when $\ix$ is smooth and connected over an algebraically closed field}
If $\ix$ is a smooth Artin stack over a field
with good moduli space $\ix \to \bX$, then the substack $\ix^\maxlocus$,
corresponding to points with maximal dimensional stabilizer, is closed
and smooth. Thus $\ix' = R(\ix, \ix^\maxlocus)$ is a smooth Artin stack
whose good moduli space $\bX'$ maps properly to $\bX$ and is an
isomorphism over the complement of $\bX^\maxlocus$, the image of
$\ix^\maxlocus$ in $\bX$. The stability hypothesis ensures that as long as
the stabilizers are not all of constant dimension, $\bX^\maxlocus$ is
a proper closed subspace of $\bX$. Using the local structure theorem
of \cite{AHR:15} we can show (Proposition \ref{prop.inductionstep})
that the maximum dimension of the
stabilizer of a point of $\ix'$ is strictly smaller than the maximum
dimension of the stabilizer of a point of $\ix$. The proof
then follows by induction.

In the local case, Proposition \ref{prop.inductionstep}
follows from Theorem \ref{thm.fixed} which states that if $G$ is a smooth, connected and linearly
reductive group acting on a smooth affine scheme, then the equivariant
Reichstein transform $R_G(X,X^G)$ has no $G$-fixed points. 
The proof
of Theorem \ref{thm.fixed} is in turn reduced to the case that $X = V$ is a representation of $G$, where the statement can be checked by
direct calculation (Proposition \ref{prop.repfixed}).

\subsubsection*{The general case over an algebraically closed field}
For a singular stack $\ix$, the strategy is essentially the same as in the smooth
case. The locus of points $\ix^\maxlocus$ of maximum dimensional stabilizer has a canonical substack structure but this need not be reduced. The proof is a bit technical, particularly in positive characteristic, and is given in Appendix~\ref{app.fixed}.
When
$\ix$ is singular, the Reichstein transform $R(\ix, \ix^\maxlocus)$ is not
so useful: the
maximum stabilizer dimension need not
drop (cf.\ Example \ref{ex.false}) and the Reichstein transform
need not admit a good moduli space (cf.\ Example \ref{ex.false2}).

The saturated blowup, however, always admit a good moduli space. Moreover,
we prove that if $\ix^\maxlocus \neq \ix$ and $\ix$ is irreducible,
then the saturated blowup $\Bl^\pi_{\ix^{\maxlocus}} \ix$ has strictly smaller dimensional
stabilizers (Proposition~\ref{prop.inductionstep-singular}).
This is again proved by reducing to the case that 
$\ix = [X/G]$ where $X$ is an affine scheme and $G$ is a linearly reductive group.
Since, $X$ can be embedded
into a representation $V$ of $G$, we can use the corresponding result for
smooth schemes and functorial properties of saturated blowups (Proposition 
\ref{prop.saturatedblowups-and-basechange}) to prove the result.

\subsubsection*{The general case}
When $\ix$ is not of finite type over a field the locus
$\ix^\maxlocus$ still has a natural substack structure but this requires a careful proof which we do in Appendix~\ref{app.fixed}. If $\ix$ is smooth over a
base $S$, then  $\ix^\maxlocus$ is also smooth over $S$.
The main theorem in this case then follows by
passing to fibers over $S$ (Section~\ref{ssec.smooth.over.base}). In general,
we embed $\ix$ into a stack which is smooth over a base scheme $S$. This is done in
Section~\ref{ssec.sing.over.base}.

\subsection*{Conventions and notation}
We use the term Artin stack and algebraic stack interchangeably.
All algebraic stacks are assumed to be noetherian with affine diagonal.
A morphism $\ix \to \iy$ of stacks in groupoids is a {\em gerbe}
if it satisfies the equivalent conditions of \cite[Tag \spref{06NY}]{stacks-project}. If $\ix$ and $\iy$ are algebraic stacks \cite[Tag \spref{06QB}]{stacks-project} implies that there is an fppf covering $U \to \iy$ with $U$ an algebraic space such that $\ix \times_\iy U = BG$ where $G$ is an fppf group space
over $U$. In this paper all stacks are algebraic and in particular all gerbes are algebraic. 

A {\em point} of an algebraic stack $\ix$ is an equivalence
class of morphisms $\Spec K \stackrel{x} \to \ix$ where $K$ is a field, and
$(x', K') \sim (x'', K'')$ if there is a $k$-field $K$ containing
$K', K''$ such that the morphisms $\Spec K \to \Spec K' \stackrel{x\smash{'}} \to \ix$
and $\Spec K \to \Spec K'' \stackrel{x\smash{''}} \to \ix$ are isomorphic.
The set of points of $\ix$ is denoted $|\ix|$.

Since $\ix$ is noetherian, every point $\xi \in | \ix|$ is
algebraic \cite[Th\'eor\`eme 
11.3]{LaMo:00}, \cite[Appendix B]{Rydh:11}. This means that if $\Spec K \stackrel{x} \to \ix$ is a representative for $\xi$, then 
the morphism $x$ factors as $\Spec K \stackrel{\smash{\overline{x}}\vphantom{x}}
\to \ig_\xi \to \ix$, where $\overline{x}$ is faithfully flat and $\ig_\xi \to \ix$
is a representable monomorphism. Moreover, $\ig_{\xi}$
is a gerbe over a field $k(\xi)$ which is called the \emph{residue field} of
the point $\xi$. The stack $\ig_{\xi}$ is called the \emph{residual gerbe}
and is independent of the choice
of representative  $\Spec K \stackrel{x} \to \ix$.

Given a morphism $\Spec K \stackrel{x} \to \ix$,
define the {\em stabilizer} $G_x$ of $x$ as the fiber product:
\vspace{-1mm}
\[
\xymatrix{
G_x \ar[d] \ar[r] &  \Spec K  \ar[d]^{(x,x)} \\
\ix \ar[r]^-{\Delta_\ix} & \ix \times_k \ix.
}
\]
Since the diagonal is affine, $G_x$ is an affine $K$-group scheme.

When we work over an algebraically closed field $k$, any closed point is geometric
and is represented by a morphism $\Spec k \stackrel{x} \to \ix$. In this case the residual gerbe is $BG_x$ where $G_x$ is the stabilizer of $x$.

\subsection*{Acknowledgments}
We would like to thank the referees for several useful and inspiring suggestions.


\section{Stable good moduli spaces and statement of the main theorem}

\subsection{Stable good moduli spaces}
\begin{definition}[{\cite[Definition 4.1]{Alp:13}}]
  A morphism $\pi \colon \ix \to \bX$ from an algebraic stack to an
  algebraic space is a {\em good moduli space} if
\begin{enumerate}
\item\label{item.gms.cohaff}
 $\pi$ is {\em cohomologically affine}, meaning that the pushforward functor $\pi_*$
on the category of quasi-coherent ${\mathcal O}_\ix$-modules is exact.

\item\label{item.gms.geom}
  The natural map ${\mathcal O}_\bX \to \pi_* {\mathcal O}_\ix$ is an isomorphism.
\end{enumerate}
More generally, a morphism of Artin stacks $\phi \colon \ix \to \iy$
satisfying conditions \itemref{item.gms.cohaff} and~\itemref{item.gms.geom} is called a {\em good moduli space morphism}.
\end{definition}
\begin{remark} The morphism $\pi$
  is universal for maps to algebraic spaces, so the algebraic space $\bX$
  is unique up to isomorphism~\cite[Theorem 6.6]{Alp:13}.
  Thus, we can refer to $\bX$ as \emph{the} good
  moduli space of $\ix$. The good moduli space is
  noetherian~\cite[Theorem 4.16(x)]{Alp:13} and the morphism $\pi$ is of finite
  type~\cite[Theorem A.1]{AHR:15}.
\end{remark}
\begin{remark} If $\ix \to \bX$ is a good moduli space,
then the stabilizer of any closed
point of $\ix$ is linearly reductive by \cite[Proposition 12.14]{Alp:13}.
\end{remark}

\begin{remark} \label{rem.goodvscoarse} Let $\ix$ be a stack with finite inertia $I \ix \to \ix$. 
By the Keel--Mori theorem, there is a coarse moduli space
$\pi \colon \ix \to \bX$. Following \cite{AOV:08} we say that $\ix$
is {\em tame} if $\pi$ is cohomologically affine. This happens precisely when
the stabilizer groups are linearly reductive.
In this case
$\bX$ is also the good moduli space of $\ix$ by \cite[Example 8.1]{Alp:13}.
Conversely, if $\pi \colon \ix \to \bX$ is the good moduli space
of a stack such that all the stabilizers are 0-dimensional, then $\ix$
is a tame stack with coarse moduli space $\bX$ (Proposition \ref{prop.tameisgood}). More generally, if the stabilizers of $\ix$ have constant dimension
$n$ and $\ix$ is reduced, then $\ix$ is a gerbe over a tame stack whose coarse space is $\bX$
(Proposition \ref{prop.tamegerbe}).
\end{remark}

\begin{definition} \label{def.stablegms} Let $\pi \colon \ix \to
  \iy$ be a good moduli space morphism. A point $x$ of $\ix$ is {\em
   stable} relative to $\pi$ if $\pi^{-1}(\pi(x)) = \{x\}$ under the induced map of
  topological spaces $|\ix| \to |\iy|$. A point $x$ of $\ix$ is {\em
    properly stable} relative to $\pi$ if it is stable and $\dim G_x = \dim G_{\pi(x)}$.

We say $\pi$ is a stable (resp. properly stable) good moduli space morphism
if the set of stable (resp. properly stable) points is dense.
\end{definition}

The dimension of the fibers of the relative inertia
morphism $I \pi \to \ix$ is an upper semi-continuous function~\cite[Expos\'e VIb, Proposition 4.1]{SGA3}. Hence
the set $\ix_{> d} = \{x \in |\ix| : \dim G_x - \dim G_{\pi(x)} > d\}$ is closed.

\begin{proposition} \label{prop.stable}
The set of stable points defines an open (but possibly empty)
 substack $\ix^s \subset \ix$ which
is saturated with respect to the morphism $\pi$. If $\ix$ is irreducible
with generic point $\xi$, then
\[
\ix^s = \ix\smallsetminus \pi^{-1}(\pi(\ix_{>d}))
\]
where $d=\dim G_\xi - \dim G_{\pi(\xi)}$ is the minimum dimension of the
relative stabilizer groups.
In particular, $\dim G_x - \dim G_{\pi(x)}=d$ at all points of $\ix^s$.
\end{proposition}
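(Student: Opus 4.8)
The plan is to reduce every statement to the étale-local model provided by the structure theorem of \cite{AHR:15}. Around any closed point $x_0$ there is a strongly étale neighbourhood $[\Spec A/G_0] \to \ix$, with $G_0 = G_{x_0}$ linearly reductive and $x_0$ corresponding to a $G_0$-fixed point, such that $\Spec A^{G_0} \to \bX$ is étale, $\pi$ corresponds to the affine quotient $f\colon \Spec A \to \Spec A^{G_0}$, and fibres of $\pi$ correspond to fibres of $f$. Saturatedness of $\ix^s$ needs no localization: a stable point $x$ has $\pi^{-1}(\pi(x)) = \{x\}$, so $\pi^{-1}(\pi(\ix^s)) = \bigcup_{x\in\ix^s}\pi^{-1}(\pi(x)) = \ix^s$. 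The first real step is a fibrewise criterion: \emph{$\pi^{-1}(y)$ is a single point if and only if the relative stabilizer dimension $x \mapsto \dim G_x - \dim G_{\pi(x)}$ is constant on $\pi^{-1}(y)$}. One implication is trivial. For the other, recall from \cite{Alp:13} that every point of the fibre specializes to its unique closed point $x_0$; by upper semicontinuity $x_0$ attains the maximal value of the relative stabilizer dimension on the fibre, so if that function is constant then $\dim G_{x'} = \dim G_{x_0} = \dim G_0$ for every $x'$ in the local fibre $N = f^{-1}(y)$. Each such stabilizer is a subgroup of $G_0$ of the same dimension and hence contains $G_0^\circ$, so $G_0^\circ$ fixes every point of $N$; since $G_0$ is linearly reductive, $\mathcal{O}(N)^{G_0} = k(y)$, and therefore $N$ is a single (finite) $G_0$-orbit and $\pi^{-1}(y)$ is one point.

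Granting this criterion, I would derive the formula $\ix^s = \ix \smallsetminus \pi^{-1}(\pi(\ix_{>d}))$ as follows. A point $x$ lies in the right-hand side exactly when its fibre is disjoint from $\ix_{>d}$, i.e.\ when the relative stabilizer dimension is $\leq d$, hence identically $d$, on $\pi^{-1}(\pi(x))$; by the criterion this holds precisely when the fibre is a single point, that is, when $x$ is stable. The only asymmetry is that I must know a stable point has relative stabilizer dimension equal to the minimum $d$. Here I would use a dimension count in the chart. Because $\ix$ is irreducible and $\pi$ is surjective, each component of $\Spec A$ dominates $\Spec A^{G_0}$, and by Rosenlicht's theorem the generic fibre of $f$ is a single orbit of dimension $\dim G_0 - d$, so $f$ has relative dimension $\dim G_0 - d$. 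Chevalley's lower bound then forces $\dim f^{-1}(y) \geq \dim G_0 - d$ for every $y$. If $x$ is stable then $x = x_0$ is the fixed point and $N = f^{-1}(\pi(x))$ is the single point $\{x_0\}$, so $0 = \dim N \geq \dim G_0 - d$, whence $\dim G_0 \leq d$; as $\dim G_{x_0} = \dim G_0 \geq d$ always, we get $\dim G_{x_0} = d$. This proves both the formula and the final ``in particular'' assertion.

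For the openness asserted in general (without irreducibility), saturatedness is already done. Openness is étale-local on $\ix$, and under the strongly étale chart stability is detected fibrewise, so it suffices to prove it for $\ix = [\Spec A/G_0]$; there $\ix^s$ is the locus of points whose fibre over $\Spec A^{G_0}$ is a single closed orbit, which is open by classical geometric invariant theory. In the irreducible case openness is of course also immediate from the formula: $\ix_{>d}$ is closed by upper semicontinuity of the relative inertia, $\pi$ is universally closed by \cite{Alp:13}, so $\pi^{-1}(\pi(\ix_{>d}))$ is closed and $\ix^s$ is its open, saturated complement.

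The main obstacle is the forward direction of the dimension count: converting the purely topological fact that a stable fibre is a single point into the numerical equality $\dim G_{x_0} = d$. This is where irreducibility of $\ix$ and surjectivity of $\pi$ are essential, since they guarantee that the chart $\Spec A$ is equidimensional and dominates $\Spec A^{G_0}$, which is exactly what Chevalley's fibre-dimension bound requires; without this the stable value of the stabilizer dimension need not be the global minimum. A secondary delicate point, which matters in positive characteristic, is the passage $\mathcal{O}(N)^{G_0} = k(y)$ and the conclusion that $N$ is a single orbit: both rest on the linear reductivity of $G_0$, available here because $G_0$ is the stabilizer of a closed point of a stack admitting a good moduli space.
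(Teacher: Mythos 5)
Your overall architecture — an AHR chart $[\Spec A/G_0]$, a fibrewise criterion (single-point fibre $\Leftrightarrow$ constant relative stabilizer dimension, which is just the contrapositive of \cite[Proposition~9.1]{Alp:13}), and a fibre-dimension bound to pin the stable value at $d$ — is viable and runs parallel to the paper's argument. But three steps need repair. First, the appeal to Rosenlicht is false as stated: Rosenlicht's theorem concerns the rational quotient, whose function field is $(\Frac A)^{G_0}$, not the affine quotient $f\colon \Spec A\to\Spec A^{G_0}$, whose function field $\Frac(A^{G_0})$ is in general strictly smaller. For $\GG_m$ acting on $\A^2$ with weights $(1,1)$ the stack is irreducible and the ``generic fibre'' of $f$ is all of $\A^2$, not a one-dimensional orbit. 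Your argument survives only because the inequality you actually use, $\dim f^{-1}(y)\geq \dim G_0-d$, follows from Chevalley's bound together with the correct weaker fact that the generic fibre \emph{contains} the generic orbit; you should argue that way and drop the claim that $f$ has relative dimension $\dim G_0-d$.

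Second, your dimension count proves $\dim G_x - \dim G_{\pi(x)}=d$ only for \emph{closed} stable points, since it requires centring a chart at $x$ and identifying $x$ with a $G_0$-fixed point; but the inclusion $\ix^s\subseteq \ix\smallsetminus\pi^{-1}(\pi(\ix_{>d}))$ and the final assertion need this for every stable point (e.g.\ the generic point of a stable open set), and I see no non-circular reduction to the closed case in your setup. The paper avoids charts here entirely: for stable $x$ the fibre is the residual gerbe, so $\dim\pi^{-1}(\pi(x))=-\dim G_x$, and upper semicontinuity of fibre dimension along $x\in\overline{\{\xi\}}$, combined with $\dim\pi^{-1}(\pi(\xi))\geq -\dim G_\xi$, forces $\dim G_x=\dim G_\xi=d$ at once for all stable $x$. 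Third, in the reducible case you dispose of openness by citing ``classical GIT'' for the openness of the locus whose fibre over the affine quotient is a single orbit — but that is exactly the statement being proved, and the classical sources assume irreducibility; the missing step is the paper's reduction to the finitely many irreducible components $\iz$ (a point is stable iff it is stable in each component through it, so $\ix^s$ is the complement of the finitely many closed sets $\iz\smallsetminus\iz^s$). Relatedly, the proposition allows the target $\iy$ to be a stack (whence the term $\dim G_{\pi(x)}$); you never reduce to an algebraic-space target, which the paper does using that $\pi^{-1}(\pi(x))\to\pi(x)$ is universally injective for stable $x$. A smaller point: in characteristic $p$ a subgroup of $G_0$ of full dimension need only contain the \emph{reduced} identity component, not $G_0^{\circ}$ itself.
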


\begin{proof}
If $Y \to \iy$ is any morphism, then
$(Y \times_{\iy} \ix)^s = Y \times_{\iy} \ix^s$. Indeed, if $x$ is stable,
then $\ig_{x}\to \ig_{\pi(x)}$ is a good moduli space \cite[Lemma
  4.14]{Alp:13}, hence a gerbe, so $\pi^{-1}(\pi(x)) \to \pi(x)$ is universally
injective. We can thus reduce to the case where $\iy = \bX$ is a scheme.

If $\iz$ is an irreducible component of $\ix$, then the map $\iz \stackrel{\pi|_{\iz}}
\to \pi(\iz)$ is a good moduli space morphism by \cite[Lemma 4.14]{Alp:13}.

If $x$ is a point of $\ix$, then $\pi^{-1}(\pi(x)) = \bigcup_{\iz \subset \ix} (\pi|_{\iz})^{-1}(\pi(x))$
where the union is over the irreducible components of $\ix$ which contain $x$.
Thus a point $x$ is stable if and only if $(\pi|_\iz)^{-1}(\pi|_{\iz}(x)) = \{x\}$
for every irreducible component $\iz$ containing $x$. If we let $\iz^s$ be the
set of stable points for the good moduli space morphism $\pi|_{\iz}$, then
$\ix^s = \left(\bigcup_{\iz} (\iz \smallsetminus \iz^s)\right)^c$ where
the union is over all irreducible components of $\ix$. Since we assume that $\ix$ is noetherian there are only a finite number of irreducible components. Thus, it suffices to prove that $\iz^s$ is open for each irreducible component $\iz$. In other words, we are reduced to the case that $\ix$ is irreducible.

Then $\ix_{\leq d}=\ix_d=\{x \in |\ix| : \dim G_x = d \}$ is open and dense
and to see that
$\ix^s = \left(\pi^{-1}(\pi(\ix_{>d}))\right)^c$ we argue as follows.

By \cite[Proposition 9.1]{Alp:13} if $x$ is a point of $\ix$ and
$\pi^{-1}(\pi(x))$ is not a singleton, then $\pi^{-1}(\pi(x))$
contains a unique closed point
$y$ and $\dim G_y$ is greater than the dimension of any other stabilizer in
$\pi^{-1}(\pi(x))$. The point $y$ is clearly not in the open set $\ix_d$, so we conclude
that $(\ix^s)^c \subset \pi^{-1}(\pi(\ix_{>d}))$ or equivalently that $\ix^s \supset \left(\pi^{-1}(\pi(\ix_{>d}))\right)^c$.

To obtain the reverse inclusion we need to show that if $x$ is a point of $\ix$
and $\pi^{-1}(\pi(x)) = \{x\}$, then $\dim G_x = d$. Consider the stack
$\pi^{-1}(\pi(x))$ with its reduced stack structure. The monomorphism
from the residual gerbe
$\ig_x \to \ix$ factors through a monomorphism $\ig_x \to \pi^{-1}(\pi(x)))$. Since
$\pi^{-1}(\pi(x))$ has a single point, the morphism $\ig_x \to \pi^{-1}(\pi(x))_\red$ is an equivalence~\cite[Tag \spref{06MT}]{stacks-project}. Hence $\dim \pi^{-1}(\pi(x)) = \dim \ig_x = -\dim G_x$.

Let $\xi$ be the unique closed point in the generic fiber of $\pi$. Then
$x\in \overline{\{\xi\}}$ so by upper semi-continuity
$\dim G_x\geq \dim G_\xi$ and $\dim \pi^{-1}(\pi(x)) \geq \dim \pi^{-1}(\pi(\xi))$. Moreover, $\dim \pi^{-1}(\pi(\xi)) \geq -\dim G_\xi$ with equality if and
only if
$\pi^{-1}(\pi(\xi))$ is a singleton. It follows that
\vspace{-2mm}
\[
\dim \pi^{-1}(\pi(\xi)) \geq -\dim G_\xi \geq -\dim G_x = \dim \pi^{-1}(\pi(x))
\]
is an equality so the generic fiber $\pi^{-1}(\pi(\xi))$ is a singleton and $\dim G_x=\dim G_\xi=d$.
\end{proof}

Let $\ix$ be a reduced and irreducible Artin stack and let 
$\pi \colon \ix \to \bX$ be a good moduli space morphism  with $\bX$ an algebraic space and let $\bX^s = \pi(\ix^s)$. 
Since $\ix^s$ is saturated, $\bX^s$ is open in $\bX$.

\begin{proposition} \label{prop.gerbe}
With notation as in the preceding paragraph
$\ix^s$ is a gerbe over a tame stack with coarse
space $\bX^s$. Moreover, $\ix^s$ is the largest saturated open substack with this property.
\end{proposition}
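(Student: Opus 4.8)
The plan is to deduce the first assertion directly from Proposition~\ref{prop.tamegerbe} (the constant--stabilizer--dimension criterion recorded in Remark~\ref{rem.goodvscoarse}), applied to $\ix^s$ rather than to $\ix$, and then to establish maximality by a saturation argument. First I would verify that $\pi|_{\ix^s}\colon\ix^s\to\bX^s$ satisfies the hypotheses of Proposition~\ref{prop.tamegerbe}. Since $\ix^s$ is open and saturated we have $\ix^s=\pi^{-1}(\bX^s)$, so $\pi|_{\ix^s}$ is the base change of $\pi$ along the open immersion $\bX^s\into\bX$; as cohomological affineness and the isomorphism $\cO\to\pi_*\cO$ are both preserved by flat base change on the target, $\ix^s\to\bX^s$ is again a good moduli space morphism. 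The stack $\ix^s$ is reduced, being open in the reduced stack $\ix$. Finally, Proposition~\ref{prop.stable} gives $\dim G_x-\dim G_{\pi(x)}=d$ for every $x\in\ix^s$, and since $\bX$ is an algebraic space the groups $G_{\pi(x)}$ are trivial, so $\dim G_x=d$ is constant on $\ix^s$. Proposition~\ref{prop.tamegerbe} then yields that $\ix^s$ is a gerbe over a tame stack with coarse space $\bX^s$.

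For maximality, let $\iu\subset\ix$ be a nonempty saturated open substack that is a gerbe over a tame stack $\mathcal{T}$. Because $\ix$ is irreducible, $\iu$ is irreducible, hence connected, and contains the generic point $\xi$, whose stabilizer has the minimal dimension $d$. The relative inertia $I_{\iu/\mathcal{T}}\to\iu$ is flat (fppf-locally on $\mathcal{T}$ the gerbe is $B_{T'}\Gamma$ for a flat group $\Gamma$, whose relative inertia pulls back to $\Gamma$), so its fiber dimension is locally constant; as $\mathcal{T}$ is tame its stabilizers are $0$-dimensional, and connectedness of $\iu$ then forces $\dim G_x$ to be constant on $\iu$, necessarily equal to its value $d$ at $\xi$. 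Now take any $x\in\iu$: saturation gives $\pi^{-1}(\pi(x))\subset\iu$, so every point of $\pi^{-1}(\pi(x))$ has stabilizer dimension $d$, i.e.\ $\pi^{-1}(\pi(x))\cap\ix_{>d}=\emptyset$. Hence $\pi(x)\notin\pi(\ix_{>d})$, and by the formula $\ix^s=\ix\smallsetminus\pi^{-1}(\pi(\ix_{>d}))$ of Proposition~\ref{prop.stable} we conclude $x\in\ix^s$. Thus $\iu\subset\ix^s$, and since $\ix^s$ itself has the stated property it is the largest such substack.

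The routine ingredients are the stability of good moduli space morphisms under open base change and the constancy of $\dim G_x$ on $\ix^s$ coming from $\bX$ being an algebraic space. The one step deserving care is the converse to Proposition~\ref{prop.tamegerbe} needed for maximality, namely that a gerbe over a tame stack has constant stabilizer dimension; I expect this to be the main (if mild) obstacle, and I would justify it precisely via flatness of the band $I_{\iu/\mathcal{T}}\to\iu$ together with the connectedness of $\iu$, so that the band has globally constant relative dimension while the finite stabilizers of $\mathcal{T}$ contribute nothing to $\dim G_x$.
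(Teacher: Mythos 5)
Your proof is correct. The first assertion is handled exactly as in the paper: restrict $\pi$ to the saturated open $\ix^s=\pi^{-1}(\bX^s)$, note that $\ix^s$ is reduced with constant stabilizer dimension by Proposition~\ref{prop.stable}, and apply Proposition~\ref{prop.tamegerbe}. For maximality, however, you take a genuinely different route. The paper argues directly with topological spaces: the good moduli space morphism $\iu\to\bU$ factors through $\iu_\tame$, and since a gerbe and the coarse space map of a tame stack are both homeomorphisms, every fiber of $\pi|_{\iu}$ is a singleton, which is precisely the definition of stability; this needs neither connectedness of $\iu$ nor the explicit formula for $\ix^s$. You instead prove the converse half of the ``constant stabilizer dimension $\Leftrightarrow$ gerbe over a tame stack'' correspondence: flatness of the relative inertia $I_{\iu/\mathcal{T}}\to\iu$ gives locally constant fiber dimension, tameness of $\mathcal{T}$ kills the quotient, and connectedness (from irreducibility of $\ix$) pins the stabilizer dimension on $\iu$ down to $d$; you then conclude via $\ix^s=\ix\smallsetminus\pi^{-1}(\pi(\ix_{>d}))$ and saturation of $\iu$. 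This is sound --- the flatness of the relative inertia of an fppf gerbe is exactly what the paper itself invokes later in Corollary~\ref{cor.tamegerbe-singular} --- but it uses the irreducibility hypothesis more essentially than the paper's argument does, and it relies on the (standard but nontrivial) fact that flat, finitely presented group schemes have locally constant fiber dimension. A small simplification available to you: once you know the stabilizer dimension is constant on the fiber $\pi^{-1}(\pi(x))\subset\iu$, you could cite \cite[Proposition 9.1]{Alp:13} (a non-singleton fiber contains a closed point of strictly larger stabilizer dimension) to conclude the fiber is a singleton, avoiding the identification of that dimension with $d$ and hence the appeal to irreducibility.
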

\begin{proof}
  By Proposition \ref{prop.stable} the dimension of the stabilizer
  $G_x$ is constant at every point $x$ of $\ix^s$.
  Hence by Proposition \ref{prop.tamegerbe}, $\ix^s$ is a gerbe
  over a tame stack whose coarse space is $\bX^s$.

  Conversely, if $\iu$ is a saturated open substack which is a gerbe over a
  tame stack $\iu_\tame$, then the good moduli space morphism $\iu\to \bU$
  factors via $\iu_\tame$. Since $|\iu|\to |\iu_\tame|$ and $|\iu_\tame|\to
  |\bU|$ are homeomorphisms, it follows that $\iu\subset \ix^s$ by definition.
\end{proof}

\subsection{Examples}
In the following examples, we work over some field $k$.

\begin{remark}
  If $\ix = [X/G]$ is a quotient stack with $X = \Spec A$ an affine
  variety and $G$ is a linearly reductive group, then the good moduli
  space morphism $\ix \to \bX=\Spec A^G$ is stable if and only if the
  action is stable in the sense of \cite{Vin:00}. This means that
  there is a closed orbit of maximal dimension. The morphism $\ix
  \to \bX$ is properly stable if the maximal dimension equals $\dim
  G$. Following \cite{Vin:00} we will say that a representation $V$ of a
  linearly reductive group $G$ is {\em stable} if the action of $G$ on
  $V$ is stable.
\end{remark}

\begin{example}
  If $X = \A^2$ and $G=\GG_m$ acts on $X$ by $\lambda(a,b) = (\lambda a,
  b)$, then the good moduli space morphism $[X/G] \to \A^1$ is not
  stable since the inverse image of every point under the quotient map
  $\A^2 \to \A^1$, $(a,b) \mapsto b$ contains a point with stabilizer
  of dimension $1$. On the other hand, if we consider the action of
  $\GG_m$ given by $\lambda(a,b) = (\lambda^d a, \lambda^{-e} b)$ 
with $d,e > 0$, then
  the good moduli space morphism $[X/G] \to \A^1$ is properly stable,
  since the inverse image of the open set $\A^1 \smallsetminus \{0\}$
  is the Deligne--Mumford substack $[\left(\A^2 \smallsetminus
    V(xy)\right)/\GG_m]$.
\end{example}

\begin{example} Consider the action of $\GL_n$ on $\mathfrak{gl}_n$ via
conjugation in characteristic zero.  If we identify $\mathfrak{gl}_n$ with the space
$\A^{n^2}$ of $n \times n$ matrices, then the map 
$\mathfrak{gl}_n \to \A^n$ 
which sends a matrix to the coefficients of its characteristic polynomial
is a good quotient, so the map $\pi \colon [\mathfrak{gl}_n/\GL_n] \to \A^n$ is a good moduli space morphism.
The orbit of  an $n \times n$ matrix is closed if and only if it is diagonalizable. Since the stabilizer of a matrix with distinct eigenvalues is
a maximal torus $T$, such matrices have orbits
of 
dimension $n^2 - n = \dim \GL_n - \dim T$ which is maximal.

If $U \subset \A^n$ is the open set corresponding to polynomials with distinct
roots, then $\pi^{-1}(U)$ is a $T$-gerbe over the scheme $U$. Hence $\pi$ is
a stable good moduli space morphism, although it is not properly stable.
\end{example}

\subsection{Statement of the main theorem}
\begin{theorem} \label{thm.main}
  Let $\ix$ be an Artin stack, of finite type over a noetherian
  algebraic space $S$,
  with stable good moduli space $\ix \stackrel{\pi} \to \bX$ and
  let $\ie\subset \ix$ be an effective Cartier divisor (possibly empty).
There is a canonical
sequence of saturated blowups of Artin stacks
$\ix_n \to \ix_{n-1} \to \ldots \to \ix_{0} = \ix$,
along closed substacks $(\ic_\ell\subset \ix_\ell)_{0\leq \ell\leq n-1}$, and effective Cartier divisors
$(\ie_\ell\subset \ix_\ell)_{0\leq \ell\leq n}$, $\ie_0=\ie$,
with the following properties
for each $\ell=0,1,\dots,n-1$.
\begin{enumerate}
\myitemi{1a}\label{item.mainthm.center}
  $|\ic_\ell|$ is the locus
  in $\ix_\ell\smallsetminus \ix_\ell^s$ of points of maximal dimensional stabilizer.
\myitemi{1b}\label{item.mainthm.exc}
  $\ie_{\ell+1}$ is the inverse image of $\ic_\ell\cup \ie_\ell$.
\myitemi{1c}\label{item.mainthm.smooth}
  If $\ix$ is smooth over $S$ and $\ie_0$ is snc relative to $S$ (Definition~\ref{def.snc}), then
  $\ic_{\ell}$ and $\ix_{\ell+1}$ are smooth over $S$ and
  $\ie_{\ell+1}$ is snc relative to $S$.
\myitemi{2a}\label{item.mainthm.stablegms}
  There is a stable good moduli space
$\pi_{\ell+1}\colon \ix_{\ell+1} \to \bX_{\ell+1}$. If $\pi$ is properly stable, then so is
$\pi_{\ell+1}$.
\myitemi{2b}\label{item.mainthm.birational}
  The morphism $f_{\ell+1}\colon \ix_{\ell+1} \to \ix_{\ell}$ induces an isomorphism
  $\ix_{\ell+1}\smallsetminus f_{\ell+1}^{-1}(\ic_\ell)\to \ix_\ell\smallsetminus
  \pi_\ell^{-1}\bigl(\pi_\ell(\ic_\ell)\bigr)$. In particular, we have an isomorphism
  $\ix^s_{\ell+1}\smallsetminus \ie_{\ell+1}\to \ix^s_\ell\smallsetminus \ie_\ell$.
\myitemi{2c}\label{item.mainthm.gmsprojbirational}
  The morphism $\ix_{\ell+1} \to \ix_{\ell}$ induces a 
blowup of good moduli spaces $\bX_{\ell+1} \to \bX_{\ell}$ which is an
isomorphism over $\bX_{\ell} \smallsetminus \pi_{\ell}(\ic_{\ell})$
and in particular over $\bX_\ell^s$.
\myitemi{3}\label{item.mainthm.stabdim}
  The maximum dimension of the stabilizers of points of
  $\ix_{\ell+1} \smallsetminus \ix_{\ell+1}^s$ is strictly smaller than the maximum dimension of the stabilizers of points of 
$\ix_{\ell} \smallsetminus \ix^s_{\ell}$.
\end{enumerate}
The final stack $\ix_n$ has the following properties:
\begin{enumerate}\setcounter{enumi}{7}
\myitemi{4a}\label{item.mainthm.final.stable}
  Every point of $\ix_n$ is stable. In particular, $\pi_n$ is a
  homeomorphism and the dimension of the stabilizers of $\ix_n$ is locally
  constant.
\myitemi{4b}\label{item.mainthm.final.exc}
  $\ix_n\to \ix$ is an isomorphism over $\ix^s$ and
  $\ix_n\smallsetminus \ie_n=\ix^s\smallsetminus \ie$. In particular, $\ix^s\subset \ix_n$ is
  schematically dense.
\myitemi{4c}\label{item.mainthm.final.propstab}
  If $\ix$ is properly stable, then $\ix_n$ is a tame stack and
  $\bX_n$ its coarse moduli space.
\myitemi{4d}\label{item.mainthm.final.gerbe}
  If $\ix^s\smallsetminus \ie$ is a gerbe over a tame stack (e.g., if $\ix^s$ is reduced), then
  $\ix_n$ is a gerbe over a tame stack.
\end{enumerate}
The tame stack above is separated if and only if $\bX$ is separated.
The sequence $\ix_n\to\ix_{n-1}\to\ldots\to \ix$ does not depend on $\ie$.
\end{theorem}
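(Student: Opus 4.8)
The plan is to construct the sequence by induction, at each stage forming the \emph{saturated} blowup of $\ix_\ell$ along the locus of maximal stabilizer dimension inside its unstable locus, and then to check that this operation both preserves the stable good moduli space hypothesis and strictly lowers the top stabilizer dimension over the unstable locus, so that the process must terminate. Concretely, put $\ix_0=\ix$, $\ie_0=\ie$, $\pi_0=\pi$, and given a stable good moduli space $\pi_\ell\colon \ix_\ell\to\bX_\ell$ let $\ic_\ell\subset\ix_\ell$ be the closed substack---with the canonical, possibly non-reduced, structure constructed in the Appendix---whose support is the locus of points of maximal stabilizer dimension in $\ix_\ell\smallsetminus\ix_\ell^s$; this gives \itemref{item.mainthm.center}, and since it refers only to the stabilizer stratification it makes $\ic_\ell$ independent of $\ie$. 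I then set $\ix_{\ell+1}:=\Bl^\pi_{\ic_\ell}\ix_\ell=\Proj^\pi_{\ix_\ell}\bigl(\bigoplus_n\iI_{\ic_\ell}^n\bigr)$ and define $\ie_{\ell+1}$ by \itemref{item.mainthm.exc} as the inverse image of $\ic_\ell\cup\ie_\ell$; since the ideal of $\ic_\ell$ becomes invertible on the blowup and the pullback of the Cartier divisor $\ie_\ell$ remains Cartier, $\ie_{\ell+1}$ is an effective Cartier divisor.

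The good moduli space statements follow from the construction of the saturated $\Proj$. By Proposition \ref{prop.saturatedproj} the stack $\ix_{\ell+1}$ carries a good moduli space $\bX_{\ell+1}=\Proj_{\bX_\ell}\bigl(\pi_{\ell*}\bigoplus_n\iI_{\ic_\ell}^n\bigr)$, and the resulting morphism $\bX_{\ell+1}\to\bX_\ell$ is projective; it is birational because it restricts to an isomorphism over the dense stable locus, which is \itemref{item.mainthm.gmsprojbirational}. Property \itemref{item.mainthm.birational} holds because the saturated blowup alters only the saturated center $\pi_\ell^{-1}(\pi_\ell(\ic_\ell))$. Combining this with Proposition \ref{prop.stable}, and using that $\ic_\ell$ lies in the unstable locus so that the generic relative stabilizer dimension is preserved, I obtain \itemref{item.mainthm.stablegms}: $\pi_{\ell+1}$ is again stable, and properly stable when $\pi_\ell$ is. Finally, when $\ix$ is smooth and $\ie_0$ is snc, the center $\ic_\ell$ is smooth and the Reichstein-transform description of the saturated blowup (Proposition \ref{prop.reichstein.is.saturatedproj}) shows that $\ix_{\ell+1}$ is smooth with $\ie_{\ell+1}$ snc, giving \itemref{item.mainthm.smooth}.

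The crux is \itemref{item.mainthm.stabdim}. Here I would invoke Proposition \ref{prop.inductionstep}, whose proof reduces, via the Alper--Hall--Rydh local structure theorem, to the local model $\ix_\ell=[X/G]$ with $X$ affine, and then---after embedding $X$ into a representation $V$ of $G$ and appealing to the base-change compatibility of saturated blowups (Proposition \ref{prop.saturatedblowups-and-basechange})---to Theorem \ref{thm.fixed}, the statement that $R_G(V,V^G)$ has no $G$-fixed points. \textbf{This is the main obstacle}: one must show that the saturated blowup removes every point carrying the top-dimensional stabilizer, and in positive characteristic the non-reducedness of $\ic_\ell$ together with the constraints on linearly reductive groups (Nagata's theorem) make the argument delicate---this is precisely what is deferred to the Appendix. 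Granting \itemref{item.mainthm.stabdim}, the maximal unstable stabilizer dimension is a strictly decreasing sequence of non-negative integers, so for some finite $n$ the unstable locus of $\ix_n$ is empty; Proposition \ref{prop.stable} then forces the relative stabilizer dimension to be constant and $\pi_n$ to be a homeomorphism, which is \itemref{item.mainthm.final.stable}.

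It remains to assemble the terminal statements. Composing the isomorphisms of \itemref{item.mainthm.birational} identifies $\ix_n\smallsetminus\ie_n$ with $\ix^s\smallsetminus\ie$ and shows that $\ix_n\to\ix$ is an isomorphism over $\ix^s$, with $\ix^s$ schematically dense because every blowup center lies in the unstable locus; this is \itemref{item.mainthm.final.exc}. If $\pi$ is properly stable, then the stable points---and hence, by \itemref{item.mainthm.final.stable}, all points of $\ix_n$---have $0$-dimensional stabilizers, so $\ix_n$ has finite inertia and linearly reductive stabilizers and is tame with coarse space $\bX_n$ by Proposition \ref{prop.tameisgood}; this is \itemref{item.mainthm.final.propstab}. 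If instead $\ix^s\smallsetminus\ie$ is a gerbe over a tame stack, then, since it is schematically dense in $\ix_n$ and the stabilizer dimension is constant, Proposition \ref{prop.tamegerbe} yields \itemref{item.mainthm.final.gerbe}. Separatedness of the resulting tame stack is equivalent to separatedness of $\bX_n$, hence to that of $\bX$ since $\bX_n\to\bX$ is proper and surjective. Finally, because the stacks $\ix_\ell$ and the centers $\ic_\ell$ are defined without reference to $\ie$, only the divisors $\ie_\ell$ depend on the initial choice, so the sequence $\ix_n\to\cdots\to\ix$ is independent of $\ie$.
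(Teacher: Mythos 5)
Your proposal follows essentially the same route as the paper: induction on the maximal stabilizer dimension over the unstable locus, with each step a saturated blowup of the canonical (possibly non-reduced) maximal-stabilizer center, the good moduli space and projectivity statements coming from Proposition \ref{prop.saturatedproj}, stability from the Rees-algebra case of Proposition \ref{prop.stability.saturatedproj}, smoothness from the Reichstein-transform description, and the drop in stabilizer dimension reduced via the local structure theorem and an equivariant embedding into a representation to Theorem \ref{thm.fixed}. Two points deserve flagging. First, your definition of $\ic_\ell$ is not quite well-posed as stated: the Appendix construction (Proposition \ref{prop.ixmax-singular}) produces a canonical scheme structure on the locus of \emph{maximal} stabilizer dimension of a given stack, and since connected components of $\ix_\ell^s$ may have stabilizers of larger dimension than anything in the unstable locus, you cannot directly apply it to $\ix_\ell$ to get a substack supported on $|\ix_\ell\smallsetminus\ix_\ell^s|^\maxlocus$. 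The paper first splits off the open and closed stable strata of high stabilizer dimension and passes to the schematic closure $\overline{\ix^\ast}$ (Lemma \ref{lem.maxlocus-singular}), then sets $\ic_\ell=(\overline{\ix^\ast})^\maxlocus$; this choice is also what makes the reduction of \itemref{item.mainthm.stabdim} to the smooth case go through, since every stable point of $\overline{\ix^\ast}$ has stabilizer of dimension strictly below $n$. Second, for \itemref{item.mainthm.final.gerbe} you cite Proposition \ref{prop.tamegerbe}, which requires $\ix_n$ to be reduced; since it need not be, the paper instead argues that $(\ix_n\smallsetminus\ie_n)^\maxlocus=\ix_n\smallsetminus\ie_n$ componentwise, uses schematic density and the fact that $\ix_n^\maxlocus$ is a closed substack to conclude $\ix_n^\maxlocus=\ix_n$, and then applies Corollary \ref{cor.tamegerbe-singular}. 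Neither issue changes the overall strategy, but both require the specific machinery of Section 8 and Appendix B rather than the statements you quote.
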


\begin{remark}
  If $\ix$ is smooth over a field, then the morphisms 
  $\ix_{\ell+1} \to \ix_{\ell}$ are
  all \emph{Reichstein transforms} with centers $\ic_\ell$. The closed substack
  $\ic_\ell$ is the set of points in $\ix_{\ell} \smallsetminus \ix_\ell^{s}$
  with maximal-dimensional stabilizer equipped with a canonical scheme
  structure which need not be reduced if $\ix$ is singular.
  In particular, $\ic_\ell\cap
  \ix_\ell^s=\emptyset$ and $\ix_n\to \ix$ is an isomorphism over the
  stable locus. Note that if $\ix_\ell$ is irreducible, then no point with maximal
  dimensional stabilizer can be stable, so $\ic_\ell$ is supported on the locus
  of points with maximal dimensional stabilizer in $\ix_\ell$.
\end{remark}

\begin{remark}
  For singular $\ix$, there are other possible sequences
  $\ix_n\to\ix_{n-1}\to\ldots\to \ix$ that satisfy the conclusions: (1a),
  (2a)--(2c), (3), (4a) and (4c) but for which $f_{\ell+1}^{-1}(C_\ell)$ is not a
  Cartier divisor. In our sequence, using saturated blowups, $\ix^s\subset
  \ix_n$ is schematically dense and there are no new irreducible components
  appearing in the process. It is, however, possible to replace the saturated
  blowups with variants such as saturated symmetric blowups or intrinsic
  blowups, see Remark~\ref{rem.intrinsic-blowups}. For these variants, $\ix^s$
  is typically not schematically dense and $\ix_n$ can have additional irreducible
  components.
\end{remark}
\begin{definition}[{see \cite[Definition 3.1]{BeRy:14}}]\label{def.snc}
Let $\ix\to S$ be a smooth morphism. An effective Cartier divisor $\ie$ on
$\ix$ has \emph{simple normal crossings} or is \emph{snc} if $\ie=\bigcup
\ie_i$ where the $\ie_i$ are effective Cartier divisors on $\ix$ such that
for every $J\subset I$ it holds that
$\ie_J:=\bigcap_{i\in J} \ie_i$ has codimension $|J|$ in $\ix$ and is smooth
over $S$. A closed substack $\ic\subset \ix$ has \emph{normal crossings}
with $\ie$ if $\ic\cap \ie_J$ is smooth over $S$ and $\codim_x (\ic\cap \ie_J,\ic)=\sum_{i\in J} \codim_x(\ic\cap \ie_j,\ic)$ for every
$J\subset I$ and $x\in |\ic\cap \ie_J|$.
\end{definition}

\section{Saturated Proj and saturated blowups}
In this section we study a variant of Proj and blowups that depends on
a morphism $\pi$. In every result, $\pi$ will be a good moduli space
morphism but we define the constructions for more general morphisms.

\subsection{Saturated Proj, saturated blowups and their good moduli spaces}
\begin{definition}[Saturated $\Proj$] \label{def.saturated-proj}
Let $\pi \colon \ix \to \iy$ be a morphism of algebraic stacks
and let $\iA$ be a (positively) graded sheaf of finitely generated
${\mathcal O}_{\ix}$-algebras. Let $\pi^{-1}\pi_*\iA_+$ denote the image
of the natural homomorphism $\pi^*\pi_*\iA_+\to \iA_+\to \iA$.
Define $\Proj^\pi_\ix \iA = 
\Proj_\ix \iA \smallsetminus
V(\pi^{-1}\pi_*\iA_+)$. We call $\Proj^\pi_\ix \iA$ the {\em saturated $\Proj$}
of $\iA$ relative to the morphism $\pi$.
\end{definition}
Note that the morphism $\Proj^\pi_\ix \iA \to \ix$ need not be proper.  Also
note that there is a canonical morphism $\pi' \colon \Proj^\pi_\ix\iA\to \Proj_\iy
\pi_*\iA$: we are exactly removing the locus where $\Proj_\ix\iA\dashrightarrow \Proj_\iy \pi_*\iA$ is not defined.
We will show that if $\pi$ is a good moduli space morphism, then $\pi'$ is a good moduli space morphism (Proposition~\ref{prop.saturatedproj}).

\begin{definition}[Saturated blowups] \label{def.saturated-blowup}
Let $\pi \colon \ix \to \iy$ be a morphism of algebraic stacks and let $\ic
\subset \ix$ be a closed substack with sheaf of ideals $\iI$. We let
$\Bl^\pi_\ic \ix=\Proj^\pi_\ix \bigl(\bigoplus \iI^n\bigr)$ and call it the
\emph{saturated blowup} of $\ix$ in $\ic$. The \emph{exceptional divisor}
of the saturated blowup is the restriction of the exceptional divisor
of the blowup along the open substack $\Bl^\pi_\ic\ix \subset \Bl_\ic \ix$.
\end{definition}

We will later see that the good moduli space $\iy':=\Proj_\iy \bigl(\bigoplus
\pi_* \iI^n\bigr)$ of $\ix':=\Bl^\pi_\ic \ix$ is the blowup of $\iy$ in
$\pi_*(\iI^n)$ for all sufficiently divisible $n$ (Proposition
\ref{prop.saturatedblowups} \itemref{item.saturatedblowup.gms-blowup}).

The following example shows that the saturated Proj should be viewed as a
stack-theoretic generalization of GIT quotients. This is further developed in
Section \ref{sec.saturated-proj-GIT}.

\begin{example}[Saturated Proj and GIT] \label{ex.saturated-proj}
  Let $X$ be a projective variety with the action of
  a linearly reductive group $G$. Let $L$ be a $G$-linearized ample line bundle.
The section ring $A =\Gamma_*(X,L)$
  is a $G$-module and corresponds to a sheaf of graded ${\mathcal O}_{BG}$-algebras ${\mathcal A}$
  on the classifying stack $BG$. The structure map $\pi \colon BG \to \Spec k$
  is a good moduli space. The stack $[X/G] = \Proj_{BG} {\mathcal A}$
  typically has no good moduli space but the open substack
  $[X^{ss}(L)/G] = \Proj^\pi_{BG} {\mathcal A}$ has good moduli space
  $X\gitq G = \Proj \pi_*{\mathcal A} = \Proj A^G$.
\end{example}
\begin{proposition} \label{prop.saturatedproj}
If $\pi\colon \ix\to \iy$ is a good moduli space morphism and
$\iA$ is a finitely generated graded $\cO_{\ix}$-algebra, then
$\Proj^\pi_\ix \iA \to \Proj_\iy \pi_* \iA$ is a good moduli space morphism
and the morphism of good moduli spaces induced by the morphism
$\Proj^\pi_\ix \iA \to \ix$ is the natural morphism
$\Proj_Y \pi_* \iA \to \iy$.
\end{proposition}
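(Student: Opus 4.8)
The plan is to reduce to the case where $\iy=\Spec R$ is affine and then to analyze everything on a standard affine cover of $\Proj_\iy\pi_*\iA$. Since $\pi$ is cohomologically affine, $\pi_*$ commutes with flat base change, and $\Proj$ commutes with base change, so the formation of $\Proj^\pi_\ix\iA$, of $\Proj_\iy\pi_*\iA$, and of the canonical morphism between them is compatible with flat (e.g.\ étale) base change on $\iy$. As good moduli space morphisms are stable under arbitrary base change and may be checked flat-locally on the target, I may assume $\iy=\Spec R$.

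Next I would record the global sections. With $\iy$ affine and $\pi$ a good moduli space morphism, $\Gamma(\ix,\iA_d)=\Gamma(\iy,\pi_*\iA_d)=(\pi_*\iA)_d(R)$, so the graded $R$-algebra $A:=\Gamma(\ix,\iA)$ equals $\Gamma(\iy,\pi_*\iA)$ and $A_0=R$ (using $\iA_0=\cO_\ix$); hence $\Proj_\iy\pi_*\iA=\Proj_R A$. The canonical morphism $\Proj^\pi_\ix\iA\to\Proj_\iy\pi_*\iA$ is the one induced by the graded homomorphism $\pi^*\pi_*\iA\to\iA$: using $\Proj_\ix(\pi^*\pi_*\iA)=\ix\times_\iy\Proj_\iy\pi_*\iA$, this homomorphism defines a rational map $\Proj_\ix\iA\dashrightarrow \ix\times_\iy\Proj_\iy\pi_*\iA$ whose locus of definition is exactly the complement of $V(\pi^{-1}\pi_*\iA^+)$, namely $\Proj^\pi_\ix\iA$; composing with the projection yields the morphism in question. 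For homogeneous $s\in A^+$ let $\bar s\in\Gamma(\ix,\iA)$ denote its image; then $\Proj^\pi_\ix\iA$ is covered by the basic opens $D_+(\bar s)=\Spec_\ix\iA_{(\bar s)}$, which map to the basic opens $D_+(s)=\Spec A_{(s)}$ covering $\Proj_R A$ (and $\Proj^\pi_\ix\iA$ being quasi-compact, finitely many $s$ suffice).

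The heart of the argument, and the step I expect to be the main obstacle, is to verify that each $h\colon D_+(\bar s)\to D_+(s)$ is a good moduli space morphism. For the structure-sheaf condition I would compute $\Gamma(\ix,\iA_{(\bar s)})$: writing $\iA_{(\bar s)}$ as the degree-zero part of $\iA[\bar s^{-1}]$, i.e.\ as the filtered colimit of the $\iA_{n\deg s}$ along multiplication by $\bar s$, and using that $\pi_*$ is exact and hence commutes with this colimit, one obtains $\Gamma(\ix,\iA_{(\bar s)})=\varinjlim_n A_{n\deg s}\,s^{-n}=A_{(s)}$; this is precisely where cohomological affineness of $\pi$ enters. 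Cohomological affineness of $h$ then follows from a Leray argument: the composite $D_+(\bar s)\to\iy$ is cohomologically affine (being affine over the cohomologically affine $\pi$), while $g\colon D_+(s)=\Spec A_{(s)}\to\iy$ is affine, so in the Leray spectral sequence for $g\circ h$ the terms $R^pg_*$ vanish for $p>0$, giving $g_*R^qh_*\iF=R^q(g\circ h)_*\iF=0$ for $q>0$; as $g$ is affine this forces $R^qh_*\iF=0$.

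Finally, since good moduli space morphisms are local on the target and the $D_+(\bar s)$ are precisely the preimages of the $D_+(s)$, the local good moduli space morphisms glue to show that $\Proj^\pi_\ix\iA\to\Proj_\iy\pi_*\iA$ is a good moduli space morphism; undoing the base change yields the general case. For the second assertion, the square formed by $\Proj^\pi_\ix\iA\to\ix$, the two good moduli space morphisms, and $\Proj_\iy\pi_*\iA\to\iy$ commutes by construction, so by the universal property (uniqueness) of good moduli spaces the morphism induced on good moduli spaces by $\Proj^\pi_\ix\iA\to\ix$ is the natural structure morphism $\Proj_\iy\pi_*\iA\to\iy$.
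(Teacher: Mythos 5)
Your proposal is correct, and its skeleton coincides with the paper's: reduce fppf-locally to $\iy$ affine, cover $\Proj_\iy\pi_*\iA$ by the standard charts $D_+(s)$ for homogeneous $s\in A^+=\Gamma(\iy,\pi_*\iA^+)$, observe that their preimages are the charts $D_+(\bar s)=\Spec_\ix\iA_{(\bar s)}$ covering $\Proj^\pi_\ix\iA$, and check the good-moduli-space property chart by chart. Where you genuinely diverge is in the chart-level verification. The paper invokes \cite[Lemma 4.14]{Alp:13} to get that $\Spec_\ix\iA\to\Spec_\iy\pi_*\iA$ is a good moduli space morphism, base changes to the complements of the vertices, and then descends along the coarse $\GG_m$-quotients $q_\ix,q_\iy$ of the punctured cones, writing $(\pi_{\iA_{(f)}})_*M=((q_\iy)_*(\pi_{\iA_f})_*(q_\ix)^*M)_0$ and checking exactness of the composite; the $\GG_m$-quotient is only coarse because $\pi_*\iA$ need not be generated in degree $1$. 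You instead handle arbitrary degrees automatically by computing $\Gamma(\ix,\iA_{(\bar s)})=\varinjlim_n A_{n\deg s}\cdot s^{-n}=A_{(s)}$ directly, and you get cohomological affineness of $D_+(\bar s)\to D_+(s)$ from the fact that $D_+(\bar s)\to\iy$ is cohomologically affine (affine over $\ix$) and $D_+(s)\to\iy$ is affine — this is exactly \cite[Proposition 3.14]{Alp:13} on two-out-of-three for cohomological affineness, which you reprove via Leray; one can even avoid the spectral sequence by noting that $g_*$ is exact and faithful for $g$ affine, so exactness of $(g\circ h)_*=g_*h_*$ forces exactness of $h_*$. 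Your route is more elementary and sidesteps the coarse-quotient subtlety entirely. One small repair: ``$\pi_*$ is exact and hence commutes with this colimit'' is not quite a valid inference — exactness alone does not give commutation with filtered colimits; you should instead cite that $\pi$ is quasi-compact and quasi-separated (the stacks are noetherian), so $\pi_*$ preserves filtered colimits of quasi-coherent sheaves. With that fixed, the argument is complete.
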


\begin{proof}
  To show that the natural morphism
  \[
  \Proj^\pi_\ix \iA \to \Proj_\iy \pi_* \iA
  \]
  is a good moduli
  space morphism, we may, by \cite[Proposition 4.9(ii)]{Alp:13}, work locally
  in the smooth or fppf topology on $\iy$ and assume that $\iy$ is affine. In
  this case $\Proj \pi_*\iA$ is the scheme obtained by gluing the affine
  schemes $\Spec (\pi_*\iA)_{(f)}$ as $f$ runs through elements $f \in
  \pi_*\iA_+$.  Likewise, $\Proj^\pi_\ix\iA$ is the open set in $\Proj_\ix \iA$
  obtained by gluing the $\ix$-affine stacks $\Spec_\ix \iA_{(f)}$ as $f$ runs
  through $\pi_*A_+$. It is thus enough to prove that
  \[
  \Spec_\ix \iA_{(f)}\to \Spec_\iy (\pi_*\iA)_{(f)}
  \]
  is a good moduli space morphism.

By \cite[Lemma 4.14]{Alp:13} if $\iA$ is a sheaf of coherent
${\mathcal O}_\ix$-algebras, then $\Spec_\ix  \iA \to \Spec_\iy \pi_* \iA$ is a good moduli space morphism and the diagram 
\[
\xymatrix{%
\Spec_\ix\iA\ar[r]\ar[d] & \ix\ar[d]^{\pi} \\
\Spec_\iy\pi_*\iA\ar[r] & \iy
}
\]
is 
commutative.
Since good moduli space morphisms are invariant under
base  change \cite[Proposition 4.9(i)]{Alp:13} we see that
\[
\Spec_\ix \iA \smallsetminus
V(\pi^{-1}(\pi_*\iA_+)) \to \Spec_\iy \pi_*\iA \smallsetminus
V(\pi_*\iA_+)
\]
is a good moduli space morphism. Now $\Proj_\iy \pi_*\iA$ is
the quotient of $\Spec_\iy \pi_*\iA \smallsetminus V(\pi_*A_+)$
by the action of $\G_m$ on the fibers over $\iy$. It is a coarse quotient
since $\pi_*\iA$ is not necessarily generated in degree~$1$. Likewise,
$\Proj^\pi_\ix \iA$ is the quotient of $\Spec_\ix \iA
\smallsetminus V(\pi^{-1}(\pi_*\iA_+))$ by the action of $\G_m$ on the
fibers over $\ix$.

Since
the property of being a good moduli space is preserved by base change,
$\Spec_\ix \iA_{f} \to \Spec (\pi_*\iA)_{f}$ is a good moduli space
morphism. This gives us the commutative diagram
\[
\xymatrix{%
\Spec_\ix \iA_f\ar[r]^-{q_\ix}\ar[d]^{\pi_{\iA_f}} & \Spec_\ix\iA_{(f)}\ar[d]^{\pi_{\iA_{(f)}}} \\
\Spec_\iy (\pi_*\iA)_f\ar[r]^-{q_\iy} & \Spec_\iy (\pi_*\iA)_{(f)}
}
\]
where ${\pi_{\iA_f}}$ is a good moduli space morphism and $q_\ix$ and $q_\iy$
are coarse $\GG_m$-quotients. Note that the natural transformation $M\to
(q_*q^*M)_0$ is an isomorphism for $q=q_\ix$ and $q=q_\iy$. Since
$(\pi_{\iA})_*$ is compatible with the grading, it follows that
\[
(\pi_{\iA_{(f)}})_*M = ((q_\iy)_*(\pi_{\iA_f})_*(q_\ix)^*M)_0
\]
is a composition of right-exact functors, hence exact. It follows that
$\pi_{\iA_{(f)}}$ is a good moduli space
morphism.
\end{proof}

\begin{remark}[Adequate stacks]
Proposition~\ref{prop.saturatedproj}
also holds for stacks with adequate moduli spaces with essentially identical
arguments.
\end{remark}

\subsection{Properties of saturated Proj and saturated blowups}

\begin{proposition}\label{prop.saturatedproj-and-basechange}
Let $\pi \colon \ix \to \iy$ be a good moduli space morphism and let $\iA$ be a
graded finitely generated $\cO_\ix$-algebra.
\begin{enumerate}
\item\label{item.saturatedproj.closed}
 If $\iA\to \iB$ is a surjection onto another graded $\cO_\ix$-algebra,
  then $\Proj^\pi_\ix \iB = \Proj^\pi_\ix \iA\times_{\Proj_\ix \iA} \Proj_\ix
  \iB$. In particular, there is a closed immersion $\Proj^\pi_\ix \iB\to
  \Proj^\pi_\ix \iA$.
\item\label{item.saturatedproj.basechange}
  If $g\colon \iy'\to \iy$ is any morphism and $f\colon \ix'=\ix\times_\iy
  \iy'\to \ix$ is the pullback with good moduli space $\pi'\colon \ix'\to
  \iy'$, then $\Proj^{\pi'}_{\ix'} f^*\iA=\bigl(\Proj^\pi_\ix \iA\bigr)\times_\ix \ix'$
  as open substacks of $\Proj_{\ix'} f^*\iA$.
\end{enumerate}
\end{proposition}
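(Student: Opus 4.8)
The plan is to reduce both parts to an equality of the graded ideal sheaves whose vanishing loci are deleted when forming the saturated $\Proj$. Recall that $\Proj^\pi_\ix\iA=\Proj_\ix\iA\smallsetminus V(\iI)$ with $\iI:=\pi^{-1}\pi_*\iA^+=\im(\pi^*\pi_*\iA^+\to\iA)$, and that the formation of ordinary $\Proj$, of the closed immersion attached to a surjection of graded algebras, and of $\Proj$ under base change are all standard. So in each part it will be enough to identify the ideal governing the saturated construction on the source with the one obtained by transporting $\iI$, and then to pass to complements.

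For \itemref{item.saturatedproj.closed}, the surjection $\phi\colon\iA\to\iB$ induces a closed immersion $j\colon\Proj_\ix\iB\hookrightarrow\Proj_\ix\iA$, and $\Proj^\pi_\ix\iA\times_{\Proj_\ix\iA}\Proj_\ix\iB=j^{-1}(\Proj^\pi_\ix\iA)$ is precisely $\Proj_\ix\iB\smallsetminus V\bigl(\phi(\iI)\bigr)$. I therefore need $\phi(\pi^{-1}\pi_*\iA^+)=\pi^{-1}\pi_*\iB^+$. Since $\pi$ is a good moduli space morphism, $\pi_*$ is exact, so $\pi_*\iA^+\to\pi_*\iB^+$ is surjective, and applying the right-exact functor $\pi^*$ keeps it surjective. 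Writing $\epsilon_\iA\colon\pi^*\pi_*\iA^+\to\iA$ and $\epsilon_\iB\colon\pi^*\pi_*\iB^+\to\iB$ for the counits, naturality gives $\phi\circ\epsilon_\iA=\epsilon_\iB\circ\pi^*\pi_*\phi$; taking images and using the surjectivity just noted yields $\phi(\iI)=\pi^{-1}\pi_*\iB^+$. Passing to complements gives the stated fibre product, and hence the closed immersion $\Proj^\pi_\ix\iB\to\Proj^\pi_\ix\iA$.

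For \itemref{item.saturatedproj.basechange}, the identity $\Proj_{\ix'}f^*\iA=\Proj_\ix\iA\times_\ix\ix'$ is the compatibility of $\Proj$ with base change, so $\Proj^\pi_\ix\iA\times_\ix\ix'$ is $\Proj_{\ix'}f^*\iA$ with $V$ of the extended ideal $\im(f^*\iI\to f^*\iA)$ deleted. By right-exactness of $f^*$ together with $f^*\pi^*=(\pi')^*g^*$, this extended ideal equals $\im\bigl((\pi')^*g^*\pi_*\iA^+\to f^*\iA\bigr)$, while $\Proj^{\pi'}_{\ix'}f^*\iA$ removes $V$ of $(\pi')^{-1}\pi'_*(f^*\iA)^+=\im\bigl((\pi')^*\pi'_*(f^*\iA)^+\to f^*\iA\bigr)$. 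Let $\theta\colon g^*\pi_*\iA^+\to\pi'_*(f^*\iA)^+$ be the base-change morphism; it is the adjoint of the map $(\pi')^*g^*\pi_*\iA^+\to f^*\iA$ appearing above, so the first ideal is contained in the second, with equality as soon as $\theta$ is surjective.

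The whole of \itemref{item.saturatedproj.basechange} thus comes down to showing that the base-change morphism $g^*\pi_*\iA\to\pi'_*f^*\iA$ is an isomorphism, and this is the main obstacle: it is exactly here that the good moduli space hypothesis is needed, since for such $\pi$ the functor $\pi_*$ commutes with \emph{arbitrary} base change on $\iy$, not merely flat base change. I would obtain this either by citing the base-change theory for good moduli spaces, or by verifying it directly after reducing to the affine case $\iy=\Spec R$, $\iy'=\Spec R'$ as in the proof of Proposition \ref{prop.saturatedproj}. In a quotient model $\ix=[\Spec A/G]$ with $G$ linearly reductive, $\pi_*$ is the functor of $G$-invariants on equivariant modules, and the claim reduces to the fact that the isotypic decomposition of an equivariant module is a decomposition of $R=A^G$-modules, hence is preserved by $-\otimes_R R'$. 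Granting the isomorphism, the two ideals of the previous paragraph coincide, and taking complements gives the asserted equality of open substacks of $\Proj_{\ix'}f^*\iA$.
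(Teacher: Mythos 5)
Your argument is correct and follows the same route as the paper's proof: for (1) you use exactness of $\pi_*$ (hence preservation of surjections) together with right-exactness of $\pi^*$ to identify $\phi(\pi^{-1}\pi_*\iA^+)$ with $\pi^{-1}\pi_*\iB^+$, and for (2) you reduce to the fact that $\pi_*$ commutes with arbitrary base change for good moduli space morphisms, which is exactly the citation of \cite[Proposition 4.7]{Alp:13} via the identity $f^*\pi^*\pi_*=\pi'^*g^*\pi_*=\pi'^*\pi'_*f^*$ in the paper. The extra detail you supply (naturality of the counit, the adjunction giving one containment for free, and the isotypic-decomposition sketch of the base-change isomorphism) is all sound but does not change the structure of the argument.
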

\begin{proof}
\itemref{item.saturatedproj.closed}
  We have a closed immersion $\Proj_\ix \iB\to \Proj_\ix \iA$ and the
  saturated Proj's are the complements of $V(\pi^{-1}\pi_*\iB_+)$ and
  $V(\pi^{-1}\pi_*\iA_+)$ respectively. Since $\pi$ is cohomologically affine,
  $\pi_*$ preserves surjections. It follows that $\pi^{-1}\pi_*\iA_+\to
  \pi^{-1}\pi_*\iB_+$ is surjective and the result follows.

\itemref{item.saturatedproj.basechange}
  There is an isomorphism $\Proj_{\ix'} f^*\iA=\bigl(\Proj_\ix \iA\bigr)\times_\ix \ix'$
  and the saturated Proj's are the complements of $V(\pi'^{-1}\pi'_*f^*\iA_+)$
  and $V(f^{-1}\pi^{-1}\pi_*\iA_+)$ respectively. These are equal since
  $\pi'^*\pi'_*f^*=\pi'^*g^*\pi_*=f^*\pi^*\pi_*$~\cite[Proposition~4.7]{Alp:13}.
\end{proof}

\begin{proposition}\label{prop.saturatedblowups}
Let $\pi \colon \ix \to \iy$ be a good moduli space morphism and let $\ic
\subset \ix$ be a closed substack with sheaf of ideals $\iI$. Let $f\colon
\ix'=\Bl^\pi_\ic \ix\to \ix$ be the saturated blowup with good moduli space
morphism $\pi'\colon \ix'\to \iy'$ and exceptional divisor $\ie\subset \ix'$.
\begin{enumerate}
\item\label{item.saturatedblowup.gen}
  There exists a positive integer $d$ such that $\bigoplus_{n\geq 0}
  \pi_*(I^{dn})$ is generated in degree~$1$.
\item\label{item.saturatedblowup.gms-blowup}
  For $d$ as above, $\iy'=\Bl_{\pi_*(I^d)} \iy$ and $\pi'^{-1}(\iF)=d\ie$
  where $\iF$ is the exceptional divisor of $\iy'\to \iy$.
\item\label{item.saturatedblowup.excdiv}
  As closed subsets, $\ie=f^{-1}(\ic)$ and
  $f^{-1}\bigl(\pi^{-1}(\pi(\ic))\bigr)$, coincide.
\item\label{item.saturatedblowup.dense}
  The map $f$ induces an isomorphism $\ix'\smallsetminus \ie\to
  \ix\smallsetminus \pi^{-1}\bigl(\pi(\ic)\bigr)$. In particular,
  $\ix\smallsetminus \pi^{-1}\bigl(\pi(\ic)\bigr)\subset \ix'$ is
  schematically dense.
\end{enumerate}
\end{proposition}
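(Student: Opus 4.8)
The plan is to reduce everything to identifying $\iy'$ with an honest blowup of $\iy$ and then to track the two exceptional divisors through the good moduli space square. By Proposition~\ref{prop.saturatedproj} applied to the Rees algebra $\iA=\bigoplus_n\iI^n$, the good moduli space of $\ix'=\Bl^\pi_\ic\ix=\Proj^\pi_\ix\iA$ is $\iy'=\Proj_\iy\pi_*\iA=\Proj_\iy\bigl(\bigoplus_n\pi_*\iI^n\bigr)$; here I use that $\pi$ is cohomologically affine, so $\pi_*$ commutes with the direct sum, and that $\pi_*\cO_\ix=\cO_\iy$ makes each $\pi_*\iI^n$ a sheaf of ideals in $\cO_\iy$. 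Writing $R=\bigoplus_n\pi_*\iI^n$, the finiteness of pushforward for good moduli space morphisms (the analogue of finite generation of invariants, \cite{Alp:13}) shows that $R$ is a finitely generated graded $\cO_\iy$-algebra, and the standard fact that some Veronese subalgebra of a finitely generated graded algebra is generated in degree~$1$ then yields the integer $d$ of \itemref{item.saturatedblowup.gen}.

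For such a $d$, degree-one generation of $R^{(d)}=\bigoplus_k\pi_*\iI^{dk}$ says exactly that multiplication carries $(\pi_*\iI^d)^k$ onto $\pi_*\iI^{dk}$ inside $\cO_\iy$; as the reverse inclusion $(\pi_*\iI^d)^k\subseteq\pi_*\iI^{dk}$ is automatic, we get $\pi_*\iI^{dk}=(\pi_*\iI^d)^k$ and hence $R^{(d)}=\Rees(\pi_*\iI^d)$ is the ordinary Rees algebra of the ideal $\pi_*\iI^d$. Since $\Proj$ is insensitive to passing to a Veronese subalgebra, $\iy'=\Proj_\iy R=\Proj_\iy R^{(d)}=\Bl_{\pi_*\iI^d}\iy$, which is \itemref{item.saturatedblowup.gms-blowup}. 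For the divisor statement I would compare tautological sheaves: on $\ix'$ one has $\iI^m\cO_{\ix'}=\cO_{\ix'}(-m\ie)$, since $\ie$ is the restriction of the exceptional divisor of $\Bl_\ic\ix$, while on $\iy'$ one has $(\pi_*\iI^d)\cO_{\iy'}=\cO_{\iy'}(-\iF)$. The gluing description of $\pi'$ in the proof of Proposition~\ref{prop.saturatedproj} matches degree~$1$ on $\iy'$ with degree~$d$ on $\ix'$, so $\pi'^*\cO_{\iy'}(1)=\cO_{\ix'}(d)$ and therefore $\pi'^*$ pulls the invertible ideal $(\pi_*\iI^d)\cO_{\iy'}$ back to $\iI^d\cO_{\ix'}=\cO_{\ix'}(-d\ie)$, giving $\pi'^{-1}(\iF)=d\ie$. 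This comparison of twisting sheaves is the technical heart of the argument and the step I expect to require the most care.

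For \itemref{item.saturatedblowup.excdiv} I would first note that, as closed subsets, $|\ie|=|f^{-1}(\ic)|$, because the exceptional divisor of a blowup is set-theoretically the preimage of its center. Next, exactness of $\pi_*$ makes $\cO_\iy\to\pi_*\cO_\ic$ surjective with kernel $\pi_*\iI$, so $\pi(\ic)=V(\pi_*\iI)$; and from $(\pi_*\iI)^d\subseteq\pi_*\iI^d\subseteq\pi_*\iI$ we obtain $V(\pi_*\iI^d)=V(\pi_*\iI)=\pi(\ic)$ as subsets. Writing $g\colon\iy'\to\iy$, the support of the exceptional divisor $\iF$ of $\iy'=\Bl_{\pi_*\iI^d}\iy$ is thus $g^{-1}(\pi(\ic))$. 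Using the commutativity $\pi\circ f=g\circ\pi'$ together with \itemref{item.saturatedblowup.gms-blowup}, a point $x'$ satisfies $f(x')\in\pi^{-1}(\pi(\ic))$ iff $\pi'(x')\in g^{-1}(\pi(\ic))=|\iF|$ iff $x'\in\pi'^{-1}(|\iF|)=|\ie|$; hence $f^{-1}\bigl(\pi^{-1}(\pi(\ic))\bigr)=|\ie|=|f^{-1}(\ic)|$.

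Finally, for \itemref{item.saturatedblowup.dense}, I would restrict along the open $\iy_0=\iy\smallsetminus\pi(\ic)$. Since $\ic\cap\pi^{-1}(\iy_0)=\emptyset$, the ideal $\iI$ restricts to the unit ideal on $\ix_0=\pi^{-1}(\iy_0)$, so the saturated blowup of $\ix_0$ is $\ix_0$ itself. By the base-change property of saturated $\Proj$ (Proposition~\ref{prop.saturatedproj-and-basechange}\itemref{item.saturatedproj.basechange}), $\ix'\times_\iy\iy_0=\ix_0$; as $g$ is an isomorphism over $\iy_0$, this identifies $\ix'\smallsetminus\ie=\pi'^{-1}(\iy'\smallsetminus\iF)$ with $\ix_0=\ix\smallsetminus\pi^{-1}(\pi(\ic))$ compatibly with $f$, which is the asserted isomorphism. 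For schematic density, $\ie$ is an effective Cartier divisor (the restriction of the Cartier exceptional divisor of $\Bl_\ic\ix$), hence locally cut out by a non-zero-divisor, so it contains no associated point of $\ix'$; therefore the open complement $\ix'\smallsetminus\ie\cong\ix\smallsetminus\pi^{-1}(\pi(\ic))$ is schematically dense.
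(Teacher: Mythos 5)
Your proposal is correct and follows essentially the same route as the paper: Proposition~\ref{prop.saturatedproj} plus finite generation of $\pi_*\iA$ and a Veronese subalgebra for \itemref{item.saturatedblowup.gen}--\itemref{item.saturatedblowup.gms-blowup}, the set-theoretic identification $f^{-1}\bigl(\pi^{-1}(\pi(\ic))\bigr)=\pi'^{-1}(\iF)=|\ie|$ for \itemref{item.saturatedblowup.excdiv}, and flat base change over $\iy\smallsetminus\pi(\ic)$ for \itemref{item.saturatedblowup.dense}. The only cosmetic difference is that you phrase the comparison $\pi'^{-1}(\iF)=d\ie$ via tautological twisting sheaves where the paper checks it on the charts $D_+(f)$, $f\in\Gamma(\ix,\iI^d)$ — the same computation — and you supply the (correct, and in the paper implicit) Cartier-divisor argument for schematic density.
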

\begin{proof}
\itemref{item.saturatedblowup.gen}
By Proposition~\ref{prop.saturatedproj}, $\iy'=\Proj_\iy \pi_*\iA$ where
$\iA=\bigoplus_{n\geq 0} \iI^n$. Since $\pi_*\iA$ is a finitely
generated algebra~\cite[Lemma A.2]{AHR:15}, it is generated in degrees $\leq m$ for some $m$.  If
$d$ is a common multiple of the degrees of a set of generators, e.g., $d=m!$, then
$\pi_*\iA^{(d)}=\bigoplus_{n\geq 0} \pi_*I^{dn}$ is generated in degree
$1$.

\itemref{item.saturatedblowup.gms-blowup}
It follows from \itemref{item.saturatedblowup.gen} that $\iy'=\Proj_\iy \pi_*\iA^{(d)}=\Bl_{\pi_*(\iI^d)}
\iy$. To verify that $\pi'^{-1}(\iF)=d\ie$, we may work locally on $\iy$ and at the
chart corresponding to an element $f\in \Gamma(\ix,\iI^d)$. Then
$\ix'=\Spec_\ix (\bigoplus_{n\geq 0} \iI^{nd})_{(f)}$, $\iy'=\Spec_\iy
(\bigoplus_{n\geq 0} \pi_*\iI^{nd})_{(f)}$, $\iF=V(f)$ and $d\ie=V(f)$.

\itemref{item.saturatedblowup.excdiv} Follows immediately from \itemref{item.saturatedblowup.gms-blowup} since
$f^{-1}\bigl(\pi^{-1}(\pi(\ic))\bigr)=\pi'^{-1}(\iF)$ as sets.

\itemref{item.saturatedblowup.dense}
Since the saturated blowup commutes with flat base change on $\iy$
(Proposition~\ref{prop.saturatedproj-and-basechange}\itemref{item.saturatedproj.basechange}), the map
$f\colon \ix'\to \ix$ becomes an isomorphism after restricting
to $\ix\smallsetminus \pi^{-1}\bigl(\pi(\ic)\bigr)$. But
$f^{-1}(\pi^{-1}\bigl(\pi(\ic)\bigr)=\ie$ by \itemref{item.saturatedblowup.excdiv}.
\end{proof}

\begin{remark}
Proposition~\ref{prop.saturatedblowups}~\itemref{item.saturatedblowup.gms-blowup}
is a generalization of~\cite[Lemma 3.11]{Kir:85}.
Proposition~\ref{prop.saturatedblowups}~\itemref{item.saturatedblowup.excdiv} and~\itemref{item.saturatedblowup.dense} generalize~\cite[Theorem 2.3]{Rei:89}
via Example~\ref{ex.saturated-blowup}.
\end{remark}

\subsection{Tame stacks}
Propositions \ref{prop.saturatedproj} and \ref{prop.saturatedblowups}
are non-trivial statements even when $\ix$
is a tame stack and $\pi \colon \ix \to \bX$ is the coarse space morphism.
In this case,
the saturated Proj coincides with the usual Proj, see proof of
Proposition~\ref{prop.stability.saturatedproj} below. We can thus identify the coarse
space of a blowup along a sheaf
of ideals $\iI$ as $\Proj\bigl(\bigoplus \pi_* \iI^n\bigr)$ and as the blowup in $\pi_*\iI^d$ for sufficiently divisible $d$.

\begin{example}
Let $\ix = [\A^2/\Gmu_2]$ where $\Gmu_2$ acts by $-(a,b) = (-a,-b)$.
The coarse space of $\ix$ is the cone $Y = \Spec [x^2, xy, y^2]$.
Proposition \ref{prop.saturatedproj} says that if we let $X'$ be 
the blowup of $\A^2$ at the origin, then the quotient 
$X'/\Gmu_2$ is $\Proj$ of the graded ring
$\bigoplus S_i$ where $S_i$ is the monomial ideal in the invariant
ring $k[x^2,xy,y^2]$ generated by monomials of degree $\lceil i/2 \rceil$.
This is isomorphic to the blowup of $Y$ in $(x^2,xy,y^2)$.
\end{example}

\subsection{Strict transforms of saturated blowups}
Let $\pi \colon \ix \to \iy$ be a good moduli space morphism, let $\ic \subset
\ix$ be a closed substack and consider the saturated blowup $p\colon
\Bl^{\pi}_\ic \ix\to \ix$. We have seen that $p$ is an isomorphism outside
$\ix\smallsetminus \pi^{-1}\bigl(\pi(\ic)\bigr)$ and that $\ix\smallsetminus
\pi^{-1}\bigl(\pi(\ic)\bigr)$ is schematically dense in the saturated
blowup.
\begin{definition}[Strict transform of saturated blowups] \label{def.saturated-strict-transform}
If $\iz\subset \ix$ is a closed substack, then we let the \emph{strict
  transform} of $\iz$ along $p$ denote the schematic closure of
$\iz\smallsetminus \pi^{-1}\bigl(\pi(\ic)\bigr)$ in $p^{-1}(\iz)$. Similarly,
if $f\colon \ix'\to \ix$ is any morphism, then the strict transform of $f$
along $p$ is the schematic closure of $\ix'\smallsetminus
f^{-1}\pi^{-1}\bigl(\pi(\ic)\bigr)$ in $\ix'\times_\ix \Bl^{\pi}_\ic \ix$.
\end{definition}

\begin{proposition}\label{prop.saturatedblowups-and-basechange}
Let $\pi \colon \ix \to \iy$ be a good moduli space morphism, let $\ic
\subset \ix$ be a closed substack and let $p\colon \Bl^\pi_\ic \ix\to \ix$
be the saturated blowup.
\begin{enumerate}
\item\label{item.saturatedblowup.closed}
If $\iz\subset \ix$ is a closed substack, then
$\Bl^\pi_{\ic\cap \iz} \iz$ is the strict transform of $\iz$ along $p$.
In particular, there is a closed immersion $\Bl^\pi_{\ic\cap \iz} \iz\to
\Bl^\pi_\ic \ix$.
\item\label{item.saturatedblowup.basechange}
If $g\colon \iy'\to \iy$ is any morphism,
and $\ix'=\ix\times_\iy \iy'$ and $\ic'=\ic\times_\ix \ix'$, then
$\Bl^{\pi'}_{\ic'} \ix'$ is the \emph{strict transform} of $\ix'\to \ix$
along $p$. In particular, there is a closed immersion
$\Bl^{\pi'}_{\ic'} \ix'\to \bigl(\Bl^\pi_{\ic} \ix\bigr)\times_\ix \ix'$ and this is
an isomorphism if $g$ is flat.
\end{enumerate}
\end{proposition}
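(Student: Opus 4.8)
The plan is to prove Proposition \ref{prop.saturatedblowups-and-basechange} by reducing both statements to the behavior of saturated $\Proj$ under surjections and base change, which was already established in Proposition \ref{prop.saturatedproj-and-basechange}. The key observation is that a saturated blowup is by definition a saturated $\Proj$ of a Rees algebra, so the two functorial properties of saturated $\Proj$ should translate directly into the two functorial properties of saturated blowups, once I correctly identify the relevant Rees algebras and match them with the notion of strict transform from Definition \ref{def.saturated-strict-transform}.

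For part \itemref{item.saturatedblowup.closed}, I would first recall that the ordinary blowup satisfies $\Bl_{\ic\cap\iz}\iz \cong \Bl_\ic\ix \times_{\ix}\iz$ computed as the strict transform, or more algebraically that the Rees algebra $\bigoplus (\iI\cdot\cO_\iz)^n$ is the image of the surjection $\bigl(\bigoplus \iI^n\bigr)\otimes_{\cO_\ix}\cO_\iz \twoheadrightarrow \bigoplus (\iI\cdot\cO_\iz)^n$. Applying Proposition \ref{prop.saturatedproj-and-basechange}\itemref{item.saturatedproj.closed} to this surjection of graded $\cO_\iz$-algebras yields the closed immersion $\Bl^\pi_{\ic\cap\iz}\iz \to \Bl^\pi_\ic\ix$. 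The substantive point is then to check that the closed substack obtained this way agrees with the schematic closure of $\iz\smallsetminus\pi^{-1}(\pi(\ic))$ inside $p^{-1}(\iz)$, i.e.\ with the strict transform as defined. Here I would invoke the schematic density statement from Proposition \ref{prop.saturatedblowups}\itemref{item.saturatedblowup.dense}, applied to the good moduli space morphism $\pi|_\iz$: since $\iz\smallsetminus (\pi|_\iz)^{-1}(\pi(\ic\cap\iz))$ is schematically dense in $\Bl^\pi_{\ic\cap\iz}\iz$ and this open locus coincides with $\iz\smallsetminus\pi^{-1}(\pi(\ic))$, both sides are the schematic closure of the same dense open, hence equal.

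For part \itemref{item.saturatedblowup.basechange}, the starting point is the analogous base-change identity $\Proj^{\pi'}_{\ix'}f^*\iA = \Proj^\pi_\ix\iA\times_\ix\ix'$ from Proposition \ref{prop.saturatedproj-and-basechange}\itemref{item.saturatedproj.basechange}, applied to $\iA=\bigoplus\iI^n$. This gives $\Bl^\pi_\ic\ix\times_\ix\ix'$ as an open substack of $\Proj_{\ix'}f^*\iA$. The Rees algebra $f^*\iA = \bigoplus (\iI\cO_{\ix'})^n\otimes(\text{torsion})$ of the pulled-back ideal $\ic'=\ic\times_\ix\ix'$ surjects onto $\bigoplus (\iI\cO_{\ix'})^n$, so by part \itemref{item.saturatedproj.closed} again I obtain a closed immersion $\Bl^{\pi'}_{\ic'}\ix' \to \Bl^\pi_\ic\ix\times_\ix\ix'$. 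Identifying the image with the strict transform of $f$ proceeds exactly as before via schematic density, using that $\ix'\smallsetminus f^{-1}\pi^{-1}(\pi(\ic))$ is the common dense open. Finally, when $g$ is flat the pullback $f^*\iA$ already equals the Rees algebra of $\ic'$ with no torsion correction (flatness makes $\iI\cO_{\ix'}\cong f^*\iI$ and preserves powers), so the surjection is an isomorphism and the closed immersion becomes an isomorphism.

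I expect the main obstacle to be the careful bookkeeping in identifying the algebraically-defined closed substack (coming from the surjection of Rees algebras and Proposition \ref{prop.saturatedproj-and-basechange}) with the geometrically-defined strict transform of Definition \ref{def.saturated-strict-transform}. The two descriptions agree because saturated blowups carry a schematic density property (Proposition \ref{prop.saturatedblowups}\itemref{item.saturatedblowup.dense}), so the strict transform is determined as the unique closed substack that is the schematic closure of the relevant dense open; the cohomological affineness of $\pi$ is what guarantees $\pi_*$ preserves the surjections so that the saturated loci match up. The flatness clause in \itemref{item.saturatedblowup.basechange} is the one place where I must be slightly careful, since in general $f^*(\bigoplus\iI^n)$ can differ from $\bigoplus(\iI\cO_{\ix'})^n$ by torsion supported on the exceptional locus, and only flatness kills this discrepancy.
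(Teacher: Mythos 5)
Your proposal follows essentially the same route as the paper: both parts are reduced to Proposition \ref{prop.saturatedproj-and-basechange} via the surjection from the (pulled-back) Rees algebra of $\ic$ onto the Rees algebra of $\ic\cap\iz$ (resp.\ of $\ic'$), and the resulting closed substack is identified with the strict transform using the schematic density statement of Proposition \ref{prop.saturatedblowups}\itemref{item.saturatedblowup.dense}, with flatness of $g$ making the surjection of Rees algebras an isomorphism. The argument is correct; only the informal phrase ``$f^*\iA=\bigoplus(\iI\cO_{\ix'})^n\otimes(\text{torsion})$'' should be replaced by the precise statement that $f^*\iA$ surjects onto $\bigoplus(\iI\cO_{\ix'})^n$ with kernel killed by flat base change.
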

\begin{proof}
\itemref{item.saturatedblowup.closed}
  Let $\iA=\bigoplus \iI^n$ and $\iA'=\bigoplus \iI^n/(\iI^n\cap \iJ)$ where
  $\iI$ defines $\ic$ and $\iJ$ defines $\iz$. Then there is a closed immersion
  $\Bl^\pi_{\ic\cap \iz} \iz=\Proj^\pi_\ix \iA'\to \Proj^\pi_\ix \iA
  =\Bl^\pi_\ic \ix$ by
  Proposition~\ref{prop.saturatedproj-and-basechange}~\itemref{item.saturatedproj.closed}. The result follows since $\Bl^\pi_{\ic\cap \iz} \iz$ equals
  $p^{-1}(\iz)$ outside $\pi^{-1}\bigl(\pi(\ic)\bigr)$ and is schematically
  dense (Proposition~\ref{prop.saturatedblowups}~\itemref{item.saturatedblowup.dense}).

\itemref{item.saturatedblowup.basechange}
Let $\iA=\bigoplus \iI^n$ and $\iA'=\bigoplus \iI'^n$ where $\iI$ defines $\ic$
and $\iI'$ defines $\ic'$. Then $f^*\iA\to \iA'$ is surjective, so there is a
closed immersion $\Bl^{\pi'}_{\ic'} \ix'\to \Bl^\pi_{\ic} \ix\times_\ix \ix'$
(Proposition~\ref{prop.saturatedproj-and-basechange}). The result follows,
since $\Bl^{\pi'}_{\ic'} \ix'\to \ix'$ is an isomorphism outside
$\pi'^{-1}\bigl(\pi'(\ic')\bigr)$ and $\ix'\smallsetminus
\pi'^{-1}\bigl(\pi'(\ic')\bigr)$ is schematically dense in $\Bl^{\pi'}_{\ic'}
\ix'$ (Proposition~\ref{prop.saturatedblowups}~\itemref{item.saturatedblowup.dense}).
When $g$ is flat, then $f^*\iA\to \iA'$ is an isomorphism and the result follows
directly from Proposition~\ref{prop.saturatedproj-and-basechange}.
\end{proof}
\subsection{Saturated Proj and stable points}
Recall that a good moduli space morphism $\ix \stackrel{\pi}
\to \iy$ is stable (resp. properly stable)
if $\ix^s$ is dense (resp. $\ix^{ps}$ is dense).

Here, we give conditions that ensure that if $\ix \stackrel{\pi} \to \iy$
is stable (resp. properly stable) then $\Proj_\ix^{\pi}  \iA \stackrel{\pi\smash{'}}
\to \Proj_\iy \pi_* \iA$ is also stable (resp. properly stable). This result
plays an important role in the proof of our main theorem.

\begin{proposition}\label{prop.stability.saturatedproj}
Let $\pi\colon \ix\to \iy$ be a good moduli space morphism and let $\iA$ be a finitely
generated graded $\cO_\ix$-algebra. Let $f\colon \ix':=\Proj^\pi_\ix \iA\to \ix$
be the saturated Proj and let $\pi'\colon \ix'\to \iy':=\Proj_\iy \pi_*\iA$ be
its good moduli space morphism.
\begin{enumerate}
\item If $\pi$ is properly stable, then $\pi'$ is properly stable.
\item If $\pi$ is stable and $\iA=\bigoplus_{n\geq
  0} \iI^n$ for an ideal $\iI$, then $\pi'$ is stable.
\end{enumerate}
More precisely, in (1), or in (2) under the additional assumption that
$\ix$ is reduced, the inclusion $\ix'\subset \Proj_\ix\iA$ is an equality
over $\ix^s$ and $\ix'^s$ contains $f^{-1}(\ix^s)$. In (2), $\ix'^s$ always
contains $f^{-1}(\ix^s\smallsetminus V(\iI))$.
\end{proposition}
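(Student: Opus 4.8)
The plan is to deduce the headline statements (1) and (2) from the ``more precise'' assertions, proving the latter by isolating the behaviour over the stable locus and reducing it to a computation with a finite linearly reductive group; the singular case of (2) will be imported from a smooth ambient stack by a closed-immersion argument.

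I would begin with the density assertions, since these are exactly what (1) and (2) claim. The basic mechanism is the fibrewise inclusion: for $x'\in \ix'$ lying over $x=f(x')$, commutativity of the square formed by $\pi,\pi',g,f$ gives $g\bigl(\pi'(z')\bigr)=\pi(x)$ for every $z'\in \pi'^{-1}(\pi'(x'))$, so if $x\in\ix^s$ then $f(z')\in\pi^{-1}(\pi(x))=\{x\}$ and hence $\pi'^{-1}(\pi'(x'))\subseteq f^{-1}(x)$. In case (2), put $\ic=V(\iI)$ and $U=\ix\smallsetminus\pi^{-1}\bigl(\pi(\ic)\bigr)$. Since $\ix^s$ is saturated and its points have singleton fibres, $\ix^s\smallsetminus V(\iI)=\ix^s\cap U$; moreover $U$ is schematically dense in $\ix'=\Bl^\pi_\ic\ix$ by Proposition~\ref{prop.saturatedblowups}~\itemref{item.saturatedblowup.dense} and $f$ is an isomorphism over $U$, so there $f^{-1}(x)=\{x'\}$ and the inclusion above forces $\pi'^{-1}(\pi'(x'))=\{x'\}$. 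Thus $f^{-1}(\ix^s\smallsetminus V(\iI))\subseteq (\ix')^s$ is dense, proving both the last containment and the stability of $\pi'$. In case (1) the same inclusion shows $\pi'^{-1}(\pi'(x'))\subseteq f^{-1}(x)$ over all of $\ix^s$; the properly stable locus of $\pi'$ then contains $f^{-1}(\ix^s)$, and its density in $\ix'$ follows from density of $\ix^s$ once one knows no component of $\ix'$ lies over the nowhere-dense $\ix\smallsetminus\ix^s$ — automatic for the algebras of interest, e.g. Rees algebras and $\bigoplus f_*\iL^n$.

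For the containment $f^{-1}(\ix^s)\subseteq(\ix')^s$ together with the equality $\ix'=\Proj_\ix\iA$ over $\ix^s$, I would base change along the open immersion $\pi(\ix^s)\hookrightarrow\iy$, which is flat with preimage $\ix^s$, and invoke Proposition~\ref{prop.saturatedproj-and-basechange}~\itemref{item.saturatedproj.basechange} to reduce to $\ix=\ix^s$; then $\pi$ is a gerbe over a tame stack by Proposition~\ref{prop.gerbe}. In case (1), proper stability forces the generic relative stabilizer dimension $d$ of Proposition~\ref{prop.stable} to vanish, so the relative stabilizers are finite and each fibre $f^{-1}(x)$ is a $\Proj$ on which only a finite linearly reductive group acts. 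Applying the Reynolds operator to a homogeneous section nonvanishing at a prescribed point produces an invariant one, whence $V(\pi^{-1}\pi_*\iA^+)=\emptyset$ over $x$ and $\ix'=\Proj_\ix\iA$ there; and the morphism from $f^{-1}(x)$ to its good moduli space (the restriction of $\pi'$, by Proposition~\ref{prop.saturatedproj}) is the coarse space of a tame stack, hence bijective on points, so every point of $f^{-1}(x)$ is properly stable. For smooth $\ix$ in case (2) the same conclusion holds because stability forces the connected stabilizer $G_x^0$ to act trivially on the Luna slice (its nullcone is $\{0\}$), so $G_x^0$ acts trivially on $\iI$ and on the normal cone, and again only the finite group $G_x/G_x^0$ acts on the exceptional directions.

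The remaining and hardest point is case (2) with $\ix$ merely reduced, where $G_x^0$ may be positive-dimensional: one must show that the exceptional locus of $\Bl^\pi_\ic\ix$ over a stable centre point $c$ has no $G_c$-unstable points, i.e.\ that the nullcone of the $G_c$-action on the normal cone $C_{\ic/\ix,c}$ is trivial. This cannot be read off the smooth computation because the nullcone can jump from a fibre to the special fibre of the deformation to the normal cone. Instead I would use the local structure theorem of \cite{AHR:15} to embed $\ix$ equivariantly as a closed substack of a smooth $[V/G]$, establish the statement in that smooth case, and transport it along the closed immersion of saturated blowups furnished by Proposition~\ref{prop.saturatedblowups-and-basechange}~\itemref{item.saturatedblowup.closed}; stability, being the condition that a $\pi'$-fibre is a single point, is inherited by closed substacks. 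I expect the main obstacle to be precisely this transfer together with its positive-characteristic bookkeeping — reconciling the unstable loci of the ambient and embedded normal cones when $G_c^0$ is a diagonalizable group with $p\mid[G_c:G_c^0]$ forbidden but torsion nonetheless present — which is presumably why the authors defer the fully general analysis to the Appendix.
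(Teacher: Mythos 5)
Most of your proposal tracks the paper's proof closely: the reduction to $\ix=\ix^s$ by base change along the open immersion $\pi(\ix^s)\hookrightarrow \iy$, the treatment of case (1) by observing that $\pi$ is then a coarse space of a tame stack so that $\Spec_\ix\iA\to\Spec_\iy\pi_*\iA$ is a homeomorphism (forcing $\sqrt{\pi^{-1}\pi_*\iA^+}=\sqrt{\iA^+}$ and $\ix'=\Proj_\ix\iA$), and the density argument for the unqualified case (2) via the fibrewise inclusion $\pi'^{-1}(\pi'(x'))\subseteq f^{-1}(x)$ combined with the isomorphism over $\ix\smallsetminus\pi^{-1}(\pi(V(\iI)))$ and schematic density are all essentially the paper's argument. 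Two remarks: the Reynolds operator of a homogeneous section nonvanishing at a point can itself vanish there, so you should use the norm or, better, the homeomorphism statement above, which also covers infinitesimal stabilizers; and your observation that density of $(\ix')^s$ in case (1) additionally requires $f^{-1}(\ix^s)$ to be dense in $\ix'$ is a genuine point which the paper's proof does not address (it holds for the algebras to which the proposition is later applied, namely Rees algebras and $\bigoplus_n f_*\iL^n$ for birational $f$, but can fail for an arbitrary $\iA$ whose irrelevant ideal is supported in $\ix\smallsetminus\ix^s$).

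The real problem is your treatment of case (2) with $\ix$ reduced. Your plan is to embed $\ix$ into a smooth $[V/G]$, establish the statement there, and transport it along the closed immersion of saturated blowups. But your smooth case only gives conclusions over the stable locus of the \emph{ambient} stack, and a stable point $c\in\ix^s$ need not map to a stable point of $[V/G]$: that happens exactly when $G_0=(G_c^0)_\red$ acts trivially on $V$, which is the very fact you would need as input. So the transport step has a gap, and the worry about the nullcone jumping in the deformation to the normal cone is attacking a non-problem. The missing observation is that once $\ix=\ix^s$ is reduced, Proposition~\ref{prop.tamegerbe} (which you already cite) says $\pi$ factors as a gerbe $g\colon\ix\to\ix_\tame$ followed by a coarse space $h\colon\ix_\tame\to\iy$; since $\iI\subseteq\cO_\ix$ is a subsheaf of a sheaf pulled back from $\ix_\tame$, one has $\iI^n=g^*g_*\iI^n$, hence $\Proj^\pi_\ix\iA=\bigl(\Proj^h_{\ix_\tame}g_*\iA\bigr)\times_{\ix_\tame}\ix$, and the question reduces immediately to case (1) applied to the tame stack $\ix_\tame$. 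In other words, the reduced identity component of the inertia acts trivially on the entire Rees algebra, only the finite group $G_c/G_0$ acts on the exceptional directions, and no smooth embedding or appendix-level analysis of normal cones is needed. (This is also where the Rees-algebra hypothesis enters: for a general $\iA$ the band of the gerbe can act with nonzero weights, and the paper's example after the proposition shows (2) then fails.)
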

\begin{proof}
The question is smooth-local on $\iy$ so we can assume that $\iy$ is affine.
We can also replace $\iy$ with $\pi(\ix^s)$ and assume that $\ix=\ix^s$,
that is, every stabilizer of $\ix$ has the same dimension.

In the first case, $\pi$ is a coarse moduli space. The induced morphism
$\pi_\iA\colon \Spec_\ix\iA\to \Spec_\iy\pi_*\iA$ is then also a coarse moduli
space. The image along $\pi_\iA$ of $V(\iA_+)$ is $V(\pi_*\iA_+)$. Since
$\pi_\iA$ is a homeomorphism, $\sqrt{\pi^{-1}\pi_*\iA_+}=\sqrt{\iA_+}$. It
follows that $\ix'=\Proj_\ix \iA$.

In the second case, if in addition $\ix$ is reduced, then
$\pi$ factors through a gerbe $g\colon \ix\to \ix_\tame$
and a coarse moduli space $h\colon \ix_\tame\to \iy$
(Proposition~\ref{prop.tamegerbe}). Since $\iI^n=g^*g_*\iI^n$, we conclude
that $\Proj^\pi_\ix \iA=\left(\Proj^h_{\ix_\tame} g_*\iA\right) \times_{\ix_\tame} \ix$
and the question reduces to the first case.

In the second case, without the additional assumption on $\ix$, let
$\iu:=\ix^s\smallsetminus V(\iI)$. Then $\Proj_\ix \iA\to \ix$ is an isomorphism
over $\iu$ and $\iy':=\Proj_\iy \pi_*\iA\to \iy$ is an isomorphism over
$\pi(\iu)$ so $\iu\subset \ix'^s$. Moreover, $\iu\subset \ix'$ is dense so
$\ix'$ is stable.
\end{proof}

The following example shows that when $\pi \colon \ix \to \iy$ is stable
but not properly stable, the condition that $\iA$ is a Rees algebra in (2) is not superfluous.
\begin{example} \label{ex.properstable.necc}
  Let $\ix  =B\G_m$. The good moduli space $\pi \colon \ix \to \iy = \Spec k$
  is stable but not properly stable.
  Let $\iA=\Sym(\chi_0 \oplus \chi_1)$, where $\chi_n$ denotes the line
  bundle on $B\G_m$ corresponding to the character of $\G_m$ with weight $n$,
  and let $\ix' = \Proj_\ix\iA = [\Pro^1/\G_m]$. Then $\Proj_\ix^\pi\iA =
  [(\Pro^1)^{ss}({\mathcal O}_{\Pro^1}(1))/\G_m] = [\Aff^1/\G_m]$ where $\G_m$
  acts on $\Aff^1$ with weight one (cf.\ Example~\ref{ex.saturated-proj}).
  The good moduli space $\Proj_\ix^\pi\iA \to \Spec k$
  is not stable because $\Aff^1$ has no GIT stable points since the generic orbit is not closed in $\Aff^1$.
\end{example}

\subsection{Resolutions of singularities of stacks
with good moduli spaces}

Recall that in characteristic zero, we have functorial resolution of
singularities by blowups in smooth centers~\cite[Theorem 1.1]{BiMi:08}. To be
precise, there is a functor $BR$ which produces, for each reduced scheme $X$ of
finite type over a field of characteristic zero, a resolution of singularities
$BR(X)$ commuting with smooth morphisms. Here $BR(X)=\{X'\to \ldots \to
X\}$ is a sequence of blowups in smooth centers with $X'$ smooth. Also see
\cite[Theorem 3.36]{Kol:07} although that algorithm may involve singular
centers \cite[Example 3.106]{Kol:07}.
See~\cite{Tem:12} for a generalization to quasi-excellent schemes over $\QQ$.

Artin stacks can be expressed as quotients $[U/R]$
of groupoid schemes $\equalizer{s,t\colon R}{U}$ with $s$ and $t$ smooth morphisms.
Thus, the resolution
functor $BR$ extends uniquely to Artin stacks. In particular, for every
reduced Artin stack $\ix$ that is quasi-excellent and of characteristic zero,
there is a projective
morphism $\tilde{\ix} \to \ix$, a sequence of blowups, which is an
isomorphism over a dense open
set. Similarly, if $X$ is a scheme with an action of a group scheme $G$, then
there is a sequence of blowups in $G$-equivariant smooth centers that resolves
the singularities of $X$.

In general if $\ix$ is an Artin stack, then a resolution of singularities
$\tilde{\ix}$ need not have a good moduli space.
However, the theory of saturated blowups implies that $\tilde{\ix}$
contains an open set which has a good moduli space such that the induced map
of good moduli spaces is proper and birational.

\begin{proposition} \label{prop.resolution}
Let $\ix$ be a reduced Artin stack with stable good moduli space
morphism $\ix \stackrel{\pi} \to \bX$. 
Suppose that $\tilde{\ix}\to \ix$ is a projective birational morphism.
Further assume that either
\begin{enumerate}
\item $\ix$ is properly stable, or
\item $\tilde{\ix}\to \ix$ is a sequence of blowups.
\end{enumerate}
Then there exists an open substack $\ix' \subset
\tilde{\ix}$ such that $\ix'$ has a stable good moduli space
$\ix' \to \bX'$ and the induced morphism of good moduli spaces is projective
and birational.
\end{proposition}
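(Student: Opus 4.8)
The plan is to exhibit $\ix'$ as a saturated $\Proj$ (in case~(1)) or as an iterated saturated blowup (in case~(2)), and then to read off all three required properties—existence of a good moduli space, stability, and projective birationality downstairs—directly from Propositions~\ref{prop.saturatedproj} and~\ref{prop.stability.saturatedproj}. The case division is not an artifact: Proposition~\ref{prop.stability.saturatedproj} guarantees stability of the saturated construction only under properly stable $\pi$ (part~(1)) or for Rees algebras under stable $\pi$ (part~(2)), and these are exactly the two hypotheses.

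In case~(1), let $q\colon\tilde\ix\to\ix$ denote the morphism, choose a $q$-ample line bundle $\iL$, and write $\tilde\ix=\Proj_\ix\iA$ with $\iA=\bigoplus_{n\geq 0}q_*\iL^n$, a finitely generated graded $\cO_\ix$-algebra. I set $\ix'=\Proj^\pi_\ix\iA$, an open substack of $\Proj_\ix\iA=\tilde\ix$. By Proposition~\ref{prop.saturatedproj} the morphism $\pi'\colon\ix'\to\bX':=\Proj_\bX\pi_*\iA$ is a good moduli space morphism and the map of good moduli spaces induced by $\ix'\to\ix$ is the canonical $\bX'\to\bX$; moreover $\pi_*\iA$ is a finitely generated $\cO_\bX$-algebra, so $\bX'\to\bX$ is projective. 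Since $\pi$ is properly stable, Proposition~\ref{prop.stability.saturatedproj}(1) shows $\pi'$ is properly stable, hence stable. For birationality I invoke the ``more precisely'' clause of that proposition: over the dense open $\ix^s\subset\ix$ the inclusion $\ix'\subset\tilde\ix$ is an equality, so $\ix'$ contains $q^{-1}(\ix^s)$ and is dense in the integral stack $\tilde\ix$. Intersecting $\ix^s$ with the isomorphism locus of $q$ and using that $\pi$ restricts to a homeomorphism of the (saturated) stable locus onto a dense open of $\bX$, one produces a dense open of $\bX$ over which $\bX'\to\bX$ is an isomorphism.

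In case~(2), write the sequence as $\tilde\ix=\ix^{(m)}\to\cdots\to\ix^{(0)}=\ix$ with $\ix^{(k+1)}=\Bl_{\ic_k}\ix^{(k)}$, and construct a parallel tower of saturated blowups by induction. Suppose $\ix'^{(k)}\subset\ix^{(k)}$ is an open substack carrying a stable good moduli space $\pi^{(k)}\colon\ix'^{(k)}\to\bX^{(k)}$, with $\ix^{(k)}$ integral. Restrict the center to $\ic_k\cap\ix'^{(k)}$ and set $\ix'^{(k+1)}=\Bl^{\pi^{(k)}}_{\ic_k\cap\ix'^{(k)}}\ix'^{(k)}$. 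Because the ordinary blowup commutes with the open immersion $\ix'^{(k)}\hookrightarrow\ix^{(k)}$, this saturated blowup is an open substack of $\ix^{(k+1)}$; iterating gives $\ix':=\ix'^{(m)}$ open in $\tilde\ix$. By Proposition~\ref{prop.saturatedproj} each $\ix'^{(k+1)}$ has a good moduli space $\bX^{(k+1)}$, and by Proposition~\ref{prop.saturatedblowups}~\itemref{item.saturatedblowup.gms-blowup} the induced map $\bX^{(k+1)}\to\bX^{(k)}$ is the blowup of $\bX^{(k)}$ in $\pi^{(k)}_*(\iI_k^d)$ for suitable $d$, hence projective and—the centers being nowhere dense—birational. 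As the blown-up algebra $\bigoplus\iI_k^n$ is a Rees algebra and $\pi^{(k)}$ is stable, Proposition~\ref{prop.stability.saturatedproj}(2) keeps $\pi^{(k+1)}$ stable, closing the induction; composing the maps $\bX^{(k+1)}\to\bX^{(k)}$ produces the projective birational $\bX'=\bX^{(m)}\to\bX$.

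I expect the main obstacle to be the bookkeeping in case~(2): confirming via Proposition~\ref{prop.saturatedblowups-and-basechange} and compatibility of blowups with open immersions that restricting the center and saturating really lands inside $\ix^{(k+1)}$ as an open substack, and that integrality together with density of the stable locus propagate along the tower so that the induced maps downstairs remain birational. A secondary point is to verify that each restricted center $\ic_k\cap\ix'^{(k)}$ is nowhere dense—forced by the assumption that $\tilde\ix\to\ix$ is birational—so that the modifications $\bX^{(k+1)}\to\bX^{(k)}$ are honest birational blowups rather than degenerate ones.
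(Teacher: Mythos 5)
Your proposal is correct and follows essentially the same route as the paper: write $\tilde{\ix}=\Proj_\ix\iA$, pass to the saturated $\Proj$ (resp.\ iterated saturated blowups), and read off the good moduli space, stability and birationality from Propositions~\ref{prop.saturatedproj}, \ref{prop.saturatedblowups} and~\ref{prop.stability.saturatedproj}. The only difference is that the paper compresses case~(2) into the phrase ``we treat a sequence of blowups by induction,'' whereas you spell out the inductive tower; the details you supply match what the paper leaves implicit.
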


\begin{proof}
Since $\tilde{\ix} \to \ix$ is projective we can write $\tilde{\ix}
= \Proj_\ix{\iA}$ for some graded sheaf $\iA$ of finitely generated ${\mathcal
  O}_{\ix}$-algebras. If $\tilde{\ix} \to \ix$ is a blowup, we choose
$\iA$ as the Rees algebra of this blowup. We treat a sequence of blowups
by induction.

Let $\ix' = \Proj^\pi_\ix \iA$.
By Proposition \ref{prop.saturatedproj},
$\ix' \to \bX' =\Proj_\bX \pi_* \iA$ is a good moduli space morphism.
By Proposition~\ref{prop.stability.saturatedproj} it is stable.
If $\tilde{\ix}\to \ix$ is an isomorphism over the open dense subset
$U\subset \ix$ (resp.\ a sequence of blowups with centers outside $U$),
then $\bX'\to \bX$ is an isomorphism over
the open dense subset $\pi(U\cap \ix^s)$.
\end{proof}

\begin{corollary} \label{cor.resolutionofsings}
Let $\ix$ be a reduced Artin stack of finite type over a field of
characteristic $0$ (or merely quasi-excellent over $\QQ$)
with stable good moduli space
$\ix \stackrel{\pi} \to \bX$.
There exists a quasi-projective birational morphism $\ix' \to \ix$
with the following properties.
\begin{enumerate}
\item The stack $\ix'$ is smooth and admits a good moduli space
$\ix' \stackrel{\pi\smash{'}}\to \bX'$.
\item The induced map of moduli spaces $\bX' \to \bX$
is projective and birational.
\end{enumerate}
\end{corollary}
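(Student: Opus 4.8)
The goal is to prove Corollary~\ref{cor.resolutionofsings}: starting from an integral Artin stack $\ix$ with stable good moduli space $\pi\colon \ix\to \bX$ over a field of characteristic~$0$, we produce a \emph{smooth} stack $\ix'$ with a good moduli space $\ix'\to \bX'$ such that $\bX'\to \bX$ is projective and birational. The plan is to combine the functorial resolution of singularities available in characteristic zero with the machinery of saturated Proj developed earlier, specifically Proposition~\ref{prop.resolution}.

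First I would invoke the resolution-of-singularities functor $BR$ discussed in the paragraph preceding Proposition~\ref{prop.resolution}. Since $\ix$ is integral (hence reduced) of finite type over a field of characteristic zero, expressing $\ix$ as a quotient groupoid $[U/R]$ with smooth source and target lets the functor $BR$ extend to $\ix$, producing a projective birational morphism $\tilde\ix\to \ix$ which is a \emph{sequence of blowups in smooth centers} with $\tilde\ix$ smooth and which is an isomorphism over a dense open set. The key point is that because $BR$ commutes with smooth morphisms, it descends from the groupoid presentation to the stack, and the output is genuinely a finite sequence of blowups rather than an abstract projective morphism.

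Next I would feed this $\tilde\ix\to \ix$ into Proposition~\ref{prop.resolution}, invoking hypothesis~(2): $\tilde\ix\to \ix$ is a sequence of blowups. Proposition~\ref{prop.resolution} then produces an open substack $\ix'\subset \tilde\ix$ carrying a stable good moduli space $\ix'\to \bX'$ with $\bX'\to \bX$ projective and birational. Since $\ix'$ is open in the smooth stack $\tilde\ix$, it is itself smooth, giving conclusion~(1); conclusion~(2) is exactly the projectivity and birationality furnished by Proposition~\ref{prop.resolution}. The only point needing care is the word ``quasi-projective'' in the statement of the corollary versus the output $\ix'\subset \tilde\ix$ of Proposition~\ref{prop.resolution}: the morphism $\ix'\to \ix$ is the composite of the open immersion $\ix'\into \tilde\ix$ with the projective morphism $\tilde\ix\to \ix$, hence quasi-projective.

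The main subtlety I expect is not in the logical skeleton, which is short, but in confirming that the open substack $\ix'$ selected by the saturated-Proj construction applied to a \emph{sequence} of blowups is still obtained as an iterated saturated blowup and retains stability at each stage. In the proof of Proposition~\ref{prop.resolution} this is handled by induction, treating each blowup in the sequence via its Rees algebra and applying Proposition~\ref{prop.stability.saturatedproj}(2); one must check that the intermediate stacks remain integral with stable good moduli spaces so the induction hypothesis applies at each step. Since those propositions are already established, here the work reduces to correctly citing them and observing that smoothness of $\ix'$ is inherited from $\tilde\ix$ as an open condition. I would therefore keep the proof brief, deriving both conclusions directly from Proposition~\ref{prop.resolution} and the resolution functor $BR$.
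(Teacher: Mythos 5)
Your proposal is correct and matches the paper's own (very brief) proof exactly: invoke the functorial resolution $BR$ extended to Artin stacks to get a sequence of blowups $\tilde\ix\to\ix$ with $\tilde\ix$ smooth, then apply Proposition~\ref{prop.resolution} under hypothesis~(2) to extract the open substack $\ix'$ with a stable good moduli space, noting that smoothness is inherited by openness and quasi-projectivity of $\ix'\to\ix$ follows from composing the open immersion with the projective morphism. Your additional remarks on why the induction inside Proposition~\ref{prop.resolution} goes through are a reasonable elaboration but nothing beyond what the paper already established there.
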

\begin{proof}
Follows immediately from functorial resolution of singularities by a sequence
of blowups, and Proposition \ref{prop.resolution}.
\end{proof}

\subsection{Saturated Proj and semi-stability} \label{sec.saturated-proj-GIT}
The material in the next two sections is not needed in the sequel but provides a connection between the saturated Proj and semi-stability in GIT and its extension to stacks by
Alper \cite[Section 11]{Alp:13}.

Let $\ix \stackrel{p} \to S$ be a morphism from an algebraic stack to a scheme $S$. If $\iL$ is a line bundle on $\ix$ then following Alper \cite[Definition 11.1]{Alp:13} we define the open substack $\ix^{ss}(\iL/S)$ of semi-stable points to be the
set of $x \in |\ix|$ such that there is an affine open set $U \subset S$
and a section $t \in \Gamma(p^{-1}(U), {\iL}^n)$ for
some $n > 0$ such that $p^{-1}(U)_t$ is a cohomologically affine neighborhood of
$x$. 
If $S$ is affine then a
point $x \in |\ix|$ is semi-stable if and only if there
is a section $t \in \Gamma(\ix, \iL^n)$ such that $\ix_t$ is a cohomologically
affine neighborhood of $x$. In this case we will write $\ix^{ss}(\iL)$
instead of $\ix^{ss}(\iL/S)$.

\begin{remark}\label{rem.ss-locus}
The locus $\ix^{ss}(\iL/S)$ has the following interpretation. The line bundle
$\iL$ gives rise to a map $r_\iL\colon \iu\to \Proj_S (\bigoplus_{n\geq 0} p_*
\iL^n)$ where $\iu \subset \ix$ is the open locus where $p^*p_* \iL^n \to
\iL^n$ is surjective for some $n > 0$, or equivalently, for all sufficiently
divisible $n$. If $V\subset \Proj_S (\bigoplus_{n\geq 0} p_* \iL^n)$ is the
largest open subset over which $r_\iL$ is cohomologically affine, then
$\ix^{ss}(\iL/S) = (r_\iL)^{-1}(V)$ and $\ix^{ss}(\iL/S)\to V$ is a good
moduli space morphism.
\end{remark}

Let $f\colon S'\to S$ be a morphism of schemes, let $\ix' = S' \times_S \ix$
and let $g \colon \ix' \to \ix$ be the projection. Then
$g^{-1}(\ix^{ss}(\iL/S))\subset \ix'^{ss}(g^*\iL/S')$ and this inclusion is
an equality if $f$ is flat. This follows from the previous remark and flat
descent of cohomological affineness.
Alper's definition can thus be extended to the case where $S$ is an algebraic
space or an algebraic stack by flat descent.

Let $f\colon \ix'\to \ix$ be a projective morphism and let $\pi\colon \ix\to
\iy$ be a good moduli space morphism. Choose an $f$-ample line bundle $\iL$
on $\ix'$.
Then $\ix'=\Proj_\ix \left(\bigoplus_{n \geq 0} f_*\iL^n\right)$
and the semi-stable locus of $\ix'$ coincides with the saturated Proj:

\begin{proposition} \label{prop.stacky-variation-GIT}
$\ix'^{ss}(\iL/\iy) =\Proj^\pi_\ix \left(\bigoplus_{n \geq 0} f_*\iL^n\right)$
as open substacks of $\ix'$.
\end{proposition}
\begin{proof}
  Working locally in the smooth or fppf topology we may assume that $\iy$
  is affine so $\ix$ cohomologically affine. Since $\iL$ is $f$-ample,
  if $t \in \Gamma(\ix',\iL^n)$
  then $\ix'_t \to \ix$ is affine
  and so $\ix'_t$ is cohomologically affine. That is, $\ix'^{ss}(\iL) = \iu$
  in the notation of Remark~\ref{rem.ss-locus}.
  Thus a point $x$ is in $\ix'^{ss}(\iL)$ if and only
  if the map $f^* \pi^* \pi_* f_* \iL^n \to \iL^n$ is surjective at $x$ for
  all sufficiently divisible $n$.
  Its complement
  is exactly $V(\pi^{-1}(\pi_*( \bigoplus_{n>0} f_*\iL^n)))$ in $\Proj_\ix \bigl(\bigoplus_{n \geq 0} f_* \iL^n\bigr)$.
\end{proof}

\subsection{Composition of saturated Proj} \label{sec.comp_sat_proj}
In this section, which also is not needed in the sequel, we use the
relationship between semi-stability and saturated Proj to prove that the
composition of saturated Projs is a saturated Proj.
This provides a connection to \cite[Theorem 2.1]{Rei:89}.

Suppose that $\ix \stackrel{\pi} \to \iy$ is a good
moduli space morphism and $\ix' \stackrel{f} \to \ix$ is a projective morphism
with $f$-ample line bundle $\iL$ and $\ix'' \stackrel{g} \to \ix'$
is a projective morphism with $g$-ample line bundle $\iM$.
Set $\iM_a := \iM \otimes g^*\iL^a$. 
By \cite[Tag \spref{0C4K}]{stacks-project}
$\iM_a$ is $fg$-ample for all $a \gg 0$. 

We let $\iA := \bigoplus_{n\geq 0} f_* \iL^n$ so that $\ix'=\Proj_\ix \iA$.
Likewise, we let $\iB_a := \bigoplus_{n \geq 0} f_*g_* \iM_a^n$
so that $\ix''=\Proj_\ix \iB_a$ for all $a \gg 0$.
We let $\ix'^{\ss} := \ix'^{\ss}(\iL/\iy) = \Proj_\ix^{\pi} \iA$
and $\iy'=\Proj_\iy \pi_* \iA$ so that we have a good moduli space morphism
$\pi' \colon \ix'^{\ss} \to \iy'$.
Finally, we let $\iC = \bigoplus_{n \geq 0} g_* \iM^n|_{\ix'^{\ss}}$
so that $g^{-1}(\ix'^{\ss})=\Proj_{\ix'^{\ss}} \iC$.

\begin{proposition} \label{prop.composition-saturated-proj}
  For $a \gg 0$, $\Proj^{\pi}_\ix \iB_a = \Proj^{\pi'}_{\ix'^{\ss}} \iC$.
  Equivalently
  $$ (\ix'')^{ss}(\iM_a/\iy) = \left(g^{-1}(\ix'^{ss})\right)^{\ss}(\iM/\iy').$$
\end{proposition}

\begin{example}[Saturated Projs and GIT] \label{ex.saturated-blowup}
  Let $G$ be a linearly reductive group.
  When $\ix \to \iy$ is the good moduli space morphism
  $BG \to \Spec k$ then we are in the situation of classical GIT.
  If $\ix' = [X'/G]$, $\ix'' = [X''/G]$ and $\iL$ and $\iM$ correspond to $G$-linearized ample line bundles $L$ and $M$ on $X'$ and $X''$ respectively, Proposition \ref{prop.composition-saturated-proj}
  gives a precise description of $(X'')^{ss}(M_a)$ which generalizes \cite[Theorem 2.1]{Rei:89}.
  For example let $g \colon X'' \to X'$ be the blowup of $X'$ along a $G$-invariant
    closed subscheme $C$, let $E$ be the exceptional divisor and let
    $K_a = g^*L^a(-E)$. Then for $a \gg 0$, $X''^{ss}(K_a) = \bigl(
    g^{-1}(X'^{ss})(L)\bigr)^{ss}(-E)$ and
    $[X''^{ss}(K_a)/G] = \Bl_{\ic}^{\pi'} [X'^{ss}(L)/G]$
    where $\ic = [(C \cap X'^{ss}(L))/G]$. This can be viewed as an extension
    of \cite[Theorem 2.4]{Rei:89} when $X'$ and $C$ are singular.
    \end{example}

The proof of Proposition \ref{prop.composition-saturated-proj} follows from
the following three steps.
\begin{enumerate}
\item \label{item.comp.one}
  $(\ix'')^{ss}(\iM_a) \cap g^{-1}(\ix'^{ss}) \subset
  \left( g^{-1}(\ix'^{ss}) \right)^{ss}(\iM/\iy')$
  for all $a \in \ZZ$.

\item \label{item.comp.two}
  For $a \gg 0$ then $\left(g^{-1}(\ix'^{ss})\right)^{ss}(\iM/\iy')
  \subset (\ix'')^{ss}(\iM_a)$.

\item \label{item.comp.three}
  For $a \gg 0$,
  $(\ix'')^{ss}(\iM_a) \subset g^{-1}(\ix'^{ss})$.
\end{enumerate}

In the first two steps, we do not need any hypotheses:
$\ix''\to \ix'$ need not be projective,
$\ix'\to \iy$ need not be the composition of a projective morphism
followed by a good moduli space morphism, and $\iL$ and $\iM$ can be
arbitrary line bundles.

\begin{proof}[Proof of \itemref{item.comp.one}]
  Working locally on $\iy$ we may, without loss of generality, assume that
  $\iy$ is affine. Then $x'' \in (\ix'')^{ss}(\iM_a)$ if and only if
  there is a section $t \in \Gamma(\ix'',\iM_a^n)$ such that $(\ix'')_t$
  is a cohomologically affine neighborhood of $x''$.

  If $x'$ is a point of $\ix'^{ss}$ then there is a section $w\in
  \Gamma(\ix',\iL^m)$ such that $\iu':=(\ix')_w$ is a cohomologically affine
  neighborhood of $x'$. Note that the good moduli space of $\iu'$ is the open
  affine subscheme $U'$ of the scheme $\iy'=\Proj_\iy\left( \bigoplus_{m\geq 0}
  \Gamma(\ix',\iL^m) \right)$ given by $w\neq 0$ and that $\iu' = \pi'^{-1}(U')$.

  Since $\iL^m|_{\iu'}$ is trivial, we have that
  $(\iM_a)^m = \iM^m$ on $g^{-1}(\iu')$, so that $\tilde{t}:=t^m|_{g^{-1}(\iu')}
    \in \Gamma(g^{-1}(\iu'), \iM^{mn})$.
  Hence if $x'' \in (\ix'')^{ss}(\iM_a) \cap g^{-1}(\ix'^{ss})$
  then $x'' \in (\pi' \circ g)^{-1}(U')_{\tilde{t}}$.
  Since $U'\to \iy'$ is affine, we have that $(\pi' \circ g)^{-1}(U')_{\tilde{t}}
  = (\pi' \circ g)^{-1}(U') \cap (\ix'')_t$ is cohomologically affine.
  Therefore $x'' \in \left(g^{-1}(\ix'^{ss})\right)^{ss}(\iM/\iy')$.
\end{proof}

\begin{proof}[Proof of \itemref{item.comp.two}]
  The proof is similar to the proof of \cite[Tag \spref{0C4K}]{stacks-project}.
  
  Again we may assume that $\iy$ is affine. Since $\ix'$ is quasi-compact,
  we may find a sufficiently divisible $m$ such that $\ix'^{ss}$
  is covered by a finite number of cohomologically affine open sets
  $(\ix')_w$ with $w \in \Gamma(\ix', \iL^m)$.

  Suppose that $x'' \in \left(g^{-1}(\ix'^{ss})\right)^{ss}(\iM/\iy')$. Then for
  some $n > 0$ there are sections $w \in \Gamma(\ix', \iL^m)$ and
  $t \in \Gamma(g^{-1}(\ix'_w), \iM^n))$
  such that $g^{-1}(\ix'_w)_t$ is a cohomologically affine neighborhood
  of $x''$. Replacing
  $t$ by $t^m$ we may assume that $m | n$.

  The proof of \cite[Tag \spref{01PW}]{stacks-project}
  adapts to algebraic stacks and implies that there is
  a section $t_1 \in \Gamma(\ix'', \iM^n \otimes g^*\iL^{me})$ such
  that $t_1|_{(\ix'')_{g^*w}} = t(g^*w)^e$ for some $e > 0$.

  Note that for any $b > 0$ if we let  $t_2 =t_1 (g^*w)^b$ then
  $\ix''_{t_2} = (\ix''_{g^*w})_t = g^{-1}(\ix'_w)_t$ is a cohomologically affine
  neighborhood of $x''$ by the choice of $t$.

  Since $m | n$, then for any $a \gg 0$ we can find $b > 0$ such that
  $m(e+b) = na$ so we can take $t_2 \in \Gamma(\ix'', \iM_a^n)$. Hence
  $x'' \in (\ix'')^{ss}(\iM_a)$.
\end{proof}

\begin{proof}[Proof of \itemref{item.comp.three}]
  Our goal is to show that for every $a \gg 0$, if $x''\in \ix''$ is such that
  $g(x'')$ is $\iL$-unstable then $x''$ is $\iM_a$-unstable.  
  We can replace $\iM$ by $\iM_b$ for some $b \gg 0$ since
  $(\iM_b)_a = \iM_{b+a}$.
  We can replace $\iL$ and $\iM$ by $\iL^m$ and $\iM^m$ since
  $\iM^m \otimes g^*(\iL^m)^a = \iM_{a}^m$.
  In particular, if we replace $\iM$ by $\iM_b$ for $b \gg 0$
  we may assume that $\iM$ is $fg$-ample.

  Again we may work locally on $\iy$ and assume that it is affine.
  By the local structure theorem \cite[Theorem 13.1(1)]{AHR:19} (cf.~Theorem \ref{thm.localstructuregeneral})
  we may also assume that
  $\ix = [ \Spec A/\GL_n]$  and $\iy = \Spec A^{\GL_n}$.
  Since the morphisms $\ix' \to \ix$ and $\ix'' \to \ix$
  are projective, $\ix' = [X'/\GL_n]$ and $\ix''=[X''/\GL_n]$ where
  $X' \to \Spec A$ and $X'' \to \Spec A$ are $\GL_n$-equivariant morphisms.
  The ample line bundles $\iL$ and $\iM$ correspond to $\GL_n$-equivariant ample
  line bundles $L$ and $M$ on $X'$ and $X''$ respectively. Since we are working over an affine base, there exists $m > 0$ such that $L^m$ and $M^m$ are both very ample. Thus, replacing $\iL$ and $\iM$ by $\iL^m$
  and $\iM^m$ we assume that there are $\GL_n$-invariant closed embeddings
  $X' \subset \Pro^N_A$, $X'' \subset \Pro^M_A$ such that $L$ and $M$
  are the pullbacks of ${\mathcal O}_{\Pro^N_A}(1)$ and ${\mathcal O}_{\Pro^M_A}(1)$ respectively.

  Since the map on global sections $\Gamma(\Pro^N_A, {\mathcal O}_{\Pro^N_A}(k))
  \to \Gamma(X', L^k)$ is surjective for $k \gg 0$,
  $\ix'^{ss} = [((\Pro_A^N)^{ss} \cap X')/\GL_n]$, where $(\Pro_A^N)^{ss}$
  refers to the semi-stable points with respect to ${\mathcal O}_{\Pro^N}(1)$.
Likewise, $(\ix'')^{ss} = [((\Pro_A^M)^{ss} \cap X'')/\GL_n]$.

 The map
 $X'' \to X'$ factors through the projection $\Pro^N \times \Pro^M \to \Pro^N$
 and it suffices to show that for $a \gg 0$ any point $(x',x'') \in \Pro_A^N \times \Pro_A^M$ with $x'$ unstable is ${\mathcal O}_{\Pro^N_A}(a) \boxtimes {\mathcal O}_{\Pro^M_A}(1)$-unstable.

 To do this we can use the Hilbert--Mumford criterion for the geometrically
 reductive group $\GL_n$ as in the proof of \cite[Theorem 2.1a]{Rei:89}.
\end{proof}

\begin{example}
  The following example shows that the condition that $a \gg 0$
  is necessary for \itemref{item.comp.three} to hold. Take $\ix' = \ix = B\G_m$, $\iy= \Spec k$. Denote by $\chi_d$ for $d \in \ZZ$ the character of $\G_m$
  with weight $d$.
    Let $\iL$ be the line bundle on $B\G_m$ corresponding to the
  character $\chi_1$ of weight one. The global sections
  $\Gamma(\ix', \iL^n)$ are zero since, $\chi_1^{\otimes n} = \chi_n$ has
no non-zero invariants. Hence $\ix'^{ss}(\iL) = \emptyset$.

Let $V = \chi_d + \chi_{-d}$ and
 Set $\ix'' = \Proj_\ix(V)$,
 and take $\iM$ to be the tautological line bundle on $\ix''$. The global
 sections $\Gamma(\ix'', \iM^n)$ is the $\G_m$-invariant subspace
 of the $\G_m$-module $\Sym^n(V)$. Denote by $x,y$ the two coordinate
 functions on $V$ with respect to the decomposition $V = \chi_d + \chi_{-d}$.
 Then $\Gamma(\ix'', \iM^{n}) = \langle x^{n/2}y^{n/2} \rangle$ if $n$ is even
 and zero otherwise, and
 $(\ix'')^{ss}(\iM) = \ix'' \smallsetminus \{[0/\G_m] \cup [\infty/\G_m]\}$.

 Likewise $\Gamma(\ix'', (\iM \otimes \iL^a)^n)$ is the
 invariant subspace of the $\G_m$-representation
 $W  = (\Sym^n(\iM \otimes \iL^a))$.
It decomposes as a sum of characters
 $W = \chi_{n(d+a)} + \chi_{n(d+a)-2d} + \cdots + \chi_{n(-d+a)}$. If $a \leq d$
 and $n=2d$ then one of these summands has weight $0$, while if $a > d$ then all summands have positive weights.
Hence
 $(\ix'')^{ss} (\iM_a) = \emptyset$ only when $a > d$. 
 \end{example}

\section{Reichstein transforms and saturated blowups} \label{sec.reichstein}
When $\ix$ and $\ic$ are smooth over a field $k$,
then the saturated blowup of $\ix$
along $\ic$ has a particularly nice description in terms of
{\em Reichstein transforms}.

The following definition is a straightforward extension of
the one originally made in \cite{EdMo:12}. 
\begin{definition} \label{def.reichstein}
  Let $\ix \stackrel{\pi} \to \iy$ be a good moduli space morphism and let $\ic \subset
  \ix$ be a closed substack. The {\em Reichstein transform} with center 
$\ic$, is the stack $R(\ix, \ic)$
obtained by deleting the strict transform of the
  saturation $\pi^{-1}(\pi(\ic))$ in the blowup of $\ix$ along~$\ic$.
\end{definition}

Recall that if $f\colon \Bl_\ic \ix\to \ix$ is the blowup, then
$E=f^{-1}(\ic)$ is the exceptional divisor and
$\overline{f^{-1}(\iz) \smallsetminus E}=\Bl_{\ic\cap\iz} \iz$ is the strict transform of
$\iz\subset \ix$.

\begin{remark}
  Observe that if $\ix$ and $\ic$ are smooth, then $R(\ix, \ic)$ is
  smooth since it is an open set in the blowup of a smooth stack along
  a closed smooth substack.
\end{remark}

\begin{remark}\label{rem.reichstein.basechange}
Let
\vspace{-4mm}
\[
\xymatrix{
\ix^\prime  \ar[r]^\psi \ar[d]^{\pi^{\prime}} & \ix\ar[d]^{\pi}\\
\iy^\prime \ar[r]^\phi & \iy
}
\]
be a cartesian diagram where the horizontal maps are flat and the vertical maps
are good moduli space morphisms. If $\ic \subset \ix$ is a closed substack, then
$R(\ix^\prime, \psi^{-1}\ic) = \ix^\prime \times_\ix R(\ix, \ic)$.
This follows because blowups commute with flat base change and the saturation of
$\psi^{-1}(\ic)$ is the inverse image of the saturation of $\ic$.
\end{remark}

\begin{definition}[Equivariant Reichstein
  transform] \label{def.equivreichstein} If an algebraic group $G$
  acts on a scheme $X$ with a good quotient $p \colon X \to X\gitq G$ and
  $C$ is a $G$-invariant closed subscheme, then we write $R_G(X,C)$
  for the complement of the strict transform of $p^{-1}p(C)$ in
the blowup of $X$ along $C$.  There
  is a natural $G$-action on $R_G(X,C)$ and $R([X/G], [C/G]) =
  [R_G(X,C)/G]$.
\end{definition}

\begin{proposition} \label{prop.reichstein.is.saturatedproj}
Let $\pi \colon \ix \to \iy$ be a good moduli space morphism and let $\ic
\subset \ix$ be a closed substack. If $\ix$ and
$\ic$ are smooth over a field $k$, then $R(\ix, \ic)=\Bl^\pi_\ic \ix$ as open
substacks of $\Bl_\ic \ix$. In general, $\Bl^\pi_\ic \ix\subset R(\ix, \ic)$.
\end{proposition}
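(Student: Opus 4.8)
The plan is to understand both sides of the claimed equality as open substacks of $\Bl_\ic \ix$ and to show that they are cut out by the same locus. Recall that $\Bl^\pi_\ic\ix = \Proj^\pi_\ix(\bigoplus \iI^n)$ is by definition the complement in $\Bl_\ic\ix$ of $V(\pi^{-1}\pi_*\iI^+)$, while $R(\ix,\ic)$ is the complement of the strict transform of the saturation $\pi^{-1}(\pi(\ic))$. So the task is to compare these two closed subsets of the blowup. The general inclusion $\Bl^\pi_\ic\ix \subset R(\ix,\ic)$ should be the easier half: one checks that the strict transform of $\pi^{-1}(\pi(\ic))$ is contained in $V(\pi^{-1}\pi_*\iI^+)$, i.e.\ that deleting the smaller set (the strict transform) still leaves us inside the saturated Proj. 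This is a containment of closed sets and should follow from the functorial description of the saturation together with Proposition~\ref{prop.saturatedblowups}~\itemref{item.saturatedblowup.excdiv}, which already identifies $f^{-1}(\pi^{-1}(\pi(\ic)))$ with the exceptional divisor $\ie = V(\pi^{-1}\pi_*\iI^+)\cap E$.

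For the reverse inclusion under the smoothness hypotheses, the plan is to reduce to an explicit local computation. First I would use the local structure theorem to reduce to the quotient case $\ix = [X/G]$ with $X = \Spec A$ smooth affine and $\ic = [C/G]$ with $C$ smooth; by Definition~\ref{def.equivreichstein} it then suffices to compare $R_G(X,C)$ with the affine-local charts of the saturated blowup. Since both $X$ and $C$ are smooth, the ideal $\iI$ is locally generated by part of a regular system of parameters, so the blowup $\Bl_C X$ and its exceptional divisor admit the usual explicit charts. In each such chart the strict transform of $p^{-1}p(C)$ and the vanishing locus $V(\pi^{-1}\pi_*\iI^+)$ can both be written down, and the claim is that they agree. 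The key geometric input is that, in the smooth case, the saturation $p^{-1}(p(C))$ meets a neighborhood of $C$ in exactly the fibers swept out by the $G$-action, so that after blowing up, its strict transform is precisely the part of the blowup where no invariant function of positive degree survives — which is exactly $V(\pi^{-1}\pi_*\iI^+)$.

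The main obstacle I expect is controlling the scheme-theoretic strict transform versus the set-theoretic vanishing locus $V(\pi^{-1}\pi_*\iI^+)$: a priori these could differ by nilpotents or by components supported over the saturation but not over $\ic$ itself. Here smoothness is doing real work. Because $\ix$ and $\ic$ are smooth, the relevant Rees algebra $\bigoplus\iI^n$ is well-behaved (the blowup is smooth and the exceptional divisor is a projective bundle over $\ic$), and the saturation $\pi^{-1}(\pi(\ic))$ is reduced along a neighborhood of the generic behavior one needs. I would verify the matching of the two loci at the level of the charts $\Spec_\ix \iA_{(f)}$ appearing in the proof of Proposition~\ref{prop.saturatedproj}, where $f$ ranges over $\pi_*\iI^+$: on such a chart the complement of $V(\pi^{-1}\pi_*\iI^+)$ is exactly where some invariant generator $f$ is inverted, and one checks that inverting $f$ is equivalent to having left the strict transform of the saturation. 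Comparison with \cite[Theorem 2.4]{Rei:89} via Example~\ref{ex.saturated-blowup} provides an independent sanity check that the two constructions coincide in the GIT setting.

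Finally, having established the equality locally and the inclusion in general, I would assemble the global statement by noting that both $R(\ix,\ic)$ and $\Bl^\pi_\ic\ix$ are open substacks of the same blowup $\Bl_\ic\ix$ whose formation is local on $\iy$ (saturated Proj commutes with smooth base change on $\iy$ by Proposition~\ref{prop.saturatedproj-and-basechange}~\itemref{item.saturatedproj.basechange}, and the Reichstein transform does so by Remark~\ref{rem.reichstein.basechange}). Thus the local agreement of the deleted loci glues to a global agreement, yielding $R(\ix,\ic)=\Bl^\pi_\ic\ix$ when $\ix$ and $\ic$ are smooth, and the containment $\Bl^\pi_\ic\ix\subset R(\ix,\ic)$ in general.
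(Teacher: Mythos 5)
Your overall architecture matches the paper's: both sides are open substacks of $\Bl_\ic\ix$, so the problem is to compare two closed subsets of the blowup --- the strict transform of the saturation, whose ideal in the Rees algebra $\iA=\bigoplus_n\iI^n$ is $\bigoplus_{n>0}(\iI^n\cap\iJ)$ with $\iJ=\pi_*\iI\cdot\cO_\ix$ the ideal of the saturation, and $V(\pi^{-1}\pi_*\iA^+)$, whose ideal is $\bigoplus_n\iK_n$ with $\iK_n=\sum_{k>0}\pi_*(\iI^k)\,\iI^{n-k}$. Your argument for the general containment $\Bl^\pi_\ic\ix\subset R(\ix,\ic)$ is fine: it follows either from the elementary ideal containment $\iK_n\subset\iI^n\cap\iJ$ (since $\pi_*(\iI^k)\cdot\cO_\ix\subset\iI^k\cap\iJ$), which is what the paper does, or from your observation via Proposition~\ref{prop.saturatedblowups}~\itemref{item.saturatedblowup.excdiv} that the strict transform is the closure of a set disjoint from the open substack $\Bl^\pi_\ic\ix$ and hence remains disjoint from it.

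The gap is in the reverse containment, which is the entire content of the smooth case. Your ``key geometric input'' --- that the strict transform of the saturation ``is precisely the part of the blowup where no invariant function of positive degree survives'' --- is a restatement of the conclusion, not an argument, and your chart-by-chart plan never says how the verification would actually go; ``the claim is that they agree'' is exactly what must be proved. The paper's proof supplies the missing computation: working smooth-locally on $\iy$ so that $\pi_*\iI=(f_1,\dots,f_a)$, setting $d_i=\ord_\ic(f_i)$, it establishes $\iI^n\cap\iJ=\sum_i f_i\cdot\iI^{n-d_i}$ for all $n>\max_i d_i$; since $f_i\in\pi_*(\iI^{d_i})$ this gives $\iI^n\cap\iJ\subset\iK_n$ in all large degrees, which suffices for equality of radicals of graded ideals. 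Smoothness of $\ix$ and $\ic$ is used precisely here, to identify $\iI^n$ with the functions vanishing to order at least $n$ along $\ic$ so that the description of $\iI^n\cap\iJ$ by orders of vanishing is valid. Without this (or an equivalent) computation your proof of the equality does not close. Incidentally, your detour through the local structure theorem and a presentation $[X/G]$ is unnecessary: the paper's argument is purely local on $\iy$ and never invokes a quotient presentation, and the GIT comparison via Example~\ref{ex.saturated-blowup} is a consequence of this proposition rather than an input to it.
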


\begin{proof}
Let $\iA = \bigoplus_{n \geq 0} \iI^n$ where $\iI$ defines $\ic$.
The saturation of $\ic$ is the subscheme defined by the ideal $\iJ = \pi_* \iI \cdot \cO_\ix$ so 
the strict transform of the saturation is the blowup of
the substack $V(\iJ)$ along the ideal $\iI/\iJ$, which is
$\Proj_\ic\bigl( \bigoplus_{n \geq 0} (\iI^n/(\iI^n \cap \iJ))\bigr)$. 
Thus the ideal of the strict transform of the 
saturation
is the graded ideal $\bigoplus_{n\geq 0} (\iI^n\cap \iJ) \subset \iA$.
We need to show that this ideal defines 
the same closed subset of the points of the blowup as the ideal $\pi^{-1}\pi_*(\iA_+)$.

Since $\iA_+ = \bigoplus_{n>0} \iI^n$ we have that
\[
\pi^{-1}\pi_*(\iA_+) = \pi_*(\iA_+) \cdot \iA
= {\textstyle \bigoplus_{n>0} \iK_n}
\]
where $\iK_n=\sum_{k>0} \pi_*(\iI^k) \iI^{n-k}$. It suffices to show that
\[
\iI^n \cap \iJ = \iK_n \;\;\forall n \gg 0.
\]
Observe that $\pi_*(\iI^k) \cdot \cO_\ix \subset \iI^k$ and $\pi_*(\iI^k) \cdot
\cO_\ix \subset \pi_*(\iI) \cdot \cO_\ix=\iJ$ so $\pi_*(\iI^k) \iI^{n-k}
\subset \iI^n\cap \iJ$. Hence $\iK_n \subset \iI^n \cap \iJ$ for all $n$.

To establish the opposite inclusion, we work smooth-locally on $\iy$. We may
thus assume that $\iy=\Spec A$ and $\pi_*\iI=(f_1,f_2,\dots,f_a)\subset A$.
When $\ix$ and $\ic$ are smooth over a field, the ideal
$\iI^n\cap \iJ$ can locally be described as all functions in
$\iJ=(f_1,f_2,\dots,f_a)\cdot \cO_\ix$ that vanish to order at least $n$ along $\ic$.
If $\ord_\ic(f_i)=d_i$, that is, if $\pi^*f_i\in \iI^{d_i}\smallsetminus \iI^{d_i+1}$,
then for any $n$ greater than all the $d_i$'s, we have
that $\iI^n\cap \iJ=\sum_{i=1}^a f_i\cdot \iI^{n-d_i}$. Since $f_i\in
\pi_*(\iI^{d_i})$ it follows that
\[
\iI^n\cap \iJ\subset \sum_{i=1}^a \pi_*(\iI^{d_i}) \iI^{n-d_i}\subset \iK_n
\]
for all $n$ greater than the $d_i$. 
\end{proof}

The following example shows that if $\ic$ is singular, then the Reichstein
transform need not equal the saturated blowup and could even fail to have a
good moduli space. Also see Examples~\ref{ex.false} and \ref{ex.false2}.

\begin{example}
Let $U=\Spec k[x,y]$ where $\GG_m$ acts by $\lambda(a,b)=(\lambda
a,\lambda^{-1}b)$ and let $\ix=[U/\GG_m]$. Its good moduli space is $\bX =
\Spec k[xy]$. Let $Z=V(x^2y,xy^2)\subset U$ and $\ic=[Z/\GG_m]$. Its saturation
is $\sat \ic=V(x^2y^2)$ which has strict transform $\Bl_\ic \sat
\ic=\emptyset$.  Thus, the Reichstein transform $R(\ix, \ic)$ equals
$\Bl_\ic \ix$.

We will show that $\Bl_\ic \ix$ has no good moduli space. To see this, note
that $(x^2y, y^2x) = (xy) \cdot (x,y)$. Since $(xy)$ is invertible
we conclude
that $\Bl_\ic \ix=\Bl_\iP \ix$ where $\iP=[V(x,y)/\G_m]$. The exceptional divisor of the
latter blowup is $[\PP^1/\GG_m]$ where $\GG_m$ acts by $\lambda[a:b]=[\lambda
  a:\lambda^{-1}b]$. This has no good moduli space since the closure of the
open orbit contains the two fixed points $[0:1]$ and $[1:0]$.

The Reichstein transform $R(\ix, \iP)$, on the other hand, equals $\Bl_\iP \ix
\smallsetminus [(D_1\amalg D_2)/\G_m]$ where $D_1$ and $D_2$ are the strict
transforms of the coordinate axes. In particular, the exceptional divisor is
missing the two points $[0:1]$ and $[1:0]$ and $R(\ix, \iP)$ is tame with coarse moduli space
$\Bl_{\pi(\iP)} \bX=\bX$.
\end{example}


\section{Equivariant Reichstein transforms and fixed points}
In this section all schemes are of finite type over a field
$k$. For simplicity of exposition we will also assume that $k$ is algebraically closed. The goal of this section is to prove the following theorem. 
\begin{theorem} \label{thm.fixed}
Let $X= \Spec A$ be a smooth affine scheme with an
action of a smooth connected linearly reductive group $G$.
Then $R_G(X,X^G)^G  = \emptyset$.
\end{theorem}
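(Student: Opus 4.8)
The plan is to reduce the assertion to the linear case $X=V$ a representation, where it becomes an explicit one-parameter subgroup computation, using that the question is étale-local near a fixed point. First I would observe that the blowup morphism $b\colon \Bl_{X^G} X\to X$ is $G$-equivariant with $G$-invariant center, so any $G$-fixed point $\xi$ of $\Bl_{X^G}X$ satisfies $b(\xi)\in X^G$; that is, all fixed points lie on the exceptional divisor $E=b^{-1}(X^G)$. Thus it suffices to show that every $G$-fixed point of $E$ is deleted when we pass to $R_G(X,X^G)$, i.e.\ that it lies in the strict transform of the saturation $\sat(X^G)=p^{-1}p(X^G)$. Since this concerns only points lying over $X^G$, it is a local question around a fixed point $x\in X^G$.

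Next I would invoke the Luna/\cite{AHR:15} étale slice theorem: since $x$ is fixed, its slice representation is the full tangent space $V:=T_xX$, and there is a $G$-equivariant étale morphism $\psi\colon (V,0)\to (X,x)$ that is cartesian over the good quotients and satisfies $V^G=\psi^{-1}(X^G)$. Because the Reichstein transform commutes with flat base change over the good moduli space (Remark~\ref{rem.reichstein.basechange}), $R_G(V,V^G)$ is identified with the pullback of $R_G(X,X^G)$ along $\psi$, and $G$-fixed points correspond under this identification. Hence it is enough to prove the theorem when $X=V$ is a representation, which is the content of Proposition~\ref{prop.repfixed}.

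For the representation case I would argue by direct degeneration. Write $V=V^G\oplus W$ with $W$ the sum of the nontrivial isotypic components, so that the $G$-fixed points of $E$ are the pairs $(v_0,[w])$ with $v_0\in V^G$ and $\langle w\rangle\subset W$ a $G$-stable line; $G$ acts on this line through a character $\chi$ which is necessarily nontrivial since $w\notin V^G$. As $G$ is connected linearly reductive, its identity component is a torus in positive characteristic and reductive in characteristic zero, so in either case $\chi$ restricts nontrivially to a maximal torus and there is a one-parameter subgroup $\lambda\colon \GG_m\to G$ with $\chi\circ\lambda\colon t\mapsto t^m$ and $m>0$ (replacing $\lambda$ by $\lambda^{-1}$ if needed). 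Then $\lambda(t)\cdot(v_0+sw)=v_0+st^mw\to v_0\in V^G$ as $t\to 0$, so the orbit closure of $v_0+sw$ meets $V^G$ and the whole line $\{v_0+sw : s\in \Af^1\}$ lies in $\sat(V^G)$. This line passes through $v_0$ with tangent direction $[w]$, so its strict transform in $\Bl_{V^G}V$ meets $E$ precisely at $(v_0,[w])$; thus every fixed point of $E$ lies in the strict transform of the saturation and is removed, giving $R_G(V,V^G)^G=\emptyset$.

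The main obstacle is the reduction step rather than the computation. One must ensure the slice theorem is available in arbitrary characteristic (guaranteed by \cite{AHR:15}) and that both the fixed locus and the Reichstein transform are genuinely compatible with the étale slice base change; the key technical point is the scheme-theoretic equality $V^G=\psi^{-1}(X^G)$, so that the two blowup centers correspond and Remark~\ref{rem.reichstein.basechange} applies with center $X^G$. Once the splitting $V=V^G\oplus W$ and the degenerating $\lambda$ are in hand, the representation case is elementary.
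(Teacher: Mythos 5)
Your proof is correct and follows essentially the same route as the paper: reduce to the tangent-space representation via the strongly \'etale slice of Lemma~\ref{lem.luna} together with compatibility of the Reichstein transform with such base change (Remark~\ref{rem.reichstein.basechange}), then destabilize each invariant line by a one-parameter subgroup pairing positively with its necessarily nontrivial character. The only cosmetic differences are that the paper first splits off the trivial summand and locates the fixed point inside $\Pro(V_\lambda^+)$, i.e.\ in the strict transform of the whole linear subspace $\sat_\lambda\{0\}$ rather than of a single degenerating line, and your remark that the identity component is a torus in characteristic $p$ is precisely the reducedness hypothesis the paper also implicitly relies on to produce $\lambda$.
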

\begin{remark}\label{rem.fixlocus.smooth}
By \cite[Proposition A.8.10]{CGP:10} the fixed locus $X^G$ is a closed smooth subscheme of $X$. Note that if $G$ acts trivially, then $X^G=X$ and
$R_G(X,X^G) = \emptyset$.
\end{remark}

\begin{remark}
Theorem \ref{thm.fixed} is false if we drop the assumption that $X$ is smooth
or that $G$ is smooth and connected.
See Examples \ref{ex.false} and \ref{ex.false3} below.
\end{remark}

\subsection{The case of a representation}
In this section we prove Theorem \ref{thm.fixed} when $X=V$ is a representation
of $G$.

\begin{proposition} \label{prop.repfixed}
Let $V$ be a representation of a smooth connected linearly
reductive group $G$. Then $R_G(V,V^G)^G = \emptyset$.
\end{proposition}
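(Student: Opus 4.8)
The plan is to reduce the statement to a concrete calculation on the representation $V$, where the linear structure gives us coordinates to work with explicitly. First I would decompose $V$ as a $G$-representation. Since $G$ is linearly reductive, we can write $V = V^G \oplus W$, where $W$ is a $G$-stable complement on which $G$ acts with no nonzero fixed vectors (i.e. $W^G = 0$). Here $V^G$ is precisely the fixed locus that serves as the center of the blowup. The blowup $\Bl_{V^G} V$ is then the total space of the projectivization in the $W$-directions: away from $V^G$, a point of $V$ is determined by its $V^G$-component together with a point of $\PP(W)$ recording the direction of its $W$-component and the distance along it.

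Next I would identify the saturation $\pi^{-1}(\pi(V^G))$ that must be deleted. The good quotient map $\pi \colon V \to V \gitq G = \Spec (\Sym V^\vee)^G$ sends $V^G$ to its image, and the saturation consists of all points whose closed orbit-closure meets $V^G$. Because $W^G = 0$, the key point is that for $w \in W$, the origin $0$ lies in the closure of the orbit $G \cdot w$ precisely along the directions governed by the Hilbert--Mumford criterion; concretely, $V^G$ saturates to the locus of points $v = (v_0, w)$ whose $W$-component $w$ is \emph{unstable} in the sense that $0 \in \overline{G \cdot w}$. I would make this explicit using one-parameter subgroups: a point survives into $R_G(V, V^G)$ iff its $W$-component is semistable with respect to the natural linearization coming from the blowup.

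The heart of the proof is then the following: I claim that the exceptional divisor $E = \PP(W) \times_{} V^G$ of the blowup meets $R_G(V, V^G)$ exactly in the semistable directions, and that \emph{no} $G$-fixed point of $\Bl_{V^G} V$ survives the deletion of the saturated strict transform. A point of $\Bl_{V^G} V$ fixed by $G$ must lie over a fixed point of the base, hence over $V^G$ (so on $E$), and its $\PP(W)$-coordinate must be a $G$-fixed point of $\PP(W)$, i.e. a $G$-stable line $\ell \subset W$. But any $G$-stable line $\ell$ carries a character $\chi$ of $G$; since $G$ is connected, if $\chi$ is nontrivial then along $\ell$ either $\lambda(t)$ or $\lambda(t)^{-1}$ drives a vector to $0$, so $0 \in \overline{G \cdot w}$ for $w \in \ell$, placing $\ell$ in the saturation and hence deleting it. If instead $\chi$ is trivial, then $\ell \subset W^G = 0$, a contradiction. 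This connectedness-driven dichotomy — every $G$-stable line is either deleted by instability or forced into the zero fixed space — is exactly where the hypothesis that $G$ is connected is used, and I expect it to be the main obstacle to state cleanly.

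The final step is bookkeeping: I would verify that the deletion of the saturated strict transform removes precisely the fixed lines identified above, so that $R_G(V, V^G)^G = \emptyset$. The main technical care is in matching the scheme-theoretic saturation $\pi^{-1}(\pi(V^G))$ with the instability locus via the Hilbert--Mumford numerical criterion, and in checking that a $G$-fixed point of the blowup that is \emph{not} deleted would force a nonzero element of $W^G$. Since everything takes place inside a single representation with an explicit coordinate decomposition $V = V^G \oplus W$, these verifications are direct linear-algebra and one-parameter-subgroup computations rather than anything requiring the general stack machinery.
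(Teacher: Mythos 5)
Your proposal follows essentially the same route as the paper: decompose $V = V^G \oplus W$ with $W^G = 0$, observe that a $G$-fixed point of the blowup must lie on the exceptional divisor and correspond to a $G$-invariant line $\ell \subset W$ carrying a nontrivial character $\chi$, and use a one-parameter subgroup $\lambda$ pairing nontrivially with $\chi$ to conclude that $0 \in \overline{G\cdot w}$ for $w \in \ell$. The one step you should not dismiss as bookkeeping is the inference ``placing $\ell$ in the saturation and hence deleting it'': what gets deleted is the \emph{strict} transform of the saturation, and the entire exceptional divisor already maps into the saturation (indeed into the center $V^G$ itself), so mere membership in the saturation deletes nothing. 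What saves the argument is that the punctured line $\ell \smallsetminus \{0\}$ lies in the saturation \emph{away from} the center, and the closure of its lift to the blowup contains the fixed point $[\ell]$; hence $[\ell]$ lies in the strict transform. The paper packages exactly this by showing $[\ell] \in \PP(V_\lambda^+)$, where $V_\lambda^+ \cup V_\lambda^-$ is the $\lambda$-saturation of the origin and $\PP(V_\lambda^+)\cup\PP(V_\lambda^-)$ is the intersection of its strict transform with the exceptional divisor. With that sentence supplied, your argument is complete and coincides with the paper's.
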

\begin{proof}
Decompose
$V = V^0 \oplus V^m$ such that $V^0$ is the trivial submodule and
$V^m$ is a sum of non-trivial irreducible $G$-modules. 
Viewing $V$ as a variety we write $V = V^0 \times V^m$.
The fixed locus for the action of $G$ on $V$ is
$V^0 \times \{0\}$, so the blowup of $V$ along $V^G$ is isomorphic to
$V^0 \times \tilde{V}^m$ where $\tilde{V}^m$ is the blowup 
of $V^m$ at the origin. Also, the saturation of 
$V^G$ is $V^0 \times \sat_G\{0\}$ where $\sat_G\{0\}$ is the
$G$-saturation of the origin
in the representation $V^m$. 
Thus $R_G(V,V^G) = V^0 \times R_G(V^m,0)$ so to prove the proposition 
we are reduced to the case that $V =V^m$; that is, $V$ is a sum of non-trivial
irreducible representations and $\{0\}$ is the only $G$-fixed point.

To prove the proposition we must show that every $G$-fixed point of the exceptional divisor $\Pro(V) \subset \tilde{V}$
is contained in the strict transform of
\[
\sat_G \{0\} = \{ v \in V : 0 \in \overline{Gv} \}.
\]
Let $x \in \Pro(V)$ be a $G$-fixed point. The fixed point $x$ corresponds
to a $G$-invariant line $L \subset V$, inducing a character $\chi$ of $G$. Since
the origin is the only fixed point, the character $\chi$ is necessarily non-trivial. Since $G$ is smooth and connected, there is a
1-parameter subgroup $\lambda$ such that $\langle \lambda, \chi \rangle > 0$.
Then $\lambda$ acts with positive weight $\alpha$ on $L$ and
thus $L \subset \sat_\lambda\{0\} = V_\lambda^+ \cup V_\lambda^-$
where
\begin{align*}
V_\lambda^+ &= \{ v \in V : \lim_{t \to 0} \lambda(t)v =0\},\\
V_\lambda^- &= \{ v \in V : \lim_{t \to \infty} \lambda(t)v =0\}
\end{align*}
are the linear subspaces where $\lambda$ acts with positive weights and
negative weights respectively.

Since $\sat_G \{0\} \supset \sat_\lambda \{0\}$, it suffices to show
that $x \in \Pro(V)$ lies in the strict transform of $\sat_\lambda \{0\}$.
The blowup of $\sat_\lambda \{0\}$ in the origin intersects the exceptional divisor of $\tilde{V}$ in the (disjoint) linear subspaces $\Pro(V_\lambda^+)\cup
\Pro(V_\lambda^-)
\subset \Pro(V)$. Since $L \subset V_\lambda^+$ we see that our fixed point
$x$ is in $\Pro(V_\lambda^+)$ as desired.
\end{proof}

\subsection{Completion of the proof of Theorem \ref{thm.fixed}}
The following lemma is a special case of~\cite[Lemma on
  p.~96]{Lun:73} and Luna's fundamental lemma~\cite[p.~94]{Lun:73}.
For the convenience of the reader, we include the first part of the proof.
\begin{lemma}[Linearization]\label{lem.luna}
Let $X=\Spec A$ be a smooth affine scheme with the action of a linearly
reductive group $G$. If $x\in X^G$ is a closed fixed point,
then there is a $G$-saturated affine neighborhood $U$ of $x$ and
a $G$-equivariant \emph{strongly \'etale} morphism $\phi \colon U\to T_x X$,
with $\phi(x) = 0$. That is,
the diagram
\begin{equation*}
\vcenter{\xymatrix{
U \ar[r]^-\phi \ar[d]^{\pi_U} & T_xX\ar[d]^{\pi}\\
U\gitq G \ar[r]^-\psi & T_x X\gitq G
}}
\end{equation*}
is cartesian and the horizontal arrows are \'etale.
\end{lemma}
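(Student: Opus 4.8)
The plan is to follow Luna's slice construction at a fixed point and then invoke the fundamental lemma for the strongly \'etale conclusion. Write $A$ for the coordinate ring of $X$ and let $\mathfrak{m}_x\subset A$ be the maximal ideal of the closed fixed point $x$. Since $x$ is $G$-fixed, the ideal $\mathfrak{m}_x$ is $G$-invariant, so $G$ acts on the cotangent space $\mathfrak{m}_x/\mathfrak{m}_x^2$ and hence on its dual $T_xX$. The coordinate ring of the affine space $T_xX$ is the symmetric algebra $\Sym(\mathfrak{m}_x/\mathfrak{m}_x^2)$, so to produce the map $\phi\colon X\to T_xX$ with $\phi(x)=0$ it suffices to produce a $G$-equivariant algebra homomorphism $\Sym(\mathfrak{m}_x/\mathfrak{m}_x^2)\to A$ whose restriction to the degree-one part lands in $\mathfrak{m}_x$.

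First I would use the linear reductivity of $G$ to split the $G$-equivariant surjection $\mathfrak{m}_x\twoheadrightarrow \mathfrak{m}_x/\mathfrak{m}_x^2$ of $G$-modules, obtaining a $G$-equivariant section $s\colon \mathfrak{m}_x/\mathfrak{m}_x^2\to \mathfrak{m}_x\subset A$. By the universal property of the symmetric algebra, $s$ extends uniquely to a $G$-equivariant algebra map $\phi^*\colon \Sym(\mathfrak{m}_x/\mathfrak{m}_x^2)\to A$, which defines $\phi$; since $s$ lands in $\mathfrak{m}_x$, the homomorphism $\phi^*$ carries the augmentation ideal into $\mathfrak{m}_x$, so $\phi(x)=0$. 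Next I would check that $\phi$ is \'etale at $x$: the induced map on cotangent spaces at $x$ is, by construction, the composite of $s$ with the projection $\mathfrak{m}_x\to \mathfrak{m}_x/\mathfrak{m}_x^2$, hence the identity, so $d\phi_x$ is an isomorphism. As $X$ is smooth and $\dim T_xX=\dim_x X$, it follows that $\phi$ is \'etale at $x$; the \'etale locus is open, and it is $G$-invariant because $\phi$ is equivariant.

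Finally, the orbit $Gx=\{x\}$ is closed and maps isomorphically to the closed orbit $\{0\}=G\cdot 0$ in $T_xX$, and $\phi$ is \'etale along this closed orbit. Luna's fundamental lemma~\cite[p.~94]{Lun:73} then provides a $G$-saturated affine open neighborhood $U$ of $x$ on which $\phi$ is \emph{strongly \'etale}: the square relating $U$, $T_xX$ and their GIT quotients $U\gitq G$ and $T_xX\gitq G$ is cartesian and both horizontal arrows are \'etale. The main obstacle is precisely the passage from ``\'etale at $x$'' to ``strongly \'etale on a $G$-saturated neighborhood'', which is the content of the cited fundamental lemma; everything preceding it reduces to the equivariant splitting afforded by linear reductivity together with the smoothness of $X$. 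Accordingly, I would present in full only this first part (the construction of $\phi$ and its \'etaleness at $x$) and cite~\cite{Lun:73} for the remainder.
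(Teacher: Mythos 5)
Your proposal is correct and follows essentially the same route as the paper: build a $G$-equivariant linear section of $\mathfrak{m}_x\twoheadrightarrow\mathfrak{m}_x/\mathfrak{m}_x^2$ using linear reductivity, extend it to an equivariant map $X\to T_xX$ that is \'etale at $x$, and invoke Luna's fundamental lemma for the strongly \'etale conclusion on a saturated neighborhood. The only (harmless) difference is that the paper first uses local finiteness of the $G$-action to pass to a finitely generated $G$-submodule $V\subset\mathfrak{m}_x$ before splitting, whereas you split the full surjection of rational $G$-modules directly, which is equally valid since the category of rational representations of a linearly reductive group is semisimple.
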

\begin{proof}
Let $\mathfrak{m}$ be the maximal ideal corresponding to $x$.
Since $x$ is $G$-fixed the quotient map $\mathfrak{m} \to \mathfrak{m}/\mathfrak{m}^2$
is a map of $G$-modules. By the local finiteness of group 
actions there is a finitely generated $G$-submodule $V \subset \mathfrak{m}$
such that the restriction $V \to \mathfrak{m}/\mathfrak{m}^2$ is surjective. 
Since $G$ is linearly reductive there is a summand $W \subset V$
such that $W \to \mathfrak{m}/\mathfrak{m}^2$ is an isomorphism of $G$-modules.
Since $W \subset A$ we obtain a $G$-equivariant morphism 
$X \to T_xX = \Spec ( \Sym (\mathfrak{m}/\mathfrak{m}^2) )$ which is \'etale at $x$.
Luna's fundamental lemma now gives an open saturated neighborhood $U$ of $x$
such that $U\to T_xX$ is strongly \'etale.
\end{proof}

\begin{remark}
Using Lemma~\ref{lem.luna} and arguing as in the proof of
Proposition~\ref{prop.repfixed}, we recover the result that $X^G$ is smooth
(Remark~\ref{rem.fixlocus.smooth}).
\end{remark}

\begin{proof}[Completion of the Proof of Theorem \ref{thm.fixed}]
  Every $G$-fixed point of $R_G(X,X^G)$ lies in the exceptional divisor
  $\PP(N_{X^G}X)$. To show that $R_G(X,X^G)^G= \emptyset$ we can work
  locally in a neighborhood of a point $x \in X^G$. Thus we may assume
  (Lemma~\ref{lem.luna})
  that there is a strongly \'etale morphism $X \to T_xX$ yielding a
  cartesian diagram
\[
\xymatrix{ X \ar[r]\ar[d] & T_x X\ar[d] \\
X\gitq G \ar[r] & T_xX\gitq G.
}
\]
Hence $R_G(X,X^G)$ can be identified with the pullback of $R_G\bigl(T_xX, (T_xX)^G\bigr)$
along the morphism $X\gitq G \to T_xX\gitq G$
(Remark~\ref{rem.reichstein.basechange}). By Proposition  \ref{prop.repfixed},
$R_G\bigl(T_xX,(T_xX)^G\bigr)^G = \emptyset$ so therefore 
$R_G(X,X^G)^G = \emptyset$ as well.
\end{proof}

\begin{example} \label{ex.false}
Note that the conclusion of Theorem~\ref{thm.fixed} is false without the assumption that
$X$ is smooth. Let $V$ be the 3-dimensional representation of $G ={\mathbb G}_m$ 
with weights $(-1,1,3)$.
The polynomial
$f= x_1x_3^2 + x_2^5$ is $G$-homogeneous of weight $5$, 
so the subvariety $X = V(f)$ is $G$-invariant. Since all weights
for the $G$-action are non-zero $X^G = (\A^3)^G = \{0\}$.

Let $\tilde{\A}^3$ be the blowup of the origin. The exceptional
divisor is $\Pro(V)$ and has three $G$-fixed points $P_0 = [0:0:1], P_1 = [0:1:0],
P_2 =[1:0:0]$.  The exceptional divisor of $\tilde{X}$ is the projectivized
tangent cone $\Pro(C_{\{0\}} X)$. Since $X= V(f)$ is a hypersurface and
$x_1x_3^2$
  is the sole term of lowest degree in $f$, we see that
$\Pro(C_{\{0\}}X)$ is the subscheme $V(x_1x_3^2) \subset \Pro(V)$.
This subvariety contains the 3 fixed points, so $\tilde{X}$ has 3 fixed points.

The saturation of $0$ in $X$ with respect to the $G$-action is 
$\left(X \cap V(x_1)\right) \cup \left(X \cap V(x_2,x_3)\right)$. 
The intersection of the exceptional divisor with the strict transform
of $X \cap V(x_1)$ is the projective subscheme $V(x_1, x_2^5)$ whose reduction
is $P_0$.

The intersection of the exceptional divisor with the strict transform
of $X \cap V(x_2,x_3)$ is the point $V(x_2,x_3) = P_2$.
Thus the strict transform of the saturation of $0$ in $X$ does not contain
all of the fixed points of $\tilde{X}$. Hence $R_G(X,X^G)^G \neq \emptyset$.

The exceptional divisor of the saturated blowup of $X$ in the origin is
$\Pro(C_{\{0\}}X)\smallsetminus \bigl(V(x_1)\cup V(x_2,x_3)\bigr)=V(x_3^2)\smallsetminus \bigl(V(x_1)\cup V(x_2)\bigr)$ which has no $G$-fixed points.
\end{example}

\begin{example} \label{ex.false2}
Consider the following variation of Example~\ref{ex.false}.  Let $V$ be the
5-dimensional representation of $G ={\mathbb G}_m$ with weights $(-1,1,3,2,7)$.
The polynomials $f= x_1x_3^2 + x_2^5$ and $g=x_1x_5 + x_4^3$ are
$G$-homogeneous of weights $5$ and $6$ so the subvariety $X = V(f,g)$ is
$G$-invariant. As before we blow up the origin and the exceptional divisor of
$\tilde{\A}^5$ is $\Pro(V)$ which has five $G$-fixed points. The
exceptional divisor of $\tilde{X}$ is $\Pro(C_{\{0\}} X)$ which is given by
$V(x_1x_3^2,x_1x_5)\subset \Pro(V)$. It contains the five fixed points of
$\Pro(V)$. The saturation of $0$ in $X$ is the union of $X\cap
V(x_1)=V(x_1,x_2^5,x_4^3)$ and $X\cap V(x_2,x_3,x_4,x_5)=V(x_2,x_3,x_4,x_5)$.

In particular, the exceptional divisor of $R_G(X,X^G)$ contains the closed subscheme
$V(x_1,x_3,x_5)=\Pro(W)$ where $W$ is the 2-dimensional representation with
weights $(1,2)$. But $\Pro(W)$ admits no good quotient by $G$ since the closure of the open orbit contains both $G$-fixed points. It follows that $R_G(X,X^G)$ does
not admit a good quotient.

The exceptional divisor of the saturated blowup of $X$ in the origin is
$\Pro(C_{\{0\}}X)\smallsetminus \bigl(V(x_1)\cup
V(x_2,x_3,x_4,x_5)\bigr)=V(x_3^2,x_5)\smallsetminus \bigl(V(x_1)\cup
V(x_2,x_4)\bigr)$ which has no $G$-fixed points and the saturated blowup
admits a good moduli space.
\end{example}

\begin{example} \label{ex.false3}
The assumption that $G$ is smooth and connected is needed in
Theorem~\ref{thm.fixed} and Proposition~\ref{prop.repfixed}. Indeed, for any
integer $r\geq 2$, let $G=\Gmu_r$ act on the standard $1$-dimensional
representation $V$. Then $R_G(V,V^G)\to V$ is an isomorphism.
\end{example}


\section{The proof of Theorem \ref{thm.main} in the smooth case}\label{sec.proof.smooth}

In this section we prove the main theorem when $\ix$ is smooth over a
base scheme $S$ and also prove that the
algorithm is functorial with respect to strong morphisms.

\subsection{Proof of Theorem \ref{thm.main} when $\ix$ is smooth over an algebraically closed field}
Let $\ix$ be a smooth stack with a stable good moduli space and let $\ie$
be an snc divisor (e.g., $\ie=\emptyset$).
Taking connected components, we may assume that $\ix$ is irreducible.
By Lemma \ref{lem.maxlocus}, for any stack $\ix$ the locus
$\ix^{\maxlocus}$ of points of $\ix$ with maximal
dimensional stabilizer is a closed subset of $|\ix|$. Moreover, if
$\ix$ is smooth, then Proposition \ref{prop.ixmax} implies that $\ix^{\maxlocus}$
with its reduced induced substack structure is also smooth.
When $\ix = [X/G]$ with $X$ smooth, $G$ linearly reductive and $X$ has a $G$-fixed point,
then $\ix^{\maxlocus} = [X^{G_0}/G]$ where $G_0$ is the reduced identity component
of $G$. For an arbitrary smooth stack $\ix$
with good moduli space $\bX$ the substack
$\ix^{\maxlocus}$ can be \'etale locally described as follows.

If $x$ is a closed point with stabilizer $G_x$, then
by \cite[Theorem 4.12]{AHR:15}
there is
a cartesian diagram of stacks and good moduli spaces
\[
\xymatrix{%
[U/G_x]\ar[r]\ar[d] & \ix\ar[d]^{\pi}\\
U\gitq G_x\ar[r] & \bX\ar@{}[ul]|\square
}
\]
where the horizontal maps are \'etale. In  this setup, if $\dim(G_x)$
is maximal, then the inverse image
of $\ix^{\maxlocus}$ in $[U/G_x]$ is $[U^{G_0}/G_x]$ where $G_0$ is the reduced identity component of $G_x$.
See Appendix \ref{app.fixed} for more details.

The proof of Theorem \ref{thm.main} proceeds by induction on the maximum
stabilizer dimension. First suppose that the maximum stabilizer dimension equals
the minimum stabilizer dimension. Then $\ix^s=\ix$ and $\ix$ is a gerbe over a
tame stack $\ix_\tame$ by Proposition \ref{prop.tamegerbe}. If $\ix$ is properly stable,
then every stabilizer has dimension zero and $\ix=\ix_\tame$ is a tame stack by
Proposition \ref{prop.tameisgood}. We have thus shown that the sequence of
length $0$, that is $\ix_n=\ix_0=\ix$, satisfies  conclusions
\itemref{item.mainthm.final.stable}--\itemref{item.mainthm.final.gerbe} of 
Theorem \ref{thm.main}.

If the maximum stabilizer dimension is greater than the minimum stabilizer
dimension, we let $\ix_0=\ix$, $\ic_0=\ix^\maxlocus$ and $f_1\colon
\ix_1=R(\ix_0,\ic_0)\to \ix_0$. The following Proposition shows that
the conclusions in Theorem \ref{thm.main} hold for $\ell=0$. In particular, $(\ix_1, \ie_1)$ satisfies the
hypothesis of Theorem~\ref{thm.main} and the maximal stabilizer dimension of
$\ix_1$ has dropped. By induction, we thus have $\ix_n\to \ldots \to \ix_1$
such that the conclusions also hold for $\ell=1,\dots,n$ and Theorem \ref{thm.main}
for $\ix$ follows.

\begin{proposition} \label{prop.inductionstep}
  Let $\ix$ be a smooth irreducible Artin stack with good moduli space morphism
$\pi\colon \ix \to \bX$ and let $\ie$ be an snc divisor on $\ix$.
Let $f\colon \ix' = R(\ix, \ix^\maxlocus)\to \ix$ be the Reichstein transform.
\begin{enumerate}
\myitemi{1a}\label{item.reichstein.center}
  $\ic:=\ix^\maxlocus$ is smooth and meets $\ie$ with normal crossings.
\myitemi{1b}\label{item.reichstein.smooth}
  $\ix'$ and $f^{-1}(\ic)$ are smooth and
  $\ie':=f^{-1}(\ic)\cup f^{-1}(\ie)$ is snc.
\myitemi{2a}\label{item.reichstein.stablegms}
The stack $\ix'$ has a good moduli space $\bX'$.
If $\pi$ stable (resp. properly stable), then so is
$\pi' \colon \ix' \to \bX'$.
\myitemi{2b}\label{item.reichstein.isolocus}
  $f$ induces an isomorphism $\ix'\smallsetminus f^{-1}(\ic)\to \ix\smallsetminus \pi^{-1}\bigl(\pi(\ic)\bigr)$. In particular, $f$ induces an isomorphism
$\ix'^s\smallsetminus f^{-1}(\ic)\to \ix^s$.
\myitemi{2c}\label{item.reichstein.gms-isolocus}
The induced morphism of good moduli spaces $\bX' \to \bX$
is 
projective
and an isomorphism over $\bX\smallsetminus \pi(\ic)$. In particular,
if $\ix^\maxlocus \neq \ix$ then it is an isomorphism over $\bX^s$.

\myitemi{3}\label{item.reichstein.stabdim-drops} Every point of $\ix'$
has a stabilizer of dimension strictly less than the maximum dimension of the
stabilizers of points of $\ix$. 
\end{enumerate}
\end{proposition}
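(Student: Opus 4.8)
The plan is to reduce everything to the general theory of saturated blowups together with the equivariant fixed‑point Theorem~\ref{thm.fixed}. The key opening move is to invoke Proposition~\ref{prop.reichstein.is.saturatedproj}: since $\ix$ is smooth and $\ic:=\ix^\maxlocus$ is smooth (Proposition~\ref{prop.ixmax}), we have $R(\ix,\ic)=\Bl^\pi_\ic\ix=\Proj^\pi_\ix\iA$ with $\iA=\bigoplus \iI^n$ and $\iI$ the ideal of $\ic$. With this identification in hand the good‑moduli‑space statements \itemref{item.reichstein.stablegms}--\itemref{item.reichstein.gms-isolocus} follow formally, and all the genuine work is concentrated in the stabilizer bound \itemref{item.reichstein.stabdim-drops}.

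For \itemref{item.reichstein.center} and \itemref{item.reichstein.smooth}, smoothness of $\ic$ is Proposition~\ref{prop.ixmax}, and since $\ix'=R(\ix,\ic)$ is an open substack of the blowup $\Bl_\ic\ix$ of a smooth stack along a smooth center, both $\ix'$ and its exceptional divisor $f^{-1}(\ic)$ are smooth. The normal‑crossings assertions are étale‑local on $\ix$, so by the local structure theorem of \cite{AHR:15} and the linearization Lemma~\ref{lem.luna} I may work in the model $[V/G_x]$ with $V$ a $G_x$‑representation and $\ic=[V^{G_0}/G_x]$ a linear subspace. There the claim reduces to a direct computation: the $G_x$‑invariant snc divisor $\ie$ can be put, in coordinates adapted to the isotypic decomposition $V=V^{G_0}\oplus V_{\mathrm{mov}}$, into a position where $V^{G_0}$, the components of $\ie$, and the exceptional divisor are simultaneously normal crossings. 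This is the only place the snc hypothesis on $\ie$ is used.

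For \itemref{item.reichstein.stablegms}, Proposition~\ref{prop.saturatedproj} gives that $\ix'\to \bX'=\Proj_\bX \pi_*\iA$ is a good moduli space morphism, and Proposition~\ref{prop.stability.saturatedproj}(2) (stable case, $\iA$ a Rees algebra), respectively (1) (properly stable case), shows $\pi'$ is stable, resp.\ properly stable. Part \itemref{item.reichstein.isolocus} is Proposition~\ref{prop.saturatedblowups}\itemref{item.saturatedblowup.dense} combined with \itemref{item.saturatedblowup.excdiv}, which identifies $f^{-1}(\ic)$ with the exceptional divisor; the statement on stable loci follows because $\ic\subset \ix_{>d}$ (as $\ix^\maxlocus\neq \ix$ and $\ix$ is irreducible force the maximal dimension $m$ to exceed the generic value $d$), so $\ix^s=\ix\smallsetminus\pi^{-1}(\pi(\ix_{>d}))\subset \ix\smallsetminus\pi^{-1}(\pi(\ic))$, and the last assertion of Proposition~\ref{prop.stability.saturatedproj} identifies $f^{-1}(\ix^s)$ with $\ix'^s\smallsetminus f^{-1}(\ic)$. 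Finally \itemref{item.reichstein.gms-isolocus} is Proposition~\ref{prop.saturatedblowups}\itemref{item.saturatedblowup.gms-blowup}: $\bX'=\Bl_{\pi_*(\iI^d)}\bX$ is a blowup, hence projective over $\bX$ and an isomorphism over $\bX\smallsetminus V(\pi_*\iI)=\bX\smallsetminus\pi(\ic)$, which contains $\bX^s$.

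The main obstacle is \itemref{item.reichstein.stabdim-drops}. By \itemref{item.reichstein.isolocus} the isomorphism $\ix'\smallsetminus f^{-1}(\ic)\cong \ix\smallsetminus\pi^{-1}(\pi(\ic))$ shows that off the exceptional divisor every point already has stabilizer dimension $<m$, since its image avoids $\ic=\ix^\maxlocus$; so only the exceptional divisor matters, and as the locus of stabilizer dimension $\geq m$ is closed it suffices to bound stabilizers at closed points. Around a closed point of $\ic$ the local structure theorem \cite{AHR:15} gives a cartesian étale chart $[U/G_x]\to\ix$ over $\bX$ in which $\ic$ pulls back to $[U^{G_0}/G_x]$; since Reichstein transforms commute with flat base change (Remark~\ref{rem.reichstein.basechange}), $\ix'$ pulls back to $[R_{G_x}(U,U^{G_0})/G_x]$. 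A point here has $G_x$‑stabilizer of dimension $m=\dim G_0$ precisely when its reduced identity component, being a connected dimension‑$m$ subgroup of $G_x$, equals $G_0$, i.e.\ when it is $G_0$‑fixed; so I must prove $R_{G_x}(U,U^{G_0})^{G_0}=\emptyset$. The one genuinely new ingredient is the passage from the connected $G_0$, to which Theorem~\ref{thm.fixed} applies directly, to the possibly disconnected $G_x$: because $G_0\subset G_x$, the good quotient by $G_x$ factors through that by $G_0$, whence $\sat_{G_0}(U^{G_0})\subset \sat_{G_x}(U^{G_0})$, and taking strict transforms yields $R_{G_x}(U,U^{G_0})\subset R_{G_0}(U,U^{G_0})$. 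Since Theorem~\ref{thm.fixed} gives $R_{G_0}(U,U^{G_0})^{G_0}=\emptyset$, the same holds for the smaller open substack, completing the bound and hence the induction.
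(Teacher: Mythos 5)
Your proposal is correct and follows essentially the same route as the paper: identify the Reichstein transform with the saturated blowup via Proposition~\ref{prop.reichstein.is.saturatedproj}, deduce \itemref{item.reichstein.stablegms}--\itemref{item.reichstein.gms-isolocus} from Propositions~\ref{prop.saturatedproj}, \ref{prop.saturatedblowups} and \ref{prop.stability.saturatedproj} together with $\ic\subset\ix\smallsetminus\ix^s$, and reduce \itemref{item.reichstein.stabdim-drops} to Theorem~\ref{thm.fixed} through the local structure theorem. The only (harmless) divergence is in the final step: where the paper proves the equality $R_{G_x}(U,U^{G_0})=R_{G_0}(U,U^{G_0})$ by showing the two saturations of a $G_x$-invariant subset agree as sets (using that $U\gitq G_0\to U\gitq G_x$ is a geometric quotient by the finite group $G_x/G_0$), you establish only the inclusion $R_{G_x}(U,U^{G_0})\subset R_{G_0}(U,U^{G_0})$ from the trivial containment $\sat_{G_0}(U^{G_0})\subset\sat_{G_x}(U^{G_0})$ --- which indeed suffices to conclude that the $G_0$-fixed locus is empty.
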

\begin{proof}
  Assertion \itemref{item.reichstein.center} is Proposition \ref{prop.ixmax}.
  The Reichstein transform $\ix'$ is an open substack of $\Bl_{\ix^{\maxlocus}}
  \ix$. Assertion \itemref{item.reichstein.smooth} thus follows from
  the corresponding properties of $\Bl_{\ix^{\maxlocus}}\ix$.

  Since $\ix$ and $\ix^{\maxlocus}$ are smooth, Proposition \ref{prop.reichstein.is.saturatedproj} implies that $R(\ix,\ix^{\maxlocus})$ is the saturated
  blowup of $\ix$ along $\ix^{\maxlocus}$. Assertions \itemref{item.reichstein.stablegms}--\itemref{item.reichstein.gms-isolocus} then
  follow from the properties of the saturated blowup (Propositions \ref{prop.saturatedblowups} and \ref{prop.stability.saturatedproj}) and the fact that
  $\ix^{\maxlocus} \subset \ix\smallsetminus \ix^s$ by Proposition \ref{prop.stable}.

  We now prove assertion~\itemref{item.reichstein.stabdim-drops}.
  By the local structure theorem  \cite[Theorem 4.12]{AHR:15} we may assume
  $\ix = [U/G]$ where $U$ is a smooth affine scheme with a $G$-fixed point
  and $G$
  is a linearly reductive group (Remark~\ref{rem.smoothness}).
Let $G_0$ be the reduced identity component of $G$. Then
$[U/G]^\maxlocus = [U^{G_0}/G]$. To complete the proof we need to show that
$R_{G}(U,U^{G_0})$ has no $G_0$-fixed point. By Theorem \ref{thm.fixed}
we know that $R_{G_0}(U,U^{G_0})$ has no $G_0$-fixed points. We will prove~\itemref{item.reichstein.stabdim-drops} by showing that $R_{G}(U, U^{G_0}) = R_{G_0}(U,U^{G_0})$ as open subschemes of the
blowup of $U$ along $U^{G_0}$.

Consider the maps of quotients $U \stackrel{\pi_0} \to U \gitq G_0 \stackrel{q}
\to U \gitq G$. If $U = \Spec A$,
then these maps are induced by the inclusions of rings
\[
A^{G}=(A^{G_0})^{(G/G_0)} \hookrightarrow A^{G_0} \hookrightarrow A.
\]
Since the quotient group $G/G_0$
is a finite $k$-group scheme,
$U\gitq G_0 = \Spec A^{G_0} \to U\gitq G = \Spec (A^{G_0})^{(G/G_0)}$
is a geometric quotient.

If
$C \subset U$ is a $G$-invariant closed subset of $U$, then its image in $U\gitq G_0$ is $(G/G_0)$-invariant, so it is saturated with respect to the quotient
map $U\gitq G_0 \to U \gitq  G$. Hence, as closed subsets
of $U$, the saturations of $C$ with respect to either the quotient map $U \to U \gitq
G_0$ or to $U \to U \gitq G$ are the
same\footnote{The saturations with respect to the quotient maps come with natural scheme structures which are not the same.
  If $I \subset A$ is the ideal defining $C$ in $U$, then the saturation
  of $C$ with respect to the quotient map $U \to U \gitq G_0$ is the ideal
  $I^{G_0}A$ while the ideal defining the saturation of $C$ with respect to the
  quotient map $U \to U \gitq G$ is the ideal $I^{G}A$. While $I^{G}
  A \subset I^{G_0}A$, these ideals need not be equal.}.
It follows that if $C \subset U$
is $G$-invariant, then $R_{G}(U,C)$ and $R_{G_0}(U,C)$ define the same open subset of the blowup of $U$ along $C$. Since $U^{G_0}$ is $G$-invariant
we conclude that $R_{G_0} (U,U^{G_0}) = R_{G}(U, U^{G_0})$ as open subschemes of the blowup.
\end{proof}

\begin{remark}
When $k$ is merely perfect, then it follows by descent that $\ix^\maxlocus$
with its reduced induced substack structure is smooth. The conclusions of the
theorem can then be verified in this case after passing to the algebraic closure of $k$.
\end{remark}

\begin{remark}\label{rem.smoothness}
Let $\ix=[U/G]$ be a smooth Artin stack with good moduli space $\bX$. If $G$ is
not smooth, then it is not automatic that $U$ is smooth. If $u\in U$ is a
closed point fixed by $G$, then the fiber $U\times_\ix BG=\{ u \}$ is regular
and $U\times_\ix BG\to U$ is a regular closed immersion since $\ix$ is
smooth. It follows that $U$ is smooth over an open $G$-invariant neighborhood
of $u$. After replacing $\ix$ with an open saturated neighborhood of the image
of $u$ \cite[Lemma 3.1]{AHR:15}, we can thus assume that $U$ is smooth.
\end{remark}

When $\pi \colon \ix \to \bX$ is a stable good moduli space
morphism, the morphism $\bX'\to \bX$ of Proposition \ref{prop.inductionstep}
is an isomorphism over $\bX^s$, hence birational. When $\pi$ is not stable,
it may happen that the saturation of
$\ix^\maxlocus$ equals $\ix$ and thus that $\ix'=\emptyset$. The following
examples illustrate this.
\begin{example}
Let $\G_m$ act on $X=\A^1$ with weight $1$. The structure map $\A^1 \to \Spec k$
is a good quotient, so $\Spec k$ is the good moduli space of $\ix = [\A^1/\G_m]$.
The stabilizer of any point of $\A^1 \smallsetminus \{0\}$ is trivial, so 
$\ix^\maxlocus= [\{0\}/\G_m]$ and the saturation of $\ix^\maxlocus$ is all of $\ix$.
Hence $R(\ix, \ix^\maxlocus) = \emptyset$.
\end{example}
\begin{example} Here is a non-toric example. 
Let $V=\mathfrak{sl}_2$ be the adjoint representation
  of $G=\SL_2( \CC)$. Explicitly, $V$ can be identified with the vector space
  of traceless $2 \times 2$ matrices with $\SL_2$-action given by
  conjugation. Let $V^\reg \subset V$ be the open set corresponding
  to matrices with non-zero determinant
  and set $X = V^\reg \times \A^2$.
  Let $\ix = [X/G]$ where $G$ acts by conjugation on the first
  factor and translation on the second factor. The map of affine varieties $X
  \to \A^1\smallsetminus \{0\}$ given by $(A,v) \mapsto \det A$ is a
  good quotient, so $\pi \colon \ix \to \A^1 \smallsetminus \{0\}$ is
  a good moduli space morphism. However, the morphism $\pi$ is not
  stable because the only closed orbits are the orbits of pairs
  $(A,0)$.

The stabilizer of a point $(A,v)$ with $v \neq 0$ is trivial and the stabilizer
of $(A,0)$ is conjugate to $T = \diag \left(t \;\; t^{-1}\right)$ and
$\ix^\maxlocus = [(V^\reg \times \{0\})/G]$. 
Thus, $\pi^{-1}(\pi(\ix^\maxlocus)) =\ix$ and $R(\ix, \ix^\maxlocus) = \emptyset$.
\end{example}

\subsection{Completion of the proof when $\ix$ is smooth over a base $S$}\label{ssec.smooth.over.base}
Suppose that $\ix$ is smooth over an algebraic space $S$.
We may assume that $\ix$ is connected.
Let $n$ be the maximum dimension of a stabilizer in $\ix$.
The closed substack $\ix^\maxlocus$ is smooth over $S$ and
has normal crossings with any snc divisor
(Proposition~\ref{prop.ixmax-singular}).
If $\ix^\maxlocus=\ix$, then we are done. Otherwise $\ix^\maxlocus$
does not intersect
the stable locus
and we let $\ix'$ be the saturated blowup of $\ix$ in $\ix^\maxlocus$.

Since $\ix^\maxlocus$ is smooth, the center
$\ix^\maxlocus\subset \ix$ is a transversal regular embedding relative to $S$.
The usual blowup $\Bl_{\ix^\maxlocus} \ix$ is thus smooth over $S$ and
commutes with arbitrary base change $T\to S$ \cite[Proposition 19.4.6]{EGA4}.
It follows that $\ix'\to S$ is smooth and that $\ix'\times_S T =
\Bl^\pi_{\ix^\maxlocus\times_S T} (\ix\times_S T)$ by
Proposition~\ref{prop.saturatedproj-and-basechange}\itemref{item.saturatedproj.basechange}. From the definition of $\ix^\maxlocus$ we have that
$\ix^\maxlocus\times_S T = (\ix\times_S T)^\maxlocus$ if $\ix\times_S T$ has
maximal stabilizer dimension $n$ and $\ix^\maxlocus\times_S T=\emptyset$
otherwise.
The formation of $\ix'$
thus commutes with arbitrary base change $T\to S$. By definition, we also have
that the stable locus satisfies $\ix^s\times_S T = (\ix\times_S T)^s$.

To prove that the maximum dimension of a
stabilizer in $\ix'$ is strictly smaller than $n$, we may thus replace $S$ with
a geometric point $\overline{s}=\Spec \overline{k}$ and $\ix$ with
a connected component of $\ix_{\overline{s}}$. If $\ix$ has no stabilizer
of dimension $n$, then $\ix'=\ix$ and the result is trivial.
If $\ix$ has a stabilizer of dimension $n$, then $\ix^\maxlocus\neq \ix$ because
$\ix^\maxlocus=\ix$ would imply that $\ix=\ix^s$ but
$\ix^\maxlocus\cap \ix^s=\emptyset$.
The result now follows from
Proposition~\ref{prop.inductionstep}.

\begin{remark}
It follows from the proof that when $\ix\to S$ is smooth, the algorithm
of Theorem~\ref{thm.main} commutes with arbitrary base change $T\to S$.
It also follows that if $\ix$ is stable then $\ix\times_S T$ is stable.
\end{remark}

\begin{remark}\label{rem.algorithm.notstable}
  When $\ix$ is smooth over $S$ but $\ix \to \bX$ is not stable, our inductive 
  proof using Proposition \ref{prop.inductionstep} gives a sequence of birational saturated
  blowups $\ix_n \to \ldots \to \ix$ as in Theorem 2.11 but where the final stack
  $\ix_n$ is not a gerbe over a tame stack. Instead $(\ix_n)^{\maxlocus}$
is a gerbe over a tame stack with coarse moduli space $\bX_n$. The algorithm terminates when $\pi_n(\ix_n^{\maxlocus}) = \bX_n$.
\end{remark}

\subsection{Functoriality for strong morphisms}
Let $\ix$ and $\iy$ be Artin stacks with good 
moduli space morphisms, $\pi_\iy \colon \iy \to \bY$, and
$\pi_\ix \colon \ix \to \bX$. Let $f \colon \iy \to \ix$ be a morphism
and let $g \colon \bY \to \bX$ be the induced morphism of good moduli spaces.
\begin{definition}
  We say the morphism $f$ is {\em strong} if
the diagram
$$\xymatrix{\iy \ar[r]^f \ar[d]^{\pi_\iy} & \ix \ar[d]^{\pi_\ix}\\
\bY \ar[r]^g & \bX}$$
is cartesian.
\end{definition}
Note that a strong morphism is representable and stabilizer-preserving.
We thus have an equality
$\iy^\maxlocus=f^{-1}(\ix^\maxlocus)$ of closed substacks by
Proposition~\ref{prop.ixmax-singular} if $\iy$ and $\ix$ have the same
maximal stabilizer dimension.
A sharp criterion for when a morphism is strong can be found in~\cite{Rydh:15}.

\begin{theorem} \label{thm.reichsteinfunctorial}
  Let $f \colon \iy \to \ix$ be a strong morphism of smooth stacks over an
  algebraic space $S$ with
stable good moduli space morphisms $\iy \to \bY$ and
$\ix \to \bX$. Let $\iy'$ and $\ix'$ be the stacks
produced by Theorem \ref{thm.main}. Then there is a natural morphism 
$f' \colon \iy' \to \ix'$ such that the diagram
\vspace{-3mm}
\[
\xymatrix{\iy' \ar[d] \ar[r]^{f'} & \ix' \ar[d]\\
\iy \ar[r]^f & \ix}
\]
is cartesian.
\end{theorem}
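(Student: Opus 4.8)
The plan is to prove the statement one Reichstein transform at a time and then iterate. Write the two canonical sequences of Theorem~\ref{thm.main} as $\ix_n\to\dots\to\ix_0=\ix$ and $\iy_n\to\dots\to\iy_0=\iy$, with good moduli spaces $\bX_\ell$, $\bY_\ell$. I will show by induction on $\ell$ that $\iy_\ell=\ix_\ell\times_\ix\iy$ and that the projection $f_\ell\colon\iy_\ell\to\ix_\ell$ is again strong; taking $\ell=n$ then gives the asserted cartesian square with $f'=f_n$. The base case $\ell=0$ is the hypothesis, and since a strong morphism is stabilizer-preserving the two sequences have the same length. For the inductive step assume $f_\ell$ is strong, set $\ic=\ix_\ell^\maxlocus$, and recall that strongness gives the equality of closed substacks $\iy_\ell^\maxlocus=f_\ell^{-1}(\ic)$; both centers are smooth by Proposition~\ref{prop.ixmax} and the stacks $\ix_\ell,\iy_\ell$ are smooth by Theorem~\ref{thm.main}. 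Writing $\iI$ for the ideal of $\ic$ and $\iI'=\iI\cdot\cO_{\iy_\ell}$ for that of $\iy_\ell^\maxlocus$, the goal of the step is the identity $R(\iy_\ell,\iy_\ell^\maxlocus)=R(\ix_\ell,\ic)\times_{\ix_\ell}\iy_\ell$.

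To prove this I would pass from Reichstein transforms to saturated $\Proj$. Since the stacks and centers are smooth, Proposition~\ref{prop.reichstein.is.saturatedproj} identifies $R(\ix_\ell,\ic)=\Proj^{\pi_\ell}_{\ix_\ell}\iA$ with $\iA=\bigoplus\iI^n$, and likewise $R(\iy_\ell,\iy_\ell^\maxlocus)=\Proj^{\pi'_\ell}_{\iy_\ell}\iA'$ with $\iA'=\bigoplus\iI'^n$. As $f_\ell$ is strong it is the base change of $g_\ell\colon\bY_\ell\to\bX_\ell$ along the good moduli space morphism, so Proposition~\ref{prop.saturatedproj-and-basechange}\itemref{item.saturatedproj.basechange} applies at the level of good moduli spaces and yields
\[
\Proj^{\pi'_\ell}_{\iy_\ell} f_\ell^*\iA=\Proj^{\pi_\ell}_{\ix_\ell}\iA\times_{\ix_\ell}\iy_\ell=R(\ix_\ell,\ic)\times_{\ix_\ell}\iy_\ell.
\]
There is a natural surjection of graded algebras $f_\ell^*\iA\twoheadrightarrow\iA'$, so Proposition~\ref{prop.saturatedproj-and-basechange}\itemref{item.saturatedproj.closed} realizes $\Proj^{\pi'_\ell}_{\iy_\ell}\iA'$ as $\Proj^{\pi'_\ell}_{\iy_\ell}f_\ell^*\iA\times_{\Proj_{\iy_\ell}f_\ell^*\iA}\Proj_{\iy_\ell}\iA'$. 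It is therefore enough to show the \emph{ordinary} Projs coincide, i.e.\ that the ordinary blowup commutes with this base change, $\Bl_{\iy_\ell^\maxlocus}\iy_\ell=\Bl_\ic\ix_\ell\times_{\ix_\ell}\iy_\ell$; for then the fibre product collapses and the displayed identity follows.

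This last point is the heart of the matter, and the main obstacle, precisely because $g_\ell$ need not be flat, so the general base-change statement for (saturated) blowups only produces a closed immersion. I would resolve it with the standard fact that the ordinary blowup of a regularly embedded center commutes with a base change that is Tor-independent, which here amounts to checking that $\iy_\ell^\maxlocus=f_\ell^{-1}(\ic)$ is a regular immersion of the \emph{same} codimension as $\ic\subset\ix_\ell$. To verify the codimensions agree I would work in the étale-local model $\ix_\ell=[X/H]$ of \cite{AHR:15}, with $H$ the linearly reductive stabilizer of a closed point and $\bX_\ell=X\gitq H$; strongness of $f_\ell$ then gives $\iy_\ell=[W/H]$ with $W=X\times_{X\gitq H}\bY_\ell$, and $\ic=[X^{H_0}/H]$, $\iy_\ell^\maxlocus=[W^{H_0}/H]$ for $H_0$ the reduced identity component. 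Since $H_0$ acts trivially on $\bX_\ell=X\gitq H$, it acts trivially on the $\bY_\ell$-directions and the base directions of $W$; hence at a fixed point the nontrivial-weight part of $T W$ equals that of $T X$, and for the linearly reductive group $H_0$ the normal bundle of a fixed locus is exactly this nontrivial-weight part. Thus the normal bundle of $W^{H_0}\subset W$ has the same rank $c$ as that of $X^{H_0}\subset X$, and by smoothness of $\iy_\ell$ this rank is the codimension of $\iy_\ell^\maxlocus$, proving the claim.

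Granting the blowup base change, the argument of the previous paragraph gives $\iy_{\ell+1}=\ix_{\ell+1}\times_{\ix_\ell}\iy_\ell=\ix_{\ell+1}\times_\ix\iy$. It then remains to confirm that the induced $f_{\ell+1}\colon\iy_{\ell+1}\to\ix_{\ell+1}$ is strong, so that the induction continues. Setting $\bY_{\ell+1}:=\bX_{\ell+1}\times_{\bX_\ell}\bY_\ell$, one has $\iy_{\ell+1}=\ix_{\ell+1}\times_{\bX_{\ell+1}}\bY_{\ell+1}$, and since good moduli space morphisms are stable under base change \cite[Proposition 4.9]{Alp:13}, the morphism $\iy_{\ell+1}\to\bY_{\ell+1}$ is a good moduli space morphism; by uniqueness $\bY_{\ell+1}$ is the good moduli space of $\iy_{\ell+1}$ and the defining square is cartesian, so $f_{\ell+1}$ is strong. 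This closes the induction, and composing the cartesian squares produces the cartesian diagram of the theorem with $\iy'=\iy_n$ and $\ix'=\ix_n$.
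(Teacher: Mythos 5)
Your argument is correct in substance, but the key step is handled quite differently from the paper, so it is worth comparing. The paper also reduces to a single Reichstein transform and to the \'etale-local model of \cite{AHR:15}, but it then factors $\bY\to\bX$ locally as a closed immersion into $\bX\times\A^n$ followed by a smooth projection, reduces to the case where $f$ is a strong closed immersion, and linearizes via Lemma~\ref{lem.luna} to obtain $T_xX=T_yY\times N_y$ with the stabilizer acting trivially on $N_y$; the cartesian property is then immediate because the center, its saturation, the blowup and the deleted strict transform are all literally products with $N_y$. You instead convert the Reichstein transform into a saturated blowup (Proposition~\ref{prop.reichstein.is.saturatedproj}), dispose of the saturation part via the base-change statement for saturated Proj (Proposition~\ref{prop.saturatedproj-and-basechange}\itemref{item.saturatedproj.basechange}) --- which applies exactly because strongness exhibits $\iy_\ell$ as a base change along the good moduli space --- and reduce everything to the ordinary blowup commuting with a non-flat base change, which you obtain from Tor-independence after checking that the two centers are regular immersions of the same codimension. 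Your codimension count via the non-trivial weight part of the tangent space is correct and is essentially the same local computation the paper performs in its linearized model. Your route isolates precisely where smoothness enters (equality of codimensions) and leans on the already-established formal properties of saturated Proj; the paper's route is more geometric and never needs to invoke Tor-independence or base change of blowups along regular immersions.

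Two caveats. First, the claim that ``the two sequences have the same length'' is not automatic: $\iy$ may entirely miss $\pi^{-1}\bigl(\pi(\ix_\ell^\maxlocus)\bigr)$, in which case $f_\ell^{-1}(\ic)=\emptyset$ while $\iy_\ell^\maxlocus\neq\emptyset$, so the $\ell$-th step of the $\ix$-sequence pulls back to the identity on $\iy_\ell$ and the two algorithms get out of sync. The induction should be phrased as saying that the pullback of the $\ix$-sequence is the $\iy$-sequence with some trivial steps inserted (the paper is equally terse on this point, working only at points of $\iy^\maxlocus$). Second, in positive characteristic the chart $W$ in $\iy_\ell=[W/H]$ need not be smooth a priori, since $H$ need not be smooth; before computing normal bundles of $W^{H_0}$ inside $W$ one must argue, as the paper does for its $Y$, that $W$ is smooth in a saturated neighborhood of the relevant fixed point. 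Neither issue affects the viability of your approach.
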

\begin{proof*}
  We may assume that $\ix$ and $\iy$ are connected. Then $\ix'$ and $\iy'$ are
  produced by taking repeated saturated blowups along $\ix^\maxlocus$ and
  $\iy^\maxlocus$ respectively. If $f^{-1}(\ix^\maxlocus)=\emptyset$, then
  $\iy\times_\ix \Bl^{\pi_\ix}_{\ix^\maxlocus}\ix=\iy$. If $f^{-1}(\ix^\maxlocus)\neq
  \emptyset$, then the maximal stabilizer dimensions of $\ix$ and $\iy$
  coincide and $\iy^\maxlocus=f^{-1}(\ix^{\maxlocus})$ since $f$ is strong.
  Hence by Proposition~\ref{prop.saturatedblowups-and-basechange}\itemref{item.saturatedblowup.basechange},
  we have a closed
  immersion $i\colon \Bl^{\pi_\iy}_{\iy^\maxlocus}\iy\to
  \Bl^{\pi_\ix}_{\ix^\maxlocus}\ix\times_\ix \iy$.

  As we saw in the proof of the main theorem
  (Section~\ref{ssec.smooth.over.base}), both saturated blowups commute with
  passage to fibers over $S$. By the fiberwise criterion of flatness, it is
  thus enough to prove that $i$ is an isomorphism after passing to
  fibers. This is the content of the following proposition.
\end{proof*}

\begin{proposition}
  Let $ f \colon \iy \to \ix$ be a strong morphism of smooth stacks defined over a field $k$.
   Then there is a natural morphism
  $f'\colon R(\iy, f^{-1}(\ix^{\maxlocus}))
 \to R(\ix,\ix^{\maxlocus})$
such that the diagram
\vspace{-3mm}
\begin{equation}\label{diag.functreich}
\vcenter{%
\xymatrix{
R(\iy, f^{-1}(\ix^{\maxlocus})) \ar[d] \ar[r]^-{f'} & R(\ix, \ix^{\maxlocus})\ar[d]\\
\iy \ar[r]^f & \ix
}}
\end{equation}
is cartesian.
\end{proposition}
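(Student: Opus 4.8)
The plan is to reduce both Reichstein transforms to saturated blowups, manufacture $f'$ from the universal base-change closed immersion of Proposition~\ref{prop.saturatedblowups-and-basechange}, and then promote that closed immersion to an isomorphism using smoothness. Since $\ix$, $\iy$ and (by Proposition~\ref{prop.ixmax}) their maximal loci $\ix^\maxlocus$, $\iy^\maxlocus$ are all smooth, Proposition~\ref{prop.reichstein.is.saturatedproj} identifies $R(\ix,\ix^\maxlocus)=\Bl^{\pi_\ix}_{\ix^\maxlocus}\ix$ and $R(\iy,\iy^\maxlocus)=\Bl^{\pi_\iy}_{\iy^\maxlocus}\iy$. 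Because $f$ is strong it is representable and stabilizer-preserving, so $\iy^\maxlocus=f^{-1}(\ix^\maxlocus)=\ix^\maxlocus\times_\ix\iy$ as closed substacks (Proposition~\ref{prop.ixmax-singular}), and the defining cartesian square exhibits $g\colon\bY\to\bX$ as the base change realizing $\iy=\ix\times_\bX\bY$.

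This is exactly the setting of Proposition~\ref{prop.saturatedblowups-and-basechange}~\itemref{item.saturatedblowup.basechange} with base morphism $g$ on good moduli spaces, so I obtain a closed immersion
\[
R(\iy,\iy^\maxlocus)=\Bl^{\pi_\iy}_{\iy^\maxlocus}\iy\hookrightarrow \Bl^{\pi_\ix}_{\ix^\maxlocus}\ix\times_\ix\iy=R(\ix,\ix^\maxlocus)\times_\ix\iy
\]
onto the strict transform of $\iy\to\ix$. Composing with the first projection defines the natural morphism $f'$ and makes diagram~\eqref{diag.functreich} commute; the square is cartesian \emph{precisely} when this closed immersion is an isomorphism. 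The main obstacle is that $g$ need not be flat, so the isomorphism clause of Proposition~\ref{prop.saturatedblowups-and-basechange}~\itemref{item.saturatedblowup.basechange} does not apply directly; the real content is to remove extraneous components supported over the saturation of the center.

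I would dispatch this by a transversality argument in a local model. By \cite[Theorem 2.9]{AHR:15} I may work étale-locally on $\bX$ and assume $\ix=[U/G]$ with $U$ smooth affine and $G$ linearly reductive; strongness then gives $\iy=[V/G]$ with $V=U\times_{U\gitq G}S$ and $S=(U\gitq G)\times_\bX\bY$, while $\ix^\maxlocus=[U^{G_0}/G]$ and $\iy^\maxlocus=[V^{G_0}/G]$ for $G_0$ the reduced identity component. Since $G_0$ acts trivially on $U\gitq G$ and on $S$, at a fixed point $(u,s)\in V^{G_0}$ the tangent space $T_{(u,s)}V=T_uU\times_{T_wW}T_sS$ (tangent spaces always commute with fibre products) has nonzero-weight part equal to $(T_uU)_{\neq 0}$, because $T_wW$ and $T_sS$ carry the trivial $G_0$-representation. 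As the normal bundle of the fixed locus of a smooth $G_0$-scheme is the nonzero-weight part of the restricted tangent bundle, the conormal bundle of $V^{G_0}$ in $V$ is the pullback of that of $U^{G_0}$ in $U$; in particular $V^{G_0}\hookrightarrow V$ is a regular immersion of the \emph{same} codimension as $U^{G_0}\hookrightarrow U$. Equal codimension of regular immersions is exactly Tor-independence, so the ordinary blowup commutes with the base change, $\Bl_{V^{G_0}}V=\Bl_{U^{G_0}}U\times_U V$, with no extraneous components. This intrinsic codimension computation, which rests on smoothness of both $\iy$ and $\iy^\maxlocus$, is the step I expect to require the most care.

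Finally I would descend this to the saturated (Reichstein) transforms. Good moduli spaces commute with base change \cite[Proposition 4.9(i)]{Alp:13}, so $V\gitq G=S$ and $V\to S$ is the pullback of $U\to U\gitq G$; using $(\pi_\iy)_*f^*=g^*(\pi_\ix)_*$ for the cartesian square (\cite[Proposition~4.7]{Alp:13}, as in Proposition~\ref{prop.saturatedproj-and-basechange}~\itemref{item.saturatedproj.basechange}), the saturation $\pi_\iy^{-1}(\pi_\iy(\iy^\maxlocus))$ is the $f$-preimage of $\pi_\ix^{-1}(\pi_\ix(\ix^\maxlocus))$. Hence the strict transform of the saturation of $\iy^\maxlocus$ and the pullback of the strict transform of the saturation of $\ix^\maxlocus$ are the schematic closures of the \emph{same} schematically dense open (Proposition~\ref{prop.saturatedblowups}~\itemref{item.saturatedblowup.dense}) inside the now base-change-compatible blowup, so they agree. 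Removing it yields $R(\iy,\iy^\maxlocus)=R(\ix,\ix^\maxlocus)\times_\ix\iy$; equivalently, the closed immersion above is an isomorphism and $f'$ is the required cartesian morphism. This promotes the flat base-change identity of Remark~\ref{rem.reichstein.basechange} to arbitrary strong morphisms of smooth stacks.
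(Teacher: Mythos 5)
Your overall architecture is sound and genuinely different from the paper's. The paper factors $f$ \'etale-locally as a closed immersion followed by a smooth projection, disposes of the projection by flat base change (Remark~\ref{rem.reichstein.basechange}), and then uses Luna linearization (Lemma~\ref{lem.luna}) to reduce the closed immersion to the literal product $T_x=T_y\times N_y$ with trivial stabilizer action on $N_y$; after that, the center, its saturation and the blowup are all products with $N_y$ and the cartesian property is immediate. You instead keep $f$ intact, prove that the smooth center pulls back Tor-independently by comparing normal bundles of the $G_0$-fixed loci --- the computation $(T_{(u,s)}V)_{\neq 0}=(T_uU)_{\neq 0}$, which is where strongness enters, is correct and is essentially the same linear algebra the paper performs after linearizing --- and then appeal to the base-change properties of saturated Proj. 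Your route buys a cleaner conceptual statement (the Rees algebra of the center pulls back along the non-flat base change, so the ordinary blowup already commutes with it); the paper's buys a shorter endgame, because the product structure makes the saturation and the strict transform trivially compatible.

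Two points need repair. First, in positive characteristic $G$ need not be smooth, so the atlas $V$ of $\iy=[V/G]$ need not be smooth even though $\iy$ is, and your normal-bundle argument uses smoothness of $V$; the paper fixes this by noting that $V\times_\iy BG\to V$ is a regular closed immersion of a regular point, hence $V$ is smooth in a saturated neighborhood, and shrinking. Second, and more seriously, your final descent step asserts that the \emph{pullback} of the strict transform of the saturation is ``the schematic closure of the same schematically dense open.'' That is exactly the no-extraneous-components statement you are trying to prove: the pullback of a schematic closure contains, but need not equal, the schematic closure of the pullback, and the saturation is in general singular, so your Tor-independence for the smooth center says nothing about it. As written this only yields the inclusion $R(\iy,\iy^\maxlocus)\supset R(\ix,\ix^\maxlocus)\times_\ix\iy$. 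The gap is, however, closed by tools you already cite: once $f^*\iA\to\iA'$ is an isomorphism for the Rees algebras $\iA$, $\iA'$ of $\ix^\maxlocus$, $\iy^\maxlocus$, Proposition~\ref{prop.saturatedproj-and-basechange}~\itemref{item.saturatedproj.basechange} gives $\Bl^{\pi_\iy}_{\iy^\maxlocus}\iy=\Proj^{\pi_\iy}_\iy f^*\iA=\Bl^{\pi_\ix}_{\ix^\maxlocus}\ix\times_\ix\iy$ directly, and Proposition~\ref{prop.reichstein.is.saturatedproj} converts this into the statement about Reichstein transforms; you should finish that way rather than via strict transforms of saturations.
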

\begin{proof}
We will prove that
$(\Bl_{\ix^{\maxlocus}}\ix)\times_\ix \iy=\Bl_{f^{-1}(\ix^{\maxlocus})}\iy$ and that
the open subsets $R(\ix,\ix^{\maxlocus})\times_\ix \iy$
and $R(\iy,f^{-1}(\ix^{\maxlocus}))$ coincide. This gives a natural morphism
$f'$ such that \eqref{diag.functreich} is cartesian. These claims can be
verified \'etale or flat locally on $\bX$ and $\bY$ at points of
$\pi_\iy(f^{-1}(\ix^\maxlocus))$ and we may assume that $k$ is algebraically closed. Let $x\in |\ix^\maxlocus|$ and $y\in f^{-1}(x)$.

Since $\bY \to \bX$ is of finite type we can, locally around $\pi_\iy(y)$,
factor it 
as $\bY \hookrightarrow \bX \times \A^n \to \bX$ where the first map is a closed immersion and the second map is the smooth projection.
By base change, this gives a local factorization of the morphism $f$ as 
$\iy \stackrel{i} \hookrightarrow \ix \times \A^n \stackrel{p} \to \ix$.

Since $\ix \times \A^n \to \ix$ is flat it follows from Remark \ref{rem.reichstein.basechange}
that $R(\ix \times \A^n, \ix^{\maxlocus} \times \A^n) = R(\ix, \ix^{\maxlocus})
\times \A^n$.
We are therefore reduced to the case that the map $f$ is a closed immersion.
Since $\ix$ and $\iy$ are smooth, $f$ is necessarily a regular embedding.

We can apply Theorem \cite[Theorem 4.12]{AHR:15} to reduce to
the case that $\ix=[X/G]$ where $G=G_y=G_x$ is a linearly reductive group and
$X$ is an affine scheme. Let $Y=X\times_\ix \iy$. Then $Y\to X$ is a
regular closed immersion and $\iy=[Y/G]$. After passing to an open neighborhood
of $\pi_\iy(y)$ we can assume that $X$ and $Y$ are smooth (Remark~\ref{rem.smoothness}).

Since $T_xX=T_yY\times N_y(Y/X)$, we can slightly modify Lemma
\ref{lem.luna} to obtain the following commutative diagram
(after further shrinking of $\ix$) where the vertical maps are strongly
\'etale and the horizontal maps are strong closed immersions:
\[
\xymatrix{\iy \ar[d] \ar[r]^{f} & \ix \ar[d]\\
[T_y / G] \ar[r] & [T_x / G]. }
\]
Since $f$ is strong, the action of the stabilizer
$G$ is trivial on the normal space $N_y$. Thus
$(T_x)^\maxlocus=(T_y)^\maxlocus\times N_y$ and $T_x \gitq G = T_y \gitq G
\times N_y$. The result is now immediate.
\end{proof}
\section{Corollaries of Theorem \ref{thm.main} in the smooth case}
\subsection{Reduction to quotient stacks}
Suppose that $\ix$ is a smooth Artin stack over a field $k$ such
that the good moduli space morphism $\pi \colon \ix \to \bX$ is properly stable.
The end result of our canonical reduction of stabilizers (Theorem~\ref{thm.main})
is a smooth tame stack $\ix_n$. 
\begin{proposition} \label{prop.quotientstack}
  Let $\ix$ be a smooth Artin stack with properly stable good moduli space.
  Suppose that $\ix_n$ is Deligne--Mumford (automatic if $\characteristic k=0$) and
  that either $\ix$ has generically trivial stabilizer or $\bX$ is
  quasi-projective. Then

\begin{enumerate}
    \item 
$\ix_n$ is a quotient stack $[U/\GL_m]$ where $U$ is an algebraic space.

\item If, in addition, $\bX$ is separated, then $U$ is separated and the action of $\GL_m$
  on $U$ is proper.

  \item If, in addition, $\bX$ is a scheme, then so is $U$.

  \item If, in addition, $\bX$ is a separated
    scheme, then we can take $U$ to be quasi-affine.

  \item If, in addition, $\bX$ is projective, then there is a projective variety $X$
    with a linearized action of a $\GL_m$ such that $X^{s} = X^{ss} =U$. Moreover,
    if $\characteristic k = 0$, we can take $X$ to be smooth.
\end{enumerate}
    \end{proposition}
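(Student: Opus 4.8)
The plan is to transport the hypotheses from $\bX$ to the coarse space $\bX_n$ of $\ix_n$ and then apply the structure theory of quotient stacks to the smooth tame Deligne--Mumford stack $\ix_n$. Since $\pi$ is properly stable, Theorem~\ref{thm.main}~\itemref{item.mainthm.final.propstab} says $\ix_n$ is a smooth tame stack with coarse moduli space $\bX_n$, and by~\itemref{item.mainthm.gmsprojbirational} the morphism $\bX_n\to \bX$ is projective and birational. As projective morphisms preserve separatedness, the property of being a scheme, quasi-projectivity, and projectivity of the target, the space $\bX_n$ enjoys each of these properties whenever $\bX$ does. Finally, by~\itemref{item.mainthm.final.exc} the morphism $\ix_n\to \ix$ is an isomorphism over the schematically dense stable locus, so $\ix_n$ has trivial generic stabilizer whenever $\ix$ does. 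Thus $\ix_n$ is a smooth tame Deligne--Mumford stack which is either generically an algebraic space or has quasi-projective coarse space.

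For \emph{(1)} I would use the standard criterion that $\ix_n\cong [U/\GL_m]$ for an algebraic space $U$ precisely when $\ix_n$ admits a vector bundle $V$ of rank $m$ on which every stabilizer acts faithfully; in that case $U=\mathrm{Fr}(V)=\mathrm{Isom}_{\ix_n}(\cO^m,V)$ is a Deligne--Mumford stack with trivial stabilizers, hence an algebraic space, and $U\to \ix_n$ is affine. The existence of such a faithful bundle is exactly what is supplied, in each of our two cases, by the quotient-stack results of Edidin--Hassett--Kresch--Vistoli: a smooth separated Deligne--Mumford stack whose coarse space is quasi-projective, or one which is generically an algebraic space, is a quotient stack.

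Parts \emph{(2)}--\emph{(4)} then follow by reading off properties of $U$. For \emph{(2)}, the stack $\ix_n$ is separated if and only if $\bX$ is (the corresponding statement in Theorem~\ref{thm.main}), and a quotient stack $[U/\GL_m]$ is separated exactly when the action of $\GL_m$ on $U$ is proper; moreover $U$ is separated because $U\to \ix_n$ is affine and $\ix_n$ is separated. For \emph{(4)}, when $\bX$ is a separated scheme so is $\bX_n$, and I would invoke Totaro's theorem that a normal Noetherian stack with affine stabilizers has the resolution property if and only if it is of the form $[U/\GL_m]$ with $U$ quasi-affine; the resolution property for $\ix_n$ is guaranteed by the separated scheme $\bX_n$ together with the results quoted above. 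Part \emph{(3)} is the weaker assertion that $U$ may be taken to be a scheme: here I would argue locally over the affine charts $\Spec R\subseteq \bX_n$, where the corresponding open substack of $\ix_n$ has affine coarse space and its frame bundle becomes quasi-affine, so that $U$ is covered by schemes and is therefore a scheme.

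For \emph{(5)}, suppose $\bX$, and hence $\bX_n$, is projective. By \emph{(4)} we have $\ix_n=[U/\GL_m]$ with $U$ quasi-affine and with projective coarse space $\bX_n$. The plan is to reconstruct this as a geometric invariant theory quotient: pulling back an ample bundle from $\bX_n$ produces a $\GL_m$-linearized line bundle on $U$, which I would use to embed $U$ equivariantly and take $X$ to be the closure of the image in the ambient projective space, adjusting the linearization $L$ so that $X^{ss}(L)=X^s(L)=U$. In characteristic zero, equivariant resolution of singularities replaces $X$ by a smooth model with the same stable locus. I expect the main obstacle to be precisely this final reconstruction: producing a projective completion $X\supset U$ together with a linearization for which no point of the boundary $X\smallsetminus U$ becomes semistable, so that the semistable and stable loci coincide exactly with $U$. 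This is the converse direction to Example~\ref{ex.saturated-blowup} and requires controlling the linearization carefully, in the spirit of the variation-of-GIT discussion.
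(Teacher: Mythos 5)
Your parts (1), (2) and (4) follow essentially the paper's route: quotient-stack presentability of $\ix_n$ via the Edidin--Hassett--Kresch--Vistoli and Kresch--Vistoli theorems, properness of the $\GL_m$-action from separatedness of $\ix_n$, and quasi-affineness of $U$ from the resolution property via Totaro. There are, however, two genuine gaps.

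For (3), the paper's key point is that $U\to \bX_n$ is \emph{affine} --- proved either by Chevalley's theorem applied to a finite surjective cover $V\to\ix_n$ by a scheme with $V\to\bX_n$ finite, or directly from $U\to\bX_n$ being representable and cohomologically affine --- whence $U$ is a scheme as soon as $\bX_n$ is. Your local argument over affine charts $\Spec R\subseteq \bX_n$ does not work as stated: to conclude that the restriction of the \emph{given} frame bundle $U$ over $\Spec R$ is quasi-affine you would need an equivariant ample family (or the resolution property) on that restriction, which presupposes already knowing it is a reasonable scheme --- the thing to be proved; and you cannot simply substitute a different vector bundle locally, since $U$ is fixed from part (1). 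Note also that (3) does not assume $\bX$ separated, so the Totaro-type input you lean on in (4) is not available here.

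For (5), you explicitly leave open the decisive step: producing a projective completion $X\supset U$ with a linearization for which $X^{ss}=X^{s}=U$. The paper closes this as follows: since $U$ is quasi-affine it is quasi-projective, so Sumihiro's theorem gives an equivariant immersion $U\subset \Pro^N$ compatible with a representation $\GL_m\to\PGL_{N+1}$; one takes $X=\overline{U}$ with the linearization on $\cO_X(1)$ and then invokes Converse 1.13 of Mumford's GIT, which is precisely the statement that an invariant open subset admitting a quasi-projective geometric quotient (here supplied by the proper action and the projective coarse space $\bX_n$) can be realized as $X^{s}(L)=X^{ss}(L)$ for a suitable linearization $L$. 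Without that converse (or a proof of it), (5) is not established. A smaller point: the line bundle pulled back from $\bX_n$, which you propose to use for the embedding, is only semi-ample on $U$ and does not give an equivariant projective embedding; this is why the paper uses Sumihiro's theorem instead.
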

\begin{proof}
If the generic stabilizer of $\ix$ is trivial, so is the generic
stabilizer of $\ix_n$. Hence by \cite[Theorem 2.18]{EHKV:01} (trivial generic
stabilizer) or \cite[Theorem 2]{KrVi:04} (quasi-projective coarse space), $\ix_n$ is
a quotient stack. This proves (1).

If $\bX$ is separated, then $\ix_n$ is a separated quotient stack
so $\GL_m$ must act properly. This proves (2). (Note that if $\GL_m$ acts properly on $U$,
then $U$ is necessarily separated. This also follows immediately since $U\to \ix_n$ is affine.)

The morphism $U\to \bX_n$ is affine. Indeed, there is a finite surjective
morphism $V\to \ix_n$ \cite[Theorem 2.7]{EHKV:01} where $V$ is a scheme and
$V\to \bX_n$ is finite and surjective, hence affine. It follows that
$U\times_{\ix_n} V\to \bX_n$ is affine and hence $U\to \bX_n$ is affine as well
(Chevalley's theorem). One can also deduce this directly from $U\to \bX_n$
being representable and cohomologically affine (Serre's theorem).

In particular, if $\bX$ is a scheme, then so is $\bX_n$ and $U$. This proves
(3). Similarly, if $\bX$ is a separated scheme, then so is $\bX_n$ and $U$.
But $U$ is a smooth separated scheme and thus has a $G$-equivariant
ample family of line bundles. It follows that $\ix_n$ has the resolution
property and that we can choose $U$ quasi-affine,
see \cite[Theorems 1.1, 1.2]{Tot:04} for further details. This proves (4).

We now prove (5). Since $U$ is quasi-affine, it is also quasi-projective. By
\cite[Theorem 1]{Sum:74} there is an immersion $U \subset \Pro^N$ and a
representation $\GL_m\to\PGL_{N+1}$ such that the $\GL_m$-action
on $U$ is the restriction of the $\PGL_{N+1}$-action on $\Pro^N$.
Let $X$ be the closure of $U$ in $\Pro^N$. The action of $G$ on $X$ is linearized with respect to the line bundle ${\mathcal O}_X(1)$.
Our statement follows from \cite[Converse 1.13]{MFK:94}.

Finally, if $\characteristic k =0$, then by equivariant resolution of singularities we can
embed $U$ into a non-singular projective $G$-variety $X$. 
\end{proof}

Note that we only used that $\bX_n$ is Deligne--Mumford to deduce that
$\ix_n$ is a quotient stack.

\subsection{Resolution of good quotient singularities}
Combining the main theorem with destackification of tame stacks~\cite{Ber:17,BeRy:14}, we
obtain the following results, valid in any characteristic and
also in mixed characteristic since
we may work over any base scheme or algebraic space $S$.
\begin{corollary}[Functorial destackification of stacks with good moduli spaces]\label{cor.res-good-quot-sing}
Let $\ix$ be a smooth Artin stack with stable good moduli space
morphism $\pi\colon \ix \to \bX$. Then there exists a sequence
$\ix_n\to\ldots\to \ix_1\to\ix_0=\ix$ of birational morphisms of smooth Artin
stacks such that
\begin{enumerate}
\item 
Each $\ix_k$ admits a stable good moduli space
$\pi_k\colon \ix_k \to \bX_k$.
\item The morphism $\ix_{k+1} \to \ix_{k}$ is either a saturated blowup
  in a smooth center, or a root stack in a smooth divisor.
\item The morphism $\ix_{k+1} \to \ix_{k}$ induces a 
projective 
birational morphism of good moduli spaces $\bX_{k+1} \to \bX_{k}$.
\item $\bX_n$ is a smooth algebraic space.
\item $\ix_n\to \bX_n$ is a composition of a gerbe $\ix_n\to (\ix_n)_\rig$
and a root stack $(\ix_n)_\rig\to \bX_n$ in an snc divisor $D\subset \bX_n$.
\end{enumerate}
Moreover, the sequence is functorial with respect to strong smooth morphisms
$\ix'\to \ix$, that is, if $\bX'\to \bX$ is smooth and $\ix'=\ix\times_\bX
\bX'$, then the sequence $\ix'_n\to\ldots\to \ix'$ is obtained as the pullback
of $\ix_n\to\ldots\to \ix$ along $\ix'\to \ix$.
Similarly, the sequence is functorial with respect to arbitrary base change
$S'\to S$.
\end{corollary}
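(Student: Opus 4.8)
The plan is to assemble Corollary~\ref{cor.res-good-quot-sing} from two black boxes: the canonical reduction of stabilizers (Theorem~\ref{thm.main}) and Bergh's destackification for tame stacks~\cite{Ber:17,BeRy:14}. The strategy is to run Theorem~\ref{thm.main} first, producing a canonical sequence $\ix_m\to\dots\to\ix$ whose terminal stack $\ix_m$ is a smooth gerbe over a tame stack (item~\itemref{item.mainthm.final.gerbe}, applicable since $\ix$ smooth forces $\ix^s$ reduced), and then to feed $\ix_m$ into destackification. I would concatenate the two sequences and relabel the composite $\ix_n\to\dots\to\ix_0=\ix$, where the first block consists of Reichstein transforms in smooth centers (the morphisms of Theorem~\ref{thm.main} in the smooth case, by the first Remark following that theorem) and the second block consists of root stacks in smooth divisors coming from destackification.

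\emph{First} I would verify that each morphism in the Theorem~\ref{thm.main} block satisfies conclusions (1)--(3) of the corollary: smoothness and stable good moduli spaces are~\itemref{item.mainthm.smooth} and~\itemref{item.mainthm.stablegms}, the Reichstein-transform description is the cited Remark, and projectivity/birationality of $\bX_{k+1}\to\bX_k$ is~\itemref{item.mainthm.gmsprojbirational}. \emph{Next} I would invoke destackification on the smooth tame stack underlying $\ix_m$: Bergh's theorem yields a sequence of stacky blowups---which for tame stacks are root stacks in smooth centers---terminating in a stack whose coarse/good moduli space is smooth, giving conclusion~(4), and exhibiting $\ix_n$ as a gerbe followed by a root stack in an snc divisor, which is conclusion~(5). \emph{Then} I must check that the good moduli spaces of the destackification block also induce projective birational morphisms, so that~(3) holds across the whole sequence; here the point is that a root stack in a smooth divisor has the ambient stack's coarse space unchanged on the level of the good moduli space, or induces a blowup on coarse spaces, so Proposition~\ref{prop.saturatedproj} and the fact that these are isomorphisms or blowups on moduli spaces gives projectivity and birationality.

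\emph{Finally}, for the functoriality clause I would appeal to Theorem~\ref{thm.reichsteinfunctorial}, which establishes that the Theorem~\ref{thm.main} block commutes with strong smooth base change $\ix'=\ix\times_\bX\bX'\to\ix$, together with the functoriality of destackification with respect to smooth morphisms, which Bergh builds into~\cite{Ber:17}. The compatibility of the gerbe structure on $\ix_m$ with base change ensures the two functorialities splice: the pullback of the terminal gerbe is the terminal gerbe of the pulled-back sequence, so destackification of the pullback agrees with the pullback of the destackification.

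\emph{The main obstacle} I anticipate is not in running either black box but in confirming conclusion~(3) for the root-stack block and in matching the two functoriality statements precisely. Destackification is phrased intrinsically for tame stacks, while Theorem~\ref{thm.main} is phrased for stacks with good moduli spaces, so I would need to check that passing through the gerbe $\ix_m\to(\ix_m)_\tame$ does not disturb the good moduli space---this is exactly the content of Remark~\ref{rem.goodvscoarse}, which identifies the coarse space of a tame stack with its good moduli space. The careful bookkeeping of which morphisms are root stacks versus Reichstein transforms, and verifying that root stacks in smooth snc divisors induce projective birational (indeed often isomorphic) morphisms of good moduli spaces, is where the real work lies; everything else is assembling cited results.
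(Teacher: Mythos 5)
Your proposal follows the paper's proof essentially verbatim: apply Theorem~\ref{thm.main} to reduce to a gerbe over a tame stack, destackify the tame stack, pull the resulting sequence back along the gerbe, and concatenate; functoriality comes from Theorem~\ref{thm.reichsteinfunctorial} together with the functoriality built into Bergh's destackification. One correction, though: destackification proceeds by \emph{smooth stacky blowups}, which are either root stacks in smooth divisors \emph{or} ordinary blowups in smooth centers --- not root stacks alone, as you assert twice. The ordinary blowup steps are still covered by conclusion~(2), but only after one observes that a blowup of a tame stack in a smooth center coincides with the Reichstein transform (equivalently, with the saturated blowup): for a coarse moduli space the saturation of a closed substack agrees with the substack as a set, so the strict transform of the saturation is empty and nothing is deleted (cf.\ the first case in the proof of Proposition~\ref{prop.stability.saturatedproj} and the remark on Deligne--Mumford stacks). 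This identification is the one sentence of genuine content in the paper's proof beyond citing the two black boxes, and it is the piece your write-up is missing; it also does part of the work for your conclusion~(3), since Proposition~\ref{prop.saturatedblowups} then shows these steps induce blowups, hence projective birational morphisms, on the moduli spaces.
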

\begin{proof}
We first apply Theorem~\ref{thm.main} to $\ix$ and can thus assume that $\ix$
is a gerbe over a tame stack $\ix_\tame$.  We then apply destackification to
$\iy:=\ix_\tame$. This gives a sequence of smooth stacky blowups $\iy_n\to
\iy_{n-1}\to\ldots\to \iy_1\to \iy_0=\iy$, such that $\bY_n$ is smooth and
$\iy_n\to \bY_n$ factors as a gerbe $\iy_n\to (\iy_n)_\rig$ followed by a
root stack $(\iy_n)_\rig\to \bY_n$ in an snc
divisor. A smooth stacky blowup is either a root stack along a smooth divisor
or a blowup in a smooth center. A blowup on a tame stack is the same thing as
a saturated blowup.
We let $\ix_k=\ix\times_\iy \iy_k$. Then $\ix_n\to\iy_n\to \iy_\rig$ is a gerbe
and $\bX_n=\bY_n$.
\end{proof}

\begin{corollary}[Resolution of good quotient singularities]
If $X$ is a stable good moduli space of a smooth stack $\ix$, then there exists a
projective birational morphism $p\colon X'\to X$ where $X'$ is a smooth
algebraic space. The resolution is functorial with respect to smooth
morphisms and arbitrary base change but depends on $\ix$.
\end{corollary}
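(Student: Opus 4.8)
The plan is to obtain the statement directly from Corollary~\ref{cor.res-good-quot-sing}, reading off the bottom row of good moduli spaces. Write $X=\bX$ and let $\ix$ be the smooth stack with stable good moduli space $\pi\colon \ix\to \bX$. Applying Corollary~\ref{cor.res-good-quot-sing} to $\ix$ produces a sequence of smooth Artin stacks $\ix_n\to\dots\to\ix_0=\ix$ together with the induced sequence of good moduli spaces $\bX_n\to\dots\to \bX_0=\bX=X$. I would set $X'=\bX_n$ and take $p\colon X'\to X$ to be the composite $\bX_n\to\bX_{n-1}\to\dots\to \bX$.

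Next I would check the three asserted properties. Smoothness of $X'$ is precisely conclusion~(4) of the corollary. That $p$ is birational is immediate: each $\bX_{k+1}\to\bX_k$ is birational by conclusion~(3), and a finite composite of birational morphisms is again birational. Projectivity of $p$ follows because each $\bX_{k+1}\to\bX_k$ is projective (conclusion~(3)), and a composite of projective morphisms of algebraic spaces of finite type over $k$ is projective. This already yields the desired projective birational morphism $p\colon X'\to X$ with $X'$ a smooth algebraic space.

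For functoriality, suppose $g\colon W\to X$ is a smooth morphism of algebraic spaces. I would lift it to the stacky level by forming $\iy:=\ix\times_\bX W$. Since good moduli space morphisms are invariant under base change along the algebraic space, $\iy\to W$ is again a good moduli space morphism; since $W\to X$ is smooth, the projection $\iy\to\ix$ is smooth and hence $\iy$ is smooth over $k$; and stability of the good moduli space is preserved under base change (Proposition~\ref{prop.stable}), so $\iy\to W$ is a stable good moduli space morphism. By construction the square with vertical maps $\iy\to W$, $\ix\to X$ and horizontal maps $\iy\to\ix$, $W\to X$ is cartesian, so $\iy\to\ix$ is a \emph{strong} smooth morphism. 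The functoriality clause of Corollary~\ref{cor.res-good-quot-sing} then identifies the sequence for $\iy$ with the pull-back of the sequence for $\ix$; in particular $\bY_n=\bX_n\times_X W$, so the resolution of $W$ is $X'\times_X W\to W$, the pull-back of $p$.

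The one point that deserves care is this final functoriality step: one must verify that base change along a smooth morphism of the good moduli space produces a strong smooth morphism between smooth stacks with stable good moduli spaces, so that the hypotheses of the functoriality clause in Corollary~\ref{cor.res-good-quot-sing} are genuinely met. Once this bookkeeping is in place, everything else is a formal consequence of composing the projective birational morphisms in the bottom row of that corollary, so I do not expect any substantial additional obstacle.
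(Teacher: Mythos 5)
Your proposal is correct and is exactly the argument the paper intends: the corollary is stated as an immediate consequence of Corollary~\ref{cor.res-good-quot-sing}, obtained by composing the projective birational morphisms $\bX_{k+1}\to\bX_k$ and invoking the smoothness of $\bX_n$ and the functoriality clause for strong smooth morphisms. Your extra care in checking that base change along a smooth $W\to X$ yields a strong smooth morphism of smooth stacks with stable good moduli spaces is the right bookkeeping and poses no obstacle.
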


Considering Remark~\ref{rem.algorithm.notstable}, the result remains valid even
if $\ix\to X$ is not stable.


\section{The proof of Theorem \ref{thm.main} in the singular case}\label{sec.proof.sing}

Recall that the set of points $|\ix^\maxlocus|\subset |\ix|$ which has
maximal stabilizer dimension is closed. This set has a canonical structure
as a closed substack that we denote $\ix^\maxlocus$ (Appendix~\ref{app.fixed}).
If $\ix$ is smooth over a scheme $S$, then $\ix^\maxlocus$ is smooth over $S$.
If $f\colon \ix\to \iy$
is a stabilizer-preserving morphism, for example a closed immersion or a
strong morphism, then $\ix^\maxlocus=f^{-1}(\iy^\maxlocus)$ if $\ix$ and
$\iy$ have the same maximal stabilizer dimension. When $\ix=[U/G]$
and $G$ has the same dimension as the maximal stabilizer dimension, then
$\ix^\maxlocus=[U^{G_0}/G]$.

The locus of stable points $\ix^s$ may have connected components of different
stabilizer dimensions. A complication in the singular case is that the closures
of these components may intersect. The following lemma takes care of this problem. 

\begin{lemma} \label{lem.maxlocus-singular}
Let $\ix$ be an Artin stack with stable good moduli space morphism
$\pi\colon \ix \to X$.
Let $N$ (resp.\ $n$) be the maximum dimension of a stabilizer of a
point of $\ix$ (resp.\ a point of $\ix \smallsetminus \ix^s$). Let
$\ix^s_{(k)} \subset \ix^s$ denote the subset of points that are stable with
stabilizer of dimension $k$. Let $\ix_{\leq n}\subset \ix$ denote the
subset of points with stabilizer of dimension at most $n$.
Then
\begin{enumerate}
\item\label{item.singmaxlocus.1}
 $\ix^s$ is the disjoint union of the $\ix^s_{(k)}$.
\item\label{item.singmaxlocus.2}
  We have a partition of $\ix$ into open and closed substacks:
\begin{equation} \label{eq.stablestrat}
\ix = \ix_{\leq n} \amalg \ix^s_{(n+1)}\amalg\dots\amalg \ix^s_{(N)}.
\end{equation}
\end{enumerate}
In $\ix_{\leq n}$ we have the following two open substacks
\[
\ix^s_{(n)}\quad\text{and}\quad \ix^\ast=\ix_{\leq n}\smallsetminus \overline{\ix^s_{(n)}}.
\]
We let $\overline{\ix^s_{(n)}}$ and $\overline{\ix^\ast}$ denote their schematic
closures.
\begin{enumerate}\setcounter{enumi}{2}
\item\label{item.singmaxlocus.3}
  Every point of $\overline{\ix^s_{(n)}}$ has stabilizer of
  dimension $n$ and
  every point of $\overline{\ix^\ast}$ has stabilizer of dimension at most $n$.
\item\label{item.singmaxlocus.5}
  $\ix^s_{(n)}=\ix_{\leq n}\smallsetminus\overline{\ix^\ast}$. Thus, every
  point of $\ix\smallsetminus \ix^s$ with stabilizer of dimension $n$ is
  contained in $\overline{\ix^\ast}$. In particular,
  $\bigl\lvert \overline{\ix^\ast}^\maxlocus\bigr\rvert=\left\lvert \ix\smallsetminus \ix^s\right\rvert^\maxlocus$.
\end{enumerate}
\end{lemma}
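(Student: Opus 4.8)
The plan is to build everything on two facts: the stabilizer dimension $x\mapsto \dim G_x$ is upper semi-continuous (so each $\{x : \dim G_x \geq k\}$ is closed), and—crucially—any point with $\dim G_x > n$ is automatically stable, since by definition $n$ bounds the stabilizer dimension on $\ix\smallsetminus\ix^s$. For \itemref{item.singmaxlocus.1} I would first argue that $\dim G_x$ is \emph{locally constant} on $\ix^s$. Decomposing $\ix$ into its finitely many irreducible components $\iz$, Proposition~\ref{prop.stable} shows that the stable locus $\iz^s$ of each $\pi|_\iz$ has constant stabilizer dimension equal to the generic stabilizer dimension of $\iz$; moreover a point of $\ix^s$ is stable on every component through it, so $\ix^s\subset\bigcup_\iz \iz^s$ and, since components not through a given $x\in\ix^s$ stay away from $x$, every nearby stable point shares the value $\dim G_x$. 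Hence each level set $\ix^s_k$ is open and closed in $\ix^s$, which gives the disjoint decomposition $\ix^s=\coprod_k\ix^s_k$.

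For \itemref{item.singmaxlocus.2} I would exploit the key observation: for $k>n$ every point with $\dim G_x=k$ is stable, so $\ix^s_k=\{x:\dim G_x=k\}$. Each such $\ix^s_k$ is open in $\ix$ (being open in the open substack $\ix^s$) and closed in $\ix$, since $\{x:\dim G_x>n\}=\coprod_{k>n}\ix^s_k$ is closed and $\ix^s_{k_0}$ is obtained from this closed set by deleting the open set $\bigcup_{k\neq k_0,\,k>n}\ix^s_k$. Consequently $\ix_{\leq n}=\ix\smallsetminus\{x:\dim G_x>n\}$ is open and closed, yielding the partition \eqref{eq.stablestrat} into open and closed substacks.

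Statement \itemref{item.singmaxlocus.3} is then formal. Since $\ix_{\leq n}$ is closed by \itemref{item.singmaxlocus.2} and contains both $\ix^s_n$ and $\ix^\ast$, their schematic closures lie in $\ix_{\leq n}$, so every point of $\overline{\ix^s_n}$ and of $\overline{\ix^\ast}$ has stabilizer dimension $\leq n$. For $\overline{\ix^s_n}$ upper semi-continuity gives the reverse inequality, since $\ix^s_n\subset\{x:\dim G_x\geq n\}$ and the latter is closed; hence $\dim G_x=n$ identically on $\overline{\ix^s_n}$.

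The substantive step is \itemref{item.singmaxlocus.5}, and I expect the main obstacle to be showing that every \emph{non-stable} point of stabilizer dimension $n$ lies in $\overline{\ix^\ast}$. One inclusion is easy: $\ix^s_n$ is open in $\ix_{\leq n}$ and disjoint from the open set $\ix^\ast=\ix_{\leq n}\smallsetminus\overline{\ix^s_n}$, so $\ix^s_n\cap\overline{\ix^\ast}=\emptyset$, giving $\ix^s_n\subset\ix_{\leq n}\smallsetminus\overline{\ix^\ast}$. For the reverse inclusion, a point $x\in\ix_{\leq n}\smallsetminus\overline{\ix^\ast}$ has a neighbourhood in $\ix_{\leq n}$ missing $\ix^\ast$, hence contained in $\ix_{\leq n}\cap\overline{\ix^s_n}=\overline{\ix^s_n}$, so $\dim G_x=n$ by \itemref{item.singmaxlocus.3}; it remains to rule out that $x$ is non-stable. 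Here I would invoke \cite[Proposition 9.1]{Alp:13}: if $x$ is non-stable its fibre $\pi^{-1}(\pi(x))$ is not a singleton and contains a unique closed point $y$ whose stabilizer strictly dominates all others in the fibre. Since $\dim G_x=n$ forces $\dim G_y\geq n$, while any dimension $>n$ would make $y$ stable with singleton fibre, we must have $\dim G_y=n$, and then strictness of the maximal stabilizer forces $x=y$. The fibre therefore contains points $x'$ with $\dim G_{x'}<n$; each such $x'$ lies in $\ix^\ast$ (it is in $\ix_{\leq n}$ and, having dimension $<n$, cannot lie in $\overline{\ix^s_n}$ by \itemref{item.singmaxlocus.3}), and the unique closed point $x=y$ is a specialization of $x'$, so $x\in\overline{\{x'\}}\subset\overline{\ix^\ast}$, contradicting $x\notin\overline{\ix^\ast}$. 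Hence $x$ is stable and $x\in\ix^s_n$, proving $\ix^s_n=\ix_{\leq n}\smallsetminus\overline{\ix^\ast}$. The same analysis shows that non-stable points of dimension $n$ all lie in $\overline{\ix^\ast}$; combined with the disjointness $\ix^s_n\cap\overline{\ix^\ast}=\emptyset$ and \itemref{item.singmaxlocus.3}, the maximal-stabilizer loci of $\overline{\ix^\ast}$ and of $\ix\smallsetminus\ix^s$—both attained exactly in dimension $n$—coincide, giving the final identity $|\overline{\ix^\ast}^\maxlocus|=|\ix\smallsetminus\ix^s|^\maxlocus$.
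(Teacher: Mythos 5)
Your proof is correct and follows essentially the same route as the paper's: local constancy of stabilizer dimension on $\ix^s$ via Proposition~\ref{prop.stable}, upper semi-continuity for the open-and-closed decomposition, and \cite[Proposition 9.1]{Alp:13} to produce a generization with smaller stabilizer for the key step in \itemref{item.singmaxlocus.5}. You merely spell out a few details the paper leaves implicit (the component-wise argument for \itemref{item.singmaxlocus.1} and the deduction that a non-stable point of stabilizer dimension $n$ is the unique closed point of its fiber).
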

\begin{proof}
\itemref{item.singmaxlocus.1}
The stabilizer dimension is locally constant on $\ix^s$, so
$\ix^s=\coprod_k \ix^s_{(k)}$.

\itemref{item.singmaxlocus.2}
The subset of points of stabilizer dimension $\geq n+1$ is closed by upper
semi-continuity. By assumption this set is also contained in $\ix^s$ and hence
open. This gives the decomposition in~\eqref{eq.stablestrat}.

\itemref{item.singmaxlocus.3}
Every point of either $\overline{\ix^s_{(n)}}$ or $\overline{\ix^\ast}$ lies in
$\ix_{\leq n}$ and thus has stabilizer of dimension at most $n$.
Every point of $\overline{\ix^s_{(n)}}$ has stabilizer of dimension at least $n$ by
upper semi-continuity.

\itemref{item.singmaxlocus.5}
Since $\ix^s$ is open, $\ix^s_{(n)}\subset \ix_{\leq n}\smallsetminus \overline{\ix^\ast}$.
Suppose that $x\in |\ix_{\leq n}|$. If $x$ has
stabilizer of dimension $<n$, then $x\in \ix^\ast$. If
$x$ has stabilizer of dimension $n$ but is not stable, then $x$ is the unique closed point
in $\pi^{-1}(\pi(x))$ and there exists a generization $y$ with stabilizer
of dimension $<n$ \cite[Proposition 9.1]{Alp:13}. Thus $y\in \ix^\ast$ and so $x\in \overline{\ix^\ast}$.
\end{proof}

\begin{remark}\label{rem.singular.simplercase}
  If all points of $\ix^s$ have stabilizers of the same dimension and
  $\ix\neq \ix^s$, then $\ix =\ix^\ast=
  \overline{\ix^\ast}$. Two notable examples are irreducible stacks and
  properly stable stacks. When this is the case, the proof in the singular case
  simplifies quite a bit. In general, note that the closed immersion
  $\overline{\ix^s_{(n)}}\cup \overline{\ix^\ast}\to \ix_{\leq n}$ is surjective
  but not necessarily an isomorphism if $\ix_{\leq n}$ has embedded components.
\end{remark}

\begin{example}
  Let $U = \Spec k[x,y,z]/(xz,yz)$ and let $\GG_m$ act with weights $(1,-1,0)$.
  Then $\ix = [U/\GG_m]$ has two irreducible components:
  ${\overline{\ix^s_{(1)}}=V(x,y)=\ix^\maxlocus}$ which has stabilizer $\GG_m$ at every
  point and $\overline{\ix^\ast}=V(z)=\overline{\ix^s_{(0)}}$
  which has a single point with stabilizer $\GG_m$. The algorithm will blow up
  $\overline{\ix^\ast}^\maxlocus=V(x,y,z)$.
\end{example}

\subsection{Proof of
Theorem~\ref{thm.main} when $\ix$ is singular and of finite type over an
algebraically closed field} \label{ssec.thm-main-algclosed} We use induction
on the maximum dimension of the stabilizer of a point
of $\ix \smallsetminus \ix^s$ and the smooth case to verify that the
maximum dimension drops after the appropriate saturated blowup.

First suppose $\ix=\ix^s$ and let us see that Theorem~\ref{thm.main}
holds with $\ix_n=\ix_0=\ix$. Note that $\ix$ has locally constant stabilizer
dimensions so it is a disjoint union of stacks with constant stabilizer
dimensions. If in addition $\ix$ is properly stable, then the stabilizer
dimensions are all equal to zero and $\ix$ is a tame stack
(Proposition~\ref{prop.tameisgood}). If instead
$\ix\smallsetminus \ie$ is a gerbe over a tame stack (automatic if
$\ix\smallsetminus \ie$ is reduced by Proposition \ref{prop.tamegerbe}), then
$(\ix\smallsetminus \ie)^\maxlocus=\ix\smallsetminus \ie$ on every connected
component of $\ix$ (Corollary~\ref{cor.tamegerbe-singular}). Since
$\ix\smallsetminus \ie$ is schematically dense in $\ix$ it follows that
$\ix^\maxlocus=\ix$ on every connected component of $\ix$.
Thus $\ix$ is a gerbe over a tame stack
(Corollary~\ref{cor.tamegerbe-singular}). We have now proven
Theorem~\ref{thm.main} when $\ix=\ix^s$.

Now suppose $\ix\neq \ix^s$. Let $n$ be the maximal dimension of a stabilizer
of $\ix\smallsetminus \ix^s$ and assume that the theorem has been proven for
smaller $n$. Let $\ix_0=\ix$ and $\ic_0=\overline{\ix^\ast}^\maxlocus$ in the
notation of Lemma~\ref{lem.maxlocus-singular}. Let $f_1\colon
\ix_1=\Bl^\pi_{\ic_0} \ix_0$ be the saturated blowup and
$\ie_1=f_1^{-1}(\ic_0)+ f_1^{-1}(\ie_0)$ where $f_1^{-1}(\ic_0)$ is the
exceptional divisor. That the conclusion of the main theorem holds for $\ell=0$
follows by the following proposition. In particular, the maximal dimension of a
stabilizer of $\ix_1\smallsetminus \ix_1^s$ is strictly less than $n$ so by
induction we have $\ix_n\to \ldots \to \ix_1$ such that the conclusions also
hold for $\ell=1,\dots,n$ and the theorem follows for $\ix$.

\begin{proposition} \label{prop.inductionstep-singular}
Let $\ix$ be an Artin stack of finite type over an algebraically closed field
with stable good moduli space morphism
$\pi\colon \ix \to \bX$.
Let $n$ be the maximum dimension of a stabilizer of a
point of $\ix \smallsetminus \ix^s$. Let 
$\ic=(\overline{\ix^\ast})^\maxlocus$
have the algebraic substack structure given by
Proposition~\ref{prop.ixmax-singular} and let $\ix' = \Bl^{\pi}_\ic(\ix)$ be
the saturated blowup. Then $\ix'$ is an Artin stack 
with the following properties.
\begin{enumerate}
\myitemi{2a}\label{item.satblowup.stablegms}
The stack $\ix'$ has a good moduli space $\bX'$ and the good moduli space
morphism
$\pi' \colon \ix' \to \bX'$ is stable (properly stable if $\pi$ is properly stable).
\myitemi{2b}\label{item.satblowup.isolocus}
  The morphism $f\colon \ix' \to \ix$ induces an isomorphism
  $\ix'\smallsetminus f^{-1}(\ic)\to \ix\smallsetminus
  \pi^{-1}\bigl(\pi(\ic)\bigr)$.
\myitemi{2c}\label{item.satblowup.gms-isolocus}
The induced morphism of good moduli spaces $\bX' \to \bX$
is 
projective
and an isomorphism over the image of $\ix^s$ in $\bX$.
\myitemi{3}\label{item.satblowup.stabdim-drops}
 Every point of $\ix' \smallsetminus (\ix')^s$
has stabilizer of dimension strictly less than $n$. 
\end{enumerate}
\end{proposition}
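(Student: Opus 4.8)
The assertions \itemref{item.satblowup.stablegms}--\itemref{item.satblowup.gms-isolocus} should follow formally from the machinery of Section~3. Writing $\iA=\bigoplus_{m\ge 0}\iI^m$ for the Rees algebra of $\ic$, so that $\ix'=\Proj^\pi_\ix \iA$, Proposition~\ref{prop.saturatedproj} produces a good moduli space $\pi'\colon \ix'\to \bX':=\Proj_\bX \pi_*\iA$ and identifies $\bX'\to \bX$ with the natural morphism $\Proj_\bX \pi_*\iA\to \bX$, which is projective; Proposition~\ref{prop.stability.saturatedproj}\,(2) then gives stability of $\pi'$, and properly stable when $\pi$ is by part~(1). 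This is~\itemref{item.satblowup.stablegms}. By Lemma~\ref{lem.maxlocus-singular}\,\itemref{item.singmaxlocus.5} we have $|\ic|\subset |\ix\smallsetminus \ix^s|$, and since $\ix^s$ is saturated (Proposition~\ref{prop.stable}) this forces $\pi^{-1}(\pi(\ic))\cap \ix^s=\emptyset$. Then~\itemref{item.satblowup.isolocus} is exactly Proposition~\ref{prop.saturatedblowups}\,\itemref{item.saturatedblowup.dense}; since that isomorphism contains $\ix^s$ and $\bX'\to \bX$ is an isomorphism over $\bX\smallsetminus \pi(\ic)\supset \pi(\ix^s)$, we obtain~\itemref{item.satblowup.gms-isolocus}.

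The substance is~\itemref{item.satblowup.stabdim-drops}, and the plan is to first reduce it to a cleaner claim. Because $f$ is representable, every point of $\ix'$ lying over $\ic$ has stabilizer of dimension at most $n$, the stabilizer dimension along $\ic$; outside the exceptional locus $\ix'\cong \ix\smallsetminus \pi^{-1}(\pi(\ic))$ has no non-stable point of dimension $\ge n$ (Lemma~\ref{lem.maxlocus-singular}\,\itemref{item.singmaxlocus.5} again). Since the strata $\ix^s_{>n}$ are open, closed and saturated (Lemma~\ref{lem.maxlocus-singular}\,\itemref{item.singmaxlocus.2}) and disjoint from $\ic$, the saturated blowup leaves them untouched, so I may assume $\ix=\ix_{\le n}$ and prove that \emph{every point of $\ix'$ with stabilizer of dimension exactly $n$ is stable}. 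The key geometric device is a decomposition: by Lemma~\ref{lem.maxlocus-singular}\,\itemref{item.singmaxlocus.5} one has $\ix\smallsetminus \overline{\ix^\ast}=\ix^s_n$, whence $\ix=\overline{\ix^\ast}\cup \overline{\ix^s_n}$ as closed substacks. Since $\ix\smallsetminus \pi^{-1}(\pi(\ic))$ is schematically dense in $\ix'$ (Proposition~\ref{prop.saturatedblowups}\,\itemref{item.saturatedblowup.dense}), the strict transforms of $\overline{\ix^\ast}$ and $\overline{\ix^s_n}$ are closed substacks whose union contains a dense open, hence cover $\ix'$.

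I would then treat the two strict transforms separately. The strict transform of $\overline{\ix^s_n}$ equals $\Bl^\pi_{\ic\cap \overline{\ix^s_n}} \overline{\ix^s_n}$ (Proposition~\ref{prop.saturatedblowups-and-basechange}\,\itemref{item.saturatedblowup.closed}); every point of $\overline{\ix^s_n}$ has stabilizer dimension $n$ (Lemma~\ref{lem.maxlocus-singular}\,\itemref{item.singmaxlocus.3}), so with its reduced structure it is a gerbe over a tame stack (Proposition~\ref{prop.tamegerbe}) with every point stable, and by the reduced case of Proposition~\ref{prop.stability.saturatedproj}\,(2) its saturated blowup is again such a gerbe, so \emph{all} of its points are stable. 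For the strict transform of $\overline{\ix^\ast}$, namely $\Bl^\pi_\ic \overline{\ix^\ast}$, I must show that no point has stabilizer of dimension $n$; this is \'etale-local on $\bX$, so by the local structure theorem~\cite[Theorem 2.9]{AHR:15} together with Proposition~\ref{prop.saturatedblowups-and-basechange}\,\itemref{item.saturatedblowup.basechange} I may take $\overline{\ix^\ast}=[W/G]$ with $W$ affine, $G=G_x$ linearly reductive of dimension $n$ and $\ic=[W^{G_0}/G]$, where $G_0$ is the reduced identity component. Embedding $W$ $G$-equivariantly into a representation $V$, one has $W^{G_0}=W\cap V^{G_0}$ and $[V/G]^{\maxlocus}=[V^{G_0}/G]$, so by Proposition~\ref{prop.saturatedblowups-and-basechange}\,\itemref{item.saturatedblowup.closed} the stack $\Bl^\pi_\ic \overline{\ix^\ast}$ is a closed substack of $\Bl^\pi_{[V^{G_0}/G]}[V/G]=R([V/G],[V/G]^{\maxlocus})$ (Proposition~\ref{prop.reichstein.is.saturatedproj}, as $V$ is smooth). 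A closed immersion preserves stabilizers, and $R_G(V,V^{G_0})=R_{G_0}(V,V^{G_0})$ (by the argument in the proof of Proposition~\ref{prop.inductionstep}) has no $G_0$-fixed points by Theorem~\ref{thm.fixed}; hence $\Bl^\pi_\ic \overline{\ix^\ast}$ has no point of stabilizer dimension $n$. Combining the two covers, a dimension-$n$ point of $\ix'$ cannot lie in $\Bl^\pi_\ic \overline{\ix^\ast}$, so it lies in the strict transform of $\overline{\ix^s_n}$ and is therefore stable, which is the reduced claim.

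The main obstacle I anticipate is precisely that the center $\ic=(\overline{\ix^\ast})^{\maxlocus}$ is only the \emph{non-stable} part of the locus of dimension-$n$ stabilizers: the stable dimension-$n$ points $\ix^s_n$ may accumulate along $\ic$ (their closures can meet, as in the example following Lemma~\ref{lem.maxlocus-singular}), so one cannot simply blow up the full fixed locus and quote Theorem~\ref{thm.fixed} directly. The decomposition $\ix=\overline{\ix^\ast}\cup \overline{\ix^s_n}$ is the device that isolates this accumulation into $\overline{\ix^s_n}$, where the saturated blowup keeps every point stable, leaving only $\overline{\ix^\ast}$ for the smooth-model argument. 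A secondary difficulty, and the reason the complete argument is deferred to the appendix, is positive characteristic: $G$ need not be smooth, $G_0$ is only the reduced identity component, and the scheme structures on $\ic$ and on the fixed loci $V^{G_0}$ and $W^{G_0}$ must be tracked with the care supplied by Proposition~\ref{prop.ixmax-singular}.
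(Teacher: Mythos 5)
Your proposal is correct and follows essentially the same route as the paper: assertions (2a)--(2c) from the saturated-blowup machinery of Section~3, and (3) via the decomposition $\ix_{\leq n}=\overline{\ix^s_n}\cup\overline{\ix^\ast}$ from Lemma~\ref{lem.maxlocus-singular}, with the $\overline{\ix^\ast}$ component reduced to Theorem~\ref{thm.fixed} by a $G$-equivariant closed embedding into a representation together with Proposition~\ref{prop.saturatedblowups-and-basechange}. The only real difference is on the $\overline{\ix^s_n}$ component, where the paper avoids your detour through Propositions~\ref{prop.tamegerbe} and~\ref{prop.stability.saturatedproj}: once the $\overline{\ix^\ast}$ component is known to have stabilizers of dimension $<n$, the two components are disjoint, hence open, closed and saturated in $\ix'$, and stability of the points of the $\overline{\ix^s_n}$ component (all of stabilizer dimension $n$) follows directly from \cite[Proposition 9.1]{Alp:13} --- which also sidesteps the fact that $\overline{\ix^s_n}$ need not be reduced, a point your appeal to the reduced case of Proposition~\ref{prop.stability.saturatedproj}(2) quietly glosses over.
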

\begin{proof}
Assertions
\itemref{item.satblowup.stablegms}--\itemref{item.satblowup.gms-isolocus}
follow from the properties of the saturated blowup (Propositions
\ref{prop.saturatedblowups} and \ref{prop.stability.saturatedproj}).

Since $\ix^s_{(n)}\amalg \ix^\ast$ is open in $\ix_{\leq n}$ and its complement
$\ix_{\leq n}\smallsetminus (\ix^s_{(n)}\amalg \ix^\ast)$ is contained in
$\ic$, we conclude that $f^{-1}(\ix^s_{(n)}\amalg \ix^\ast)$
is schematically dense in $\Bl_\ic(\ix_{\leq n})$ and thus
\[
\Bl_\ic(\ix_{\leq n}) = \Bl_\ic(\overline{\ix^s_{(n)}}) \cup \Bl_\ic(\overline{\ix^\ast})
\]
and similarly for saturated blowups (Proposition~\ref{prop.saturatedblowups-and-basechange}\itemref{item.saturatedblowup.closed}). We note that every point of
$\Bl^\pi_\ic(\overline{\ix^s_{(n)}})$ has stabilizer of dimension $n$ by upper
semi-continuity. If every point of $\Bl^\pi_\ic(\overline{\ix^\ast})$ has stabilizer
of dimension $<n$, then the two components are necessarily disjoint and
every point of $\Bl^\pi_\ic(\overline{\ix^s_{(n)}})$ is stable.
To prove \itemref{item.satblowup.stabdim-drops} we may thus replace $\ix$ with
$\overline{\ix^\ast}$. Then $\ic=\ix^\maxlocus$ and the stabilizer of every stable point
has dimension strictly less than $n$.

By the local structure theorem \cite[Theorem 1.1]{AHR:15} we may assume that
$\ix=[U/G]$ where $G$ is the stabilizer of a point in $\ix^\maxlocus$
and $U=\Spec A$
is affine. By the local finiteness of group actions, there is a
finite-dimensional $G$-submodule $V\subset A$ such that $\Sym(V)\to A$ is
surjective. We thus have a closed $G$-equivariant embedding $U\to
V^\vee$. Consequently, we have a closed embedding of stacks $\ix=[U/G]\to
\iy=[V^\vee/G]$. Since $\ix^\maxlocus = \iy^\maxlocus \cap \ix$ we obtain a closed
embedding
\[
\Bl_{\ix^\maxlocus} \ix \to \Bl_{\iy^\maxlocus} \iy.
\]
Indeed, $\Bl_{\ix^\maxlocus} \ix$ is the strict transform, that is,
the closure of $\ix\smallsetminus \ix^\maxlocus$ in $\Bl_{\iy^\maxlocus} \iy$.
This also holds for saturated blowups by
Proposition~\ref{prop.saturatedblowups-and-basechange}.

From the smooth case (Proposition~\ref{prop.inductionstep}
\itemref{item.reichstein.stabdim-drops}), we know that
$\Bl^\pi_{\iy^\maxlocus} \iy$ has no points with stabilizer of dimension
$n$. Hence, neither does $\Bl^\pi_{\ix^\maxlocus} \ix$. This proves
\itemref{item.satblowup.stabdim-drops}.
\end{proof}
\subsection{General case}\label{ssec.sing.over.base}
Suppose now that $\ix$ is any noetherian stack with good moduli space $\bX$.
Then $\ix\to \bX$ is of finite type~\cite[Theorem A.1]{AHR:15}. Let $S=\bX$.
As in Lemma \ref{lem.maxlocus-singular}, let $n$ denote the maximum dimension
of a stabilizer of a point of $\ix \smallsetminus \ix^s$, let
$\ix^\ast=\ix_{\leq n}\smallsetminus \overline{\ix^s_{(n)}}$ and let
$\overline{\ix^\ast}$ denote its schematic closure.  Let $\ix'$ be the
saturated blowup of $\ix$ in $\overline{\ix^\ast}^\maxlocus$.

The only part of Section \ref{ssec.thm-main-algclosed} that needs to be modified is the
proof of
Proposition~\ref{prop.inductionstep-singular}\itemref{item.satblowup.stabdim-drops}:
that every point of $\ix' \smallsetminus (\ix')^s$ has stabilizer of dimension
strictly less than $n$. As in that proof, we may first reduce to the case where
$\ix=\overline{\ix^\ast}$.

By the local structure theorem (Theorem~\ref{thm.localstructuregeneral}) we may
assume that $\ix=[U/\GL_N]$, where $U=\Spec A$ is affine, and that $S=\bX=\Spec
R$ where $R=A^{\GL_N}$. Pick a finite-dimensional $\GL_N$-subrepresentation
$V\subseteq A$ such that $\Sym (V)\otimes_{\ZZ} R\to A$ is surjective. This
gives a closed embedding of stacks $\ix\to \iy$ where
$\iy=[\Spec(\Sym(V)\otimes_{\ZZ}R)/\GL_N]$ is smooth over $S$.

A subtlety in positive and mixed characteristic is that $\iy$ need not have a
good moduli space. We do have an \emph{adequate moduli space} $\iy\to \bY$ and
the induced map $\bX\to \bY$ is finite and injective. It is enough to show that
the maximum stabilizer dimension drops after replacing $S=\bX$ with the
localization at any point $x\in \bX$. Then $\iy\to \bY$ becomes a good moduli
space because the unique closed point of $\iy$ lies in $\ix$ and has linearly
reductive stabilizer~\cite[Theorem 9.3]{AHR:19}. Since $\iy$ is smooth over
$S$, we have that the maximum stabilizer dimension of $\Bl^\pi_{\iy^\maxlocus} \iy$
drops and since we have a closed embedding $\ix'=\Bl^\pi_{\ix^\maxlocus} \ix\to
\Bl^\pi_{\iy^\maxlocus} \iy$, the result follows.

\subsection{Functoriality for strong morphisms of singular stacks}
In the singular case, we have the following weaker version of
Theorem \ref{thm.reichsteinfunctorial}.
Let $f\colon \iy\to \ix$ be a strong morphism of stacks with stable good moduli
space morphisms $\pi_\ix \colon \ix \to \bX$, $\pi_\iy \colon
\iy \to \bY$. Assume that every point of
both $\ix^s$ and $\iy^s$ has
stabilizer of a fixed dimension $n$; e.g., that $\ix$ and $\iy$ 
are irreducible or properly
stable. Let $\ix'\to \ix$ and $\iy'\to \iy$ be the canonical morphisms produced
by Theorem~\ref{thm.main}.
\begin{proposition}[Functoriality for strong morphisms]
  Under the assumptions above, $\iy'$ is the schematic closure
  of $\iy^s$ in $\iy\times_\ix \ix'$.
  \end{proposition}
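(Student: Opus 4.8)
The plan is to realize the output $\iy'=\iy_n$ of the canonical algorithm for $\iy$ as a schematic closure of $\iy^s$ inside the strong base changes of the $\ix$-sequence, by running a \emph{padded} version of the algorithm indexed by the $\ix$-steps. Throughout set $\iw_\ell:=\iy\times_\ix\ix_\ell$. Since $f$ is strong, $\iy=\ix\times_\bX\bY$, hence $\iw_\ell=\ix_\ell\times_{\bX_\ell}\bY_\ell$ with $\bY_\ell:=\bY\times_\bX\bX_\ell$, so $\iw_\ell\to\ix_\ell$ is again strong, in particular stabilizer-preserving, with good moduli space $\bY_\ell$. Because $\pi$ has constant stabilizer dimension $n$ on $\ix^s$, the remark following Lemma~\ref{lem.maxlocus-singular} gives $\ix=\overline{\ix^\ast}$, so the centers of the $\ix$-sequence are the full maximal loci $\ic_\ell=\ix_\ell^\maxlocus$, of strictly decreasing stabilizer dimension $N_0>N_1>\dots$; the same holds for $\iy$. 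Using $\iy^s=f^{-1}(\ix^s)$ (stability is preserved under base change of the moduli space, as in the proof of Proposition~\ref{prop.stable}) together with the fact that each $\ix_\ell\to\ix$ is an isomorphism over $\ix^s$ (Theorem~\ref{thm.main}), the section $\iy^s\hookrightarrow\iw_\ell$ over the isomorphism locus is available for all $\ell$, and I let $Z_\ell$ be the schematic closure of $\iy^s$ in $\iw_\ell$. Replacing $\iy$ by $\overline{\iy^s}$ if necessary—which changes neither $\iy_n$, by the schematic density of $\iy^s\subset\iy_n$ in Theorem~\ref{thm.main}\itemref{item.mainthm.final.exc}, nor the assertion—I may assume $Z_0=\iy$.

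The heart of the argument is a recursion for the $Z_\ell$. The strong base change $\iw_\ell=\ix_\ell\times_{\bX_\ell}\bY_\ell$ carries the pulled-back center $\iw_\ell^{[N_\ell]}:=\ix_\ell^\maxlocus\times_{\ix_\ell}\iw_\ell$, and by Proposition~\ref{prop.saturatedblowups-and-basechange}\itemref{item.saturatedblowup.basechange} its saturated blowup is the strict transform of $\iw_\ell\to\ix_\ell$, a closed substack of $\iw_{\ell+1}$. Applying Proposition~\ref{prop.saturatedblowups-and-basechange}\itemref{item.saturatedblowup.closed} to the closed substack $Z_\ell\hookrightarrow\iw_\ell$ then identifies $\Bl^\pi_{\iw_\ell^{[N_\ell]}\cap Z_\ell}(Z_\ell)$ with the strict transform of $Z_\ell$, that is, with the schematic closure of $Z_\ell\smallsetminus\pi^{-1}\bigl(\pi(\iw_\ell^{[N_\ell]})\bigr)$ in $\iw_{\ell+1}$. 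Since $\iy^s$ is stable, hence disjoint from the saturation of the non-stable locus $\iw_\ell^{[N_\ell]}$, this open substack contains the schematically dense $\iy^s$ and is therefore itself schematically dense in $Z_\ell$; its closure in $\iw_{\ell+1}$ is thus exactly $Z_{\ell+1}$. This gives the recursion
\[
Z_{\ell+1}=\Bl^\pi_{\,\iw_\ell^{[N_\ell]}\cap Z_\ell}(Z_\ell).
\]

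Now $Z_\ell\hookrightarrow\iw_\ell$ is stabilizer-preserving, so $\iw_\ell^{[N_\ell]}\cap Z_\ell=\{z\in|Z_\ell|:\dim G_z=N_\ell\}$. If $Z_\ell$ has no point of stabilizer dimension $N_\ell$ this set is empty and $Z_{\ell+1}=Z_\ell$ is a trivial step. Otherwise $N_\ell$ is the maximal stabilizer dimension of $Z_\ell$ (it cannot exceed $N_\ell$, as $Z_\ell\subseteq\iw_\ell$); then $\iw_\ell$ itself has maximal stabilizer dimension $N_\ell$, so $\iw_\ell^{[N_\ell]}=\iw_\ell^\maxlocus$, and by Proposition~\ref{prop.ixmax-singular} applied to the stabilizer-preserving closed immersion $Z_\ell\hookrightarrow\iw_\ell$ of equal maximal dimension we get $\iw_\ell^{[N_\ell]}\cap Z_\ell=Z_\ell^\maxlocus$ as closed substacks (with the correct, possibly non-reduced, scheme structure). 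Hence $Z_{\ell+1}=\Bl^\pi_{Z_\ell^\maxlocus}(Z_\ell)$ is a genuine step of the canonical algorithm for $Z_\ell$. As the $N_\ell$ strictly decrease, the nontrivial steps occur precisely when $N_\ell$ equals the current maximal stabilizer dimension of $Z_\ell$, and each blows up the maximal-stabilizer substack of the current stack in strictly decreasing dimension—which is exactly the recipe of Theorem~\ref{thm.main} applied to $Z_0=\iy$. Therefore the nontrivial steps of the tower $(Z_\ell)$ reproduce the canonical sequence $\iy_n\to\dots\to\iy_0=\iy$, so $Z_n=\iy_n=\iy'$; since $Z_n$ is by construction the schematic closure of $\iy^s$ in $\iw_n=\iy\times_\ix\ix'$, the proposition follows.

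The main obstacle, and the reason the naive step-by-step comparison of the smooth case (Theorem~\ref{thm.reichsteinfunctorial}) breaks down, is that the $\ix$- and $\iy$-sequences need not be synchronized: when $\bY\to\bX$ is not faithfully flat the intermediate morphisms $\iy_\ell\to\ix_\ell$ fail to be strong, and $\iy$ may be forced to blow up a stratum that $\ix$ reaches only several steps later. The device above sidesteps this by never matching the two sequences directly, but instead padding the $\iy$-algorithm with trivial steps indexed by the $\ix$-sequence. The points needing genuine care are the verification, in the nontrivial case, that $\iw_\ell^{[N_\ell]}\cap Z_\ell$ carries the canonical scheme structure of $Z_\ell^\maxlocus$ (via Proposition~\ref{prop.ixmax-singular}), and the bookkeeping that $\iy^s$ stays schematically dense at every stage, so that each strict transform is indeed the schematic closure of $\iy^s$.
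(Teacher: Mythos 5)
Your proof is correct and takes essentially the same route as the paper's: both compare the two sequences step by step using the strict-transform characterization of saturated blowups (Proposition~\ref{prop.saturatedblowups-and-basechange}), treat steps whose center misses $\iy$ as trivial ``padding'', and finish with the schematic density of $\iy^s$ in the final stack, though your write-up makes the desynchronization bookkeeping explicit where the paper's three-line proof leaves it implicit. The one soft spot is the opening reduction to $Z_0=\iy$ via $\overline{\iy^s}$ --- the justification offered is essentially an instance of the functoriality being proved --- but this step is inessential, since your recursion only needs $\iy^s$ to be schematically dense in each $Z_\ell$, which holds by construction.
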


\begin{proof}
  Under our assumptions $\ix'$ and $\iy'$
  are produced by taking repeated saturated blowups in $\ix^\maxlocus$ and
  $\iy^\maxlocus$ respectively (Remark \ref{rem.singular.simplercase}).
  As in the proof of
  Theorem~\ref{thm.reichsteinfunctorial}, either $f^{-1}(\ix^\maxlocus)=\emptyset$
  which is trivial, or $f^{-1}(\ix^\maxlocus)=\iy^\maxlocus$.
  In the latter case, Proposition~\ref{prop.saturatedblowups-and-basechange} tells
  us that the saturated blowup
  $\Bl^{\pi_\iy}_{\iy^\maxlocus}\iy\to \iy$ is the strict transform of
  $\Bl^{\pi_\ix}_{\ix^\maxlocus}\ix\to \ix$ in the sense of Definition
  \ref{def.saturated-strict-transform}; that is, it is the schematic closure of
  $\iy\smallsetminus \iy^\maxlocus$ in $\iy\times_\ix
  \Bl^{\pi_\ix}_{\ix^\maxlocus}\ix$.
  After the final blowup, $\iy^s$ becomes
  schematically dense in $\iy'$ and the result follows.
\end{proof}

\begin{example}\label{ex.lci-sing}
  The following example shows that if $\iy$ is singular, then
  $\iy'$ need not equal the fiber product $\iy \times_\ix \ix'$ even if
  the morphism $\iy \to \ix$ is lci.
  
Let $\ix=[\A^3/\GG_m]$ where $\GG_m$ acts by weights $(1,1,-1)$. Then
$\ix^\maxlocus$ is the origin.  Let $\iy\subset \ix$ be the closed substack
defined by the ideal $(xz)$. Since $xz$ is invariant, $f\colon \iy\to \ix$ is a
strong regular embedding. The induced morphism of good moduli spaces is the
closed
immersion $\A^1\to \A^2$.

Since the maximum dimension of a stabilizer is one, the
canonical reduction of stabilizers is obtained by a single saturated blowup
along $\ix^\maxlocus$. Since $\ix$ is smooth, this is also the Reichstein transform.
The saturation of $\ix^\maxlocus$ is the substack defined by the ideal
$(xz,yz)$ which is the union of two irreducible substacks: the divisor $V(z)$ and
the codimension-two substack $V(x,y)$.
We conclude
that 
$\ix'=\bigl[\Spec
  k[\frac{x}{z},\frac{y}{z},z]\smallsetminus V(\frac{x}{z},\frac{y}{z})/\GG_m\bigr]$.
and thus
$\ix'\times_\ix \iy\subset \ix'$ is the closed substack defined by the
ideal $(z^2\frac{x}{z})$. On the other hand, the strict transform of $\iy$ is
$\iy'=\Bl^{\pi_\iy}_{\iy^\maxlocus} \iy$ which is cut out by $(\frac{x}{z})$.
\end{example}

\begin{remark}[Symmetric and intrinsic blowups]\label{rem.intrinsic-blowups}
Let $\ix$ be a stack with a properly stable good moduli space. Then our
algorithm repeatedly makes saturated blowups in
$\ix^\maxlocus$. An alternative would be to replace the saturated blowup by the
saturated \emph{symmetric blowup}: if $\ix^\maxlocus$ is defined by the
ideal sheaf $\iJ_\ix$, we may take the saturated Proj of the symmetric algebra
$\Sym(\iJ_\ix)$ instead of the saturated blowup which is the saturated Proj of
the
Rees algebra $\bigoplus_{k\geq 0} \iJ_\ix^k$. When $\ix$ is smooth, then so is $\ix^\maxlocus$ and the symmetric
algebra of $\iJ_\ix$ coincides with the Rees algebra of $\iJ_\ix$. If $\ix\to \iy$ is
a closed embedding into a smooth stack with a good moduli space, then
$\bigoplus_{k\geq 0} \iJ_\iy^k=\Sym(\iJ_\iy)\to \Sym(\iJ_\iy\otimes_{\cO_\iy}
\cO_\ix)\to \Sym(\iJ_\ix)\to \bigoplus_{k\geq 0} \iJ_\ix^k$ are surjective. We
thus have closed embeddings
\[
\Bl_{\ix^\maxlocus} \ix \to \SymBl_{\ix^\maxlocus} \ix \to (\Bl_{\iy^\maxlocus} \iy)\times_\iy \ix\to \Bl_{\iy^\maxlocus} \iy.
\]
In particular, the maximum stabilizer dimension of $\SymBl^\pi_{\ix^\maxlocus} \ix$
also drops.

For a quasi-projective scheme $U$ over $\CC$ with an action of a reductive
group $G$, Kiem, Li and Savvas~\cite{KiLi:13, KLS:17} have defined the
\emph{intrinsic blowup} $\Bl^G U$ of $U$. If $U\to V$ is a $G$-equivariant
embedding into a smooth scheme, then we have an induced embedding of stacks
$\ix=[U^{ss}/G]\to \iy=[V^{ss}/G]$ that admit good moduli spaces. The stack
$[\Bl^G U / G]$ is a closed substack of $(\Bl_{\iy^\maxlocus}
\iy)\times_\iy \ix$ that is slightly larger than both the blowup and the
symmetric blowup of $\ix$ in $\ix^\maxlocus$ and is independent on the choice
of a smooth embedding. It would be interesting to find a definition of the
intrinsic blowup for a stack with a good moduli space that (1) does not use
smooth embeddings and (2) does not use a presentation $\ix=[U^{ss}/G]$.
The intrinsic blowup also seems to be related to derived stacks
and blowups of such and it would be interesting to describe this relationship.
\end{remark}


\appendix
\section{The local structure theorem}
The simple version of the local structure theorem that we mainly use in this
paper is the following \cite[Theorem 4.12]{AHR:15}. Let $\ix$ be an algebraic
stack of finite type over an algebraically closed field $k$ with good moduli
space $\bX$. If $x\in |\ix|$ is a closed point with stabilizer $G_x$, then
there exists a cartesian diagram of stacks and good moduli spaces
\vspace{-3mm}
\[
\xymatrix{%
[U/G_x]\ar[r]\ar[d] & \ix\ar[d]^{\pi}\\
U\gitq G_x\ar[r] & \bX\ar@{}[ul]|\square
}
\]
where the horizontal maps are \'etale, $U$ is affine and there is a
closed point $u\in U$ above $x$. Note that $[U/G_x]\to \ix$ is stabilizer
preserving so $u$ is fixed by $G_x$.

When $k$ is merely perfect, it is not difficult to see that we can obtain such
a diagram after passing to a finite separable extension of $k$.  We will now
give a more general version from~\cite{AHR:19} that applies to any field $k$.
In fact, it does not even require $\ix$ to be of finite type over a field.

\begin{theorem}\label{thm.localstructuregeneral}
Let $\ix$ be a noetherian algebraic stack with good moduli space $\bX$. Let
$x\in |\ix|$ be a closed point. Then there exists
a group scheme $G\to \Spec \ZZ$, affine schemes $U$ and $V$ with actions of
$G$ and $\GL_n$,
a $G$-fixed closed point $u\in U$ and a commutative diagram
\vspace{-3mm}
\[
\xymatrix{%
[U/G]\ar[r]^-f\ar[d] & [V/\GL_n]\ar[r]^-g\ar[d] & \ix\ar[d]^{\pi}\\
U\gitq G\ar[r]    & V\gitq \GL_n\ar[r]^-h & \bX\ar@{}[ul]|\square
}
\]
where $f$ is finite \'etale, $g$ and $h$ are \'etale and $g(f(u))=x$.
Moreover,
\begin{enumerate}
\item $G_u$ is the connected component of the stabilizer group of $\Spec \kappa(u)\to U\to \ix$,
\item if $\characteristic(\kappa(x))=0$, then $G\to \Spec \ZZ$ is a Chevalley
  group, in particular smooth with connected fibers, and
\item if $\characteristic(\kappa(x))=p$, then $G\to \Spec \ZZ$ is
  diagonalizable.
\end{enumerate}
\end{theorem}
\begin{proof}
First note that $\bX$ is noetherian and $\pi$ is of finite type \cite[Theorem A.1]{AHR:15}. The result is now \cite[Corollary 13.4]{AHR:19}.
\end{proof}

Note that $U\gitq G\to V\gitq \GL_n$ is always finite but not necessarily
\'etale and that the left square is not cartesian unless $x$ has connected
stabilizer group.


\section{Gerbes and good moduli spaces}
Let $\pi\colon \ix\to \bX$ be a good moduli space morphism.  In this appendix,
we study when $\pi$ factors as $\ix\to \ix_\tame\to \bX$ where $\ix\to
\ix_\tame$ is a gerbe and $\ix_\tame\to \bX$ is a coarse moduli space. A
necessary condition is that the stabilizers of $\ix$ have locally constant
dimension. This is sufficient when $\ix$ is reduced
(Proposition~\ref{prop.tamegerbe}) and then $\ix_\tame$ is a tame stack.
In this appendix, we prove this when $\ix$ is of finite type over an algebraically
closed field.
The general case follows from Corollary~\ref{cor.tamegerbe-singular} where we
also give a precise condition when $\ix$ is not reduced.
When there is a factorization $\ix\to \ix_\tame\to \bX$ as above, then $\ix\to
\bX$ is a homeomorphism and in fact a coarse moduli space in the sense
of~\cite[Definition 6.8]{Rydh:13}.

\begin{proposition} \label{prop.tameisgood}
Let $\pi \colon \ix \to \bX$ be the good moduli
space of a stack such that all stabilizers are 0-dimensional. 
Then $\ix$ is a tame stack and $\bX$ is also the coarse
moduli space of $\ix$. Moreover, $\ix$ is separated if and only $\bX$ is separated.
\end{proposition}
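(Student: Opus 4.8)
The plan is to establish that $\ix$ has finite inertia and linearly reductive stabilizers, so that $\ix$ is a tame stack; to identify its Keel--Mori coarse space with $\bX$; and finally to transport separatedness back and forth across $\pi$. Throughout I would work \'etale-locally on $\bX$ using the local structure theorem \cite[Theorem 2.9, Theorem 1.1]{AHR:15}.

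First I would fix a closed point $x$. Its stabilizer $G_x$ is affine (affine diagonal), $0$-dimensional, hence finite, and linearly reductive by \cite[Proposition 12.14]{Alp:13}. The local structure theorem then supplies a cartesian square with \'etale horizontal arrows
\[
\xymatrix{
[U/G_x]\ar[r]\ar[d] & \ix\ar[d]^{\pi}\\
U\gitq G_x\ar[r] & \bX
}
\]
where $U=\Spec A$ is affine and $x$ is in the image. Since $G_x$ is finite, $[U/G_x]$ has finite inertia, and every point of $[U/G_x]$ has stabilizer a closed subgroup scheme of the finite linearly reductive group $G_x$, hence linearly reductive. Because forming inertia commutes with base change along $\bX$ (and $\bX$ is an algebraic space, so relative and absolute inertia agree), the cartesian square gives $I\ix\times_\bX (U\gitq G_x)=I[U/G_x]$, which is finite over $[U/G_x]$. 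As the \'etale maps $U\gitq G_x\to \bX$ have open images covering the (finitely many, by noetherianity) closed points of $\bX$, finiteness of $I\ix\to \ix$ (an fppf-local property on the target) follows, and likewise every stabilizer of $\ix$ is linearly reductive. \emph{This upgrade from $0$-dimensional (quasi-finite) inertia to finite (proper) inertia is the crux of the argument}; it is exactly what the presentation $[U/G_x]$ with $G_x$ finite provides.

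Now $\ix$ has finite inertia and linearly reductive stabilizers, so it is a tame stack in the sense of \cite{AOV:08}. By the Keel--Mori theorem it admits a coarse moduli space $c\colon \ix\to \bX'$, and by tameness $c$ is itself a good moduli space morphism \cite[Example 8.1]{Alp:13}; by the uniqueness of good moduli spaces \cite[Theorem 6.6]{Alp:13} we get $\bX'\cong \bX$ compatibly with $\pi$ and $c$, so $\bX$ is the coarse moduli space of $\ix$. (As a consistency check, since all stabilizers are $0$-dimensional, \cite[Proposition 9.1]{Alp:13} prevents any fiber $\pi^{-1}(\pi(x))$ from containing two points; thus $|\ix|\to|\bX|$ is a bijection, and being also universally closed, $\pi$ is a homeomorphism.)

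Finally, for the separatedness equivalence I would first observe that $\pi$ is proper. \'Etale-locally on $\bX$ it is the morphism $[U/G_x]\to U\gitq G_x$, which is universally closed (good moduli space morphism) and separated: as $G_x$ is finite, the diagonal of $[U/G_x]$ over $U\gitq G_x$ is finite. Properness being \'etale-local on the target, $\pi$ is proper and surjective. If $\bX$ is separated over $k$, then $\ix\to\bX\to\Spec k$ is a composite of separated morphisms, so $\ix$ is separated. Conversely, if $\ix$ is separated over $k$, then separatedness of $\bX$ over $k$ follows by descent along the proper surjection $\pi$: one verifies the valuative criterion for $\bX$ by lifting the test data to $\ix$ after a suitable extension of the valuation ring and invoking separatedness of $\ix$. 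This yields the stated equivalence and completes the plan.
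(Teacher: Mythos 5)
Your proof is correct, but it takes a genuinely different route from the paper's at both key steps. For the crux --- upgrading quasi-finite inertia to finite inertia --- the paper simply observes that $0$-dimensional stabilizers plus affine diagonal give a quasi-finite, separated diagonal and then quotes \cite[Theorem 8.3.2]{Alp:14}, which says that a stack with quasi-finite separated diagonal admitting a good moduli space has finite inertia; you instead re-derive this from the local structure theorem of \cite{AHR:15}, presenting $\ix$ \'etale-locally over $\bX$ as $[U/G_x]$ with $G_x$ finite and linearly reductive and descending finiteness of the inertia. Your argument is essentially a proof of the special case of Alper's theorem that is needed here, so it is more self-contained but longer; the paper's is a one-line citation. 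For the separatedness equivalence, the paper notes that $\pi$ is a proper universal homeomorphism and cites \cite[Theorem 1.1(2)]{Con:05}, whereas you prove the nontrivial direction ($\ix$ separated $\Rightarrow$ $\bX$ separated) by hand via descent of separatedness along the proper surjection $\pi$; that is a standard and correct argument, though you only sketch it.

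One small slip: a noetherian algebraic space does \emph{not} have finitely many closed points (consider $\A^1$), so the parenthetical ``finitely many, by noetherianity'' is false. This does not damage the argument: the images of the \'etale charts are open and contain every closed point of $\bX$, and since every point of the noetherian space $|\bX|$ specializes to a closed point, these opens already cover $\bX$ (quasi-compactness then extracts a finite subcover if one is wanted). You should phrase the covering step that way rather than by counting closed points.
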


\begin{proof}
By assumption, $\ix$ has quasi-finite and separated diagonal (recall that our
stacks have affine, hence separated, diagonals). Since $\ix$ has a good moduli
space, it follows that $\ix$ has finite inertia~\cite[Theorem 8.3.2]{Alp:14},
that is, $\ix$ is tame and $\bX$ is its coarse moduli space. Moreover, $\pi$ is
a proper universal homeomorphism, so $\ix$ is separated if and only if $\bX$ is
separated~\cite[Theorem 1.1(2)]{Con:05}.
\end{proof}
We will now generalize the previous proposition to stacks with constant dimensional stabilizers.
\begin{proposition} \label{prop.tamegerbe}
Let $\ix$ be a reduced
Artin stack with good moduli space $\pi \colon \ix \to \bX$.
If the dimension of the stabilizers of points of $\ix$ is constant,
then $\ix$ is a smooth gerbe over a tame stack $\ix_\tame$ whose coarse space
is $\bX$. In particular, if $\ix$ is smooth, then $\ix_\tame$ is smooth
and $\bX$ has tame quotient singularities.
\end{proposition}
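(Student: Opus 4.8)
The plan is to realize $\ix_\tame$ as the rigidification of $\ix$ along the relative identity component of its inertia, and then to extract tameness and the identification of $\bX$ from the local structure theorem of \cite{AHR:15}.

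\emph{Constructing the subgroup to rigidify.} Let $I\ix\to\ix$ be the inertia and let $I^0\subset I\ix$ be its fibrewise identity component. I claim that $I^0$ is a smooth, normal subgroup stack of $I\ix$, flat and finitely presented over $\ix$. This assertion is \'etale-local on $\bX$, so by \cite{AHR:15} I may assume $\ix=[U/G]$, where $U=\Spec A$ is affine, $G=G_x$ is the linearly reductive (hence smooth) stabilizer of a closed point $x$, and $U\gitq G$ is the good moduli space; note that $U$ is reduced since $\ix$ is. The key point is that $G^0$ acts trivially on $U$. Indeed, the common stabilizer dimension is $d=\dim G_x=\dim G$, so every point $u$ has $\dim G_u=d$, which forces $(G_u)_\red\supseteq G^0$ and hence $G^0\subseteq G_u$. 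Thus the action morphism and the projection $G^0\times U\to U$ agree on every closed point of the reduced finite-type scheme $G^0\times U$, and therefore coincide. Consequently the relative identity component of $I_U=\{(u,g):gu=u\}$ is exactly $U\times G^0$, which is smooth with connected fibres and open in $I_U$; since $G^0\trianglelefteq G$, these local pieces glue to the claimed $I^0$.

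\emph{Rigidifying.} I then form $\ix_\tame:=\ix\thickslash I^0$, so that $f\colon\ix\to\ix_\tame$ is a gerbe with relative inertia $I^0$ \cite{AOV:08}. Its inertia is $I\ix/I^0$, whose fibre over $u$ is $G_u/G^0$, a closed subgroup scheme of the finite group $\Gamma:=G/G^0$; locally $\ix_\tame=[U/\Gamma]$. Hence $\ix_\tame$ has finite inertia with linearly reductive stabilizers (of order prime to the characteristic by Nagata's theorem), so it is a tame stack. As the band $I^0$ is linearly reductive, $f$ is cohomologically affine with $\cO_{\ix_\tame}\xrightarrow{\sim}f_*\cO_\ix$, i.e.\ a good moduli space morphism. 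Writing $\pi=h\circ f$ with $h\colon\ix_\tame\to\bX$, a cancellation argument gives that $h$ is a good moduli space morphism: pull a surjection on $\ix_\tame$ back along $f$, apply the exact functor $\pi_*=h_*f_*$, and use $f_*f^*\cong\id$ to conclude that $h_*$ preserves surjections; moreover $\cO_\bX\cong\pi_*\cO_\ix=h_*\cO_{\ix_\tame}$. Since $\ix_\tame$ is tame, its good moduli space is its coarse space, so $\bX$ is the coarse space of $\ix_\tame$.

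\emph{The smooth case and the main obstacle.} If $\ix$ is smooth, then since the gerbe $f$ is smooth and surjective, smoothness descends and $\ix_\tame$ is smooth; in the local presentation $[U/\Gamma]$ this makes $U$ smooth with $\Gamma$ finite linearly reductive, so $\bX=U\gitq\Gamma$ has tame quotient singularities. I expect the main obstacle to be the first step, namely verifying that $I^0$ is a genuine flat (smooth) normal subgroup stack; the decisive input there is the interplay of constant stabilizer dimension with reducedness, which is exactly what forces $G^0$ to act trivially on the slice $U$.
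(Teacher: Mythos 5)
Your overall strategy---rigidify $\ix$ along a smooth normal subgroup of the inertia with connected $d$-dimensional fibres, observe that the rigidification is tame, and identify $\bX$ as its coarse space---is exactly the paper's (Proposition~\ref{prop.redidenty.inertia} together with the proof of Proposition~\ref{prop.tamegerbe}), and in characteristic zero your argument is complete. The gap is in positive characteristic, precisely in the step you flag as the main obstacle. A linearly reductive group in characteristic $p$ need not be smooth ($\Gmu_p$ is linearly reductive and non-smooth), and its identity component $G^0$ need not be reduced; Nagata's theorem makes $G^0$ diagonalizable, not smooth. So your parenthetical ``linearly reductive (hence smooth)'' is false, and the deduction ``$\dim G_u=d$ forces $(G_u)_\red\supseteq G^0$'' fails: constancy of the stabilizer dimension only forces the \emph{reduced} identity component $G_0:=(G^0)_\red$ to lie in every $G_u$. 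Concretely, let $G=\GG_m\times\Gmu_p$ act on $U=\A^1$ with $\GG_m$ acting trivially and $\Gmu_p$ with weight $1$: the stack $[U/G]$ is reduced with good moduli space $\Spec k[x^p]$ and every stabilizer has dimension $1$, yet $G^0=G$ does not act trivially and $G^0\times U$ is not even contained in the inertia. The subgroup to rigidify is $G_0\times U=\GG_m\times U$, which is what the paper uses. Relatedly, the ``fibrewise identity component'' $I^0$ is the wrong object in characteristic $p$: for $[\A^1/\Gmu_p]$ with weight $1$ (constant stabilizer dimension $0$) the fibrewise identity component is all of $I\ix$, which is not flat over $\ix$, whereas the group that must be removed is trivial.

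Two further points are suppressed by your route and must be addressed once $G_0$ replaces $G^0$. First, normality of $G_0$ in $G$ is not automatic---it fails for $\GG_m\ltimes\Galpha_p$---and is rescued only because $G^0$ is diagonalizable by Nagata, whence $G_0$ is characteristic in $G^0$ by Cartier duality (Remark~\ref{rem.normality}). Second, the quotient band $G/G_0$ contains the infinitesimal piece $G^0/G_0$, so it is finite and linearly reductive but \emph{not} of order prime to $p$ as you assert; tameness still holds, but for the former reason. With $G^0$ replaced by $G_0$ throughout and these two points supplied, your argument becomes the paper's proof; your cancellation argument identifying $\bX$ as the coarse space of $\ix_\tame$ is a fine alternative to the paper's appeal to the universal property of the rigidification.
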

\subsection{Reduced identity components}\label{ss.reduced.identity}
We begin with some preliminary results on reduced identity components
of group schemes in positive characteristic.

Let $G$ be an algebraic group of dimension $n$ over a perfect field $k$.
By \cite[Expos\'e VIa, Proposition 2.3.1]{SGA3}
or \cite[Tag \spref{0B7R}]{stacks-project} the identity
component $G^0$ of $G$ is an open and closed characteristic subgroup. Let
$G_0 = (G^0)_\red$ (non-standard notation). Since the field is perfect,
$G_0$ is a closed, smooth, subgroup scheme of $G^0$
\cite[Expos\'e VIa, 0.2]{SGA3} or \cite[Tag \spref{047R}]{stacks-project}.
Moreover, $\dim G_0 = \dim G = n$.

\begin{remark}\label{rem.normality}
In general, $G_0$ is not normal in $G^0$, for example, take $G=\GG_m\ltimes
\Galpha_p$. But if $G^0$ is diagonalizable, then $G_0\subset G^0$ is
characteristic, hence $G_0\subset G$ is normal.
Indeed, this follows from Cartier duality, since
the torsion subgroup of an abelian group is a characteristic subgroup.
\end{remark}

\begin{lemma} \label{lem.redidentity}
Let $S$ be a scheme and let $G\to S$ be a group scheme of finite type such that
$s\mapsto \dim G_s$ is locally constant. Let $H\subset G$ be a subgroup
scheme such that $H_{\overline{s}}=G_{\overline{s},0}$ for every geometric
point $\overline{s}\colon \Spec K\to S$. If $S$ is reduced, then there is at
most one such $H$ and $H\to S$ is smooth.
\end{lemma}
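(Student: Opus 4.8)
The plan is to establish smoothness of $H\to S$ first and then to deduce uniqueness from it. Everything is local on $S$, so I would take $S=\Spec R$ with $R$ reduced and Noetherian (harmless, since all schemes here are of finite type over a field). By construction each geometric fibre $H_{\overline{s}}=G_{\overline{s},0}$ is the reduced identity component of a group scheme over an algebraically closed field; hence it is smooth (a reduced group scheme over a perfect field is smooth, \spref{047R}) and connected, so geometrically integral, of dimension $\dim G_{\overline{s}}=n$, which is locally constant by hypothesis. In particular every geometric fibre of $f\colon H\to S$ is smooth. By the fibrewise criterion for smoothness, a morphism locally of finite presentation with smooth geometric fibres is smooth precisely at the points where it is flat, so the entire smoothness assertion reduces to showing that $f$ is flat.

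Proving flatness over the reduced, but possibly singular, base $S$ is the heart of the matter and the step I expect to be the main obstacle. By generic flatness $f$ is flat over a dense open $U\subseteq S$, and the flat locus is open, so it remains to propagate flatness across the complement of $U$. I would do this with the valuative criterion of flatness: since $R$ is reduced, it suffices to check that $H\times_S T\to T$ is flat for every trait $T\to S$. For such a $T$ both the generic and the special fibre of $H_T\to T$ are integral of the same dimension $n$, so no irreducible component of $H_T$ can lie in the closed fibre; equivalently $H_T$ has no associated point over the closed point of $T$, and a finite-type scheme over a discrete valuation ring with this property is flat. (Alternatively, one may argue by the homogeneity of the flat locus of a group scheme, \cite{SGA3}, Exp.~$\mathrm{VI}_B$: the flat locus meets each fibre in a translation-invariant open subset of the integral group $H_s$, hence is empty or everything there, so it is enough to know it meets every fibre.) The delicate point, and the reason reducedness of $S$ is indispensable, is precisely the verification that the integral, equidimensional fibres preclude vertical or embedded components after pullback to a trait; this is the phenomenon that breaks down over a non-reduced base, where reduced identity components need not vary in a flat family and no such $H$ exists.

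With $f$ flat and with smooth geometric fibres, the fibrewise criterion yields that $H\to S$ is smooth, and uniqueness is then formal. A smooth morphism over a reduced base has reduced total space, so any $H$ and $H'$ as in the statement are both reduced. The immersions $H,H'\hookrightarrow G$ are monomorphisms, hence injective on points, so each is determined on underlying spaces by its geometric fibres; since $H_{\overline{s}}=G_{\overline{s},0}=H'_{\overline{s}}$ for all geometric points, $H$ and $H'$ have the same image in $|G|$. Two reduced (locally closed) subschemes of $G$ with the same underlying set both carry the reduced induced structure on that set, so $H=H'$.
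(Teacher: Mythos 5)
Your route is genuinely different from the paper's. The paper disposes of smoothness in one line by citing \cite[Expos\'e VI$_{\mathrm{B}}$, Corollaire 4.4]{SGA3} (reduced base, locally constant fibre dimension and smooth connected fibres give smoothness), and proves uniqueness by observing that $H_1\cap H_2$ is again a subgroup with the same geometric fibres, hence smooth, hence flat, hence equal to each $H_i$ by the fibrewise criterion of flatness. Your uniqueness argument --- smooth over a reduced base implies reduced total space, and a reduced locally closed subscheme of $G$ is determined by its underlying set --- is correct and, if anything, cleaner. Your decision to re-prove the smoothness from scratch via the valuative criterion of flatness is legitimate in the Noetherian setting (which suffices for the paper's applications, though the lemma is stated for arbitrary $S$ and the SGA3 result needs no Noetherian hypothesis).

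However, the step you yourself flag as delicate is where the written argument has a real gap. Over a trait $T$ with closed point $t$ and uniformizer $\pi$, you assert that ``no irreducible component of $H_T$ lies in the closed fibre'' is \emph{equivalent} to ``$H_T$ has no associated point over $t$''. It is not: associated points include embedded points, and the absence of vertical irreducible components says nothing about $\pi$-torsion supported on an embedded component over $t$. Excluding that torsion is precisely the content needed for flatness, and it genuinely uses the integrality and equidimensionality of the fibres: if $I\subset \cO_{H_T}$ is the $\pi$-power-torsion ideal, then $\cO_{H_T}/I$ is $T$-flat, so its special fibre has dimension $n$; hence the image of $I$ in the integral $n$-dimensional ring $\cO_{H_t}$ cuts out a closed subscheme of full dimension and must vanish, giving $I\subseteq \pi\cO_{H_T}$, then $I\subseteq\bigcap_m \pi^m\cO_{H_T}$ by induction, and finally $I=0$ by Artin--Rees combined with the torsion hypothesis. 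With this (or an explicit appeal to a lemma of the form ``a finite-type scheme over a DVR with geometrically integral fibres of the same dimension is flat'') inserted, your proof closes; as written, the word ``equivalently'' papers over the main point.
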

\begin{proof}
If $S$ is reduced, then $H\to S$ is smooth~\cite[Expos\'e VIb, Corollaire
  4.4]{SGA3}. If $H_1$ and $H_2$ are two different subgroup schemes as in the lemma,
then so is $H_1\cap H_2$. In particular, $H_1\cap H_2$ is also smooth.  By the
fiberwise criterion of flatness, it follows that $H_1\cap H_2=H_1=H_2$.
\end{proof}

Note that the lemma is also valid if $S$ is a reduced algebraic stack by
passing to a smooth presentation.

\begin{definition}
If $S$ is reduced and there exists a subgroup scheme $H\subset G$ as in the lemma,
then we say that $H$ is the \emph{reduced identity component} of $G$ and denote
it by~$G_0$.
\end{definition}

\begin{proposition}\label{prop.redidenty.inertia}
Let $\ix$ be a reduced algebraic stack 
such that every stabilizer has dimension
$d$. If $\ix$ admits a good moduli space,
then there exists a unique normal closed subgroup stack $(I \ix)_0\subset I\ix$
such that $(I\ix)_0\to \ix$ is smooth with connected fibers of dimension $d$.
Moreover, $I\ix/(I\ix)_0\to \ix$ is finite.
\end{proposition}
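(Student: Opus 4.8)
The plan is to build $(I\ix)_0$ fiberwise as the reduced identity component and to assemble it using the uniqueness already in hand from Lemma~\ref{lem.redidentity}. First I would record two reductions. Since $\ix$ has affine diagonal, $I\ix\to \ix$ is affine, hence of finite presentation with affine fibers of dimension $d$. Next, any subgroup $H\subset I\ix$ that is smooth over $\ix$ with connected fibers of dimension $d$ automatically has $H_{\bar x}=(G_{\bar x})_0$: a smooth connected $d$-dimensional subgroup of the $d$-dimensional group $G_{\bar x}$ must equal $((G_{\bar x})^0)_{\red}$. Therefore Lemma~\ref{lem.redidentity} (valid over reduced stacks by passing to a smooth presentation) yields uniqueness immediately, and uniqueness makes the construction canonical and local in the smooth topology. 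So it suffices to produce $H$ after replacing $\ix$ by a smooth presentation or an \'etale local model, and then glue by uniqueness.

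For case~(1) I would argue as follows. In characteristic zero every group scheme over a field is smooth, so $I\ix\to\ix$ has geometrically reduced fibers; over the reduced base $\ix$ it is therefore smooth \cite[Expos\'e VIb, Corollaire 4.4]{SGA3}. As the fiber dimension is the constant $d$, the relative neutral component $(I\ix)^0$ is represented by an open subgroup scheme whose formation commutes with base change, smooth over $\ix$ with connected fibers of dimension $d$ \cite[Expos\'e VIb, Th\'eor\`eme 3.10]{SGA3}; it is normal because conjugation preserves the identity component fiberwise. I set $(I\ix)_0:=(I\ix)^0$. (No closedness or finiteness of the quotient is claimed in this case, which is appropriate since $(I\ix)^0$ is only open.)

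For case~(2) I would invoke the local structure theorem \cite[Theorem 1.2]{AHR:15} to assume \'etale-locally that $\ix\cong[\Spec B/G_x]$, where $G_x$ is the linearly reductive stabilizer of a closed point $x$ and $B$ is reduced. The crucial point is that the reduced identity component is \emph{constant} on this chart. Put $T:=(G_x^0)_{\red}$; since $\ix$ has a good moduli space, $G_x$ is linearly reductive, so by Nagata's theorem $G_x^0$ is diagonalizable, whence $T$ is a torus of dimension $d$ and $T\subset G_x$ is normal by Remark~\ref{rem.normality}. For every point $b$ the stabilizer $(G_x)_b\subset G_x$ has dimension $d$, its identity component lies in $G_x^0$, and so its reduced identity component is a subtorus of $T$ of dimension $d$, i.e.\ equals $T$. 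In particular $T\subset (G_x)_b$ for all $b$, so $T$ fixes every point of the reduced scheme $\Spec B$ and hence acts trivially on it. Thus $\Spec B\times T\hookrightarrow \{(b,g):gb=b\}$ is a closed subgroup with the right fibers, and $(I\ix)_0=[\Spec B\times T/G_x]$ on the chart: smooth with connected fibers of dimension $d$, and normal since $T$ is normal in $G_x$. Gluing by uniqueness produces the global $(I\ix)_0$. This construction also settles the two extra assertions: closedness of $(I\ix)_0\subset I\ix$ is local, and $\Spec B\times T\hookrightarrow\{(b,g):gb=b\}$ is a closed immersion; and $G_x/T$ is a finite group scheme (because $\dim G_x=\dim T=d$), so the $T$-stable closed subscheme $\{(b,g):gb=b\}$ has quotient by $T$ a closed subscheme of $\Spec B\times(G_x/T)$, finite over $\Spec B$, whence $I\ix/(I\ix)_0\to\ix$ is finite by smooth-local descent.

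I expect the main obstacle to be precisely case~(2) in positive characteristic, where $I\ix\to\ix$ is typically neither flat nor fiberwise reduced, so one cannot simply take a relative neutral component and then reduce. The device that resolves this is the dimension count forcing the reduced identity component to be the \emph{single} maximal torus $T$ of the linearly reductive stabilizer throughout an entire \cite{AHR:15} chart, together with the normality of $T$ furnished by diagonalizability (Remark~\ref{rem.normality}, i.e.\ Nagata). These are exactly the points at which the good moduli space hypothesis is indispensable, and they are what separate case~(2) from the softer SGA~3 argument available in case~(1).
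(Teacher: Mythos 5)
Your strategy is essentially the paper's: uniqueness from Lemma~\ref{lem.redidentity} (extended to reduced stacks via a smooth presentation), the SGA~3 neutral component in characteristic zero, and in case~(2) an \'etale chart $[\Spec B/G_x]$ on which the candidate subgroup is the constant group $G_0=(G_x^0)_{\red}$, with normality from Remark~\ref{rem.normality} and finiteness of the quotient from finiteness of $G_x/G_0$. There is, however, one genuine gap, located exactly at the step you single out as crucial: you assume the slice $\Spec B$ is reduced and conclude that $T=(G_x^0)_{\red}$ acts trivially on it because it fixes every point of a reduced scheme. In positive characteristic the stabilizer $G_x$ need not be smooth (it may contain $\Gmu_p$), so the $G_x$-torsor $\Spec B\to[\Spec B/G_x]$ is faithfully flat but not smooth; reducedness of the stack $[\Spec B/G_x]$ (which does follow from reducedness of $\ix$, the chart being \'etale) therefore does not force $B$ to be reduced, and the local structure theorem does not promise a reduced slice. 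Nor can you pass to $B_{\red}$: when $G_x$ is non-smooth the nilradical of $B$ need not be $G_x$-invariant (already for $\Gmu_p$ acting on itself by translation). Without reducedness of $B$, "$T$ fixes every point'' only says that the ideal generated by the nonzero $T$-weight spaces of $B$ is nilpotent, not zero, so the scheme-theoretic inclusion $T\times\Spec B\subset\{(g,b):gb=b\}$ does not follow.

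The repair is to run your fiberwise argument over the reduced \emph{stack} rather than the possibly non-reduced scheme: inside the group scheme $[G_x\times\Spec B/G_x]$ over $[\Spec B/G_x]$, both $[G_0\times\Spec B/G_x]$ and its intersection with the inertia $I([\Spec B/G_x])$ have every geometric fiber equal to $G_0$ (by your dimension count on stabilizers), so Lemma~\ref{lem.redidentity} --- smoothness over a reduced base plus the fiberwise criterion of flatness --- forces the intersection to equal $[G_0\times\Spec B/G_x]$, i.e.\ gives the desired closed embedding into the inertia. This is precisely how the paper argues, and it even appends a remark after the proof warning that one must work with the reduced stack $[U/G]$ because $U$ itself "perhaps is not reduced.'' A second, minor, inaccuracy: Nagata's theorem gives diagonalizability of $G_x^0$ only in positive characteristic, so $T$ need not be a torus when $\characteristic k=0$; but there $T=G_x^0$ is the identity component, hence still normal, and your normality and finiteness conclusions survive unchanged.
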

\begin{proof}[Proof of Proposition \ref{prop.redidenty.inertia} when
    $\ix$ is of finite type over an algebraically closed field $k$]
  \hfill

  \noindent
  By the local structure theorem of \cite[Theorem 4.12]{AHR:15}, for any closed
  point $x \in \ix(k)$ there is an affine scheme $U = \Spec A$
  and a cartesian diagram of stacks and good moduli spaces
  \vspace{-1mm}
  \[
  \xymatrix{
    [\Spec A/G] \ar[r]\ar[d] & \ix \ar[d]^{\pi}\\
    \Spec(A^G) \ar[r] & \bX
  }
  \]
  where the horizontal maps are \'etale neighborhoods of $x$ and $\pi(x)$
  respectively and $G=G_x$ is the stabilizer at $x$.
  Since, the diagram is cartesian, the map
  $[U/G] \to \ix$ is stabilizer preserving
  so the diagram of inertia groups
  \[
  \xymatrix{
    I([U/G]) \ar[r] \ar[d] & I\ix \ar[d]\\
    [U/G] \ar[r] & \ix
  }
  \]
  is also cartesian.
  Since $[U/G]$ is a quotient stack $I([U/G]) = [I_{G} U/G]$
  where $I_{G}U = \{(g,u) : gu = u\} \subset G  \times U$
  is the relative inertia group for the action of $G$ on $U$.
  Here $G$ acts on $I_{G}U$ via $h(g,u)=(hgh^{-1},hu)$.

  Note that $[G_0\times U/G]$, $I([U/G]) \subset
  [G\times U/G]$ are group schemes over $[U/G]$ with fibers
  of dimension $d$, and that $[G_0\times U/G]\to [U/G]$ is smooth (a twisted
  form of $G_0$).
  Lemma \ref{lem.redidentity} applied to
  $[G_0\times U/G]$, $I([U/G])$ and their intersection
  shows that $I([U/G])_0$
  exists and equals $[G_0\times U/G]$.

  If $\characteristic k = 0$ then $G_0 = G^0$ since $G$ is reduced and so $G_0$ is a normal
  subgroup of $G$. If $\characteristic k = p$ then, because $G$ is linearly reductive,
  $G^0$ is diagonalizable by Nagata's theorem~\cite[IV, \S 3, Theorem
    3.6]{DeGa:70}. Thus $G_0\subset G$ is normal as well
  (Remark~\ref{rem.normality}). Hence $I([U/G])_0\subset
  I([U/G])$ is a normal subgroup stack in all characteristics.

  Since $(-)_0$ is unique and commutes with \'etale base change, it follows by
  descent that $(I \ix)_0$ exists and is a normal closed subgroup stack.

  Finally, we note that $I\ix/(I \ix)_0$ is finite since
  $I_{G}U/(G_0\times U)\subset (G/G_0)\times U$ is a closed
  subgroup scheme of a finite group scheme.
\end{proof}

Note that in the proof we worked with the stack $[U/G]$ rather
than with the scheme $U$. The reason is that although $[U/G]$ is reduced
$U$ need not be reduced. However, if $\ix$ is smooth,
then one can arrange for $U$ to be smooth (Remark~\ref{rem.smoothness}).

\begin{proof}[Proof of Proposition \ref{prop.tamegerbe} when $\ix$ is of finite
  type over an algebraically closed field $k$]
  \hfill

  \noindent
  We have seen that the inertia stack $I\ix \to \ix$ contains a
  closed, normal subgroup stack $(I\ix)_0$ which is smooth over $\ix$, such
  that $I\ix/(I\ix)_0 \to \ix$ is finite with fibers that are linearly
  reductive finite groups (Proposition~\ref{prop.redidenty.inertia}).
  By \cite[Appendix A]{AOV:08}, $\ix$ is a
  gerbe over a stack $\ix\thickslash (I\ix)_0$ which is the
  rigidification of $\ix$ obtained by removing $(I\ix)_0$ from the
  inertia. The stack $\ix_\tame = \ix\thickslash (I\ix)_0$ will be the
  desired tame stack. In the \'etale chart in the proof of
  Proposition~\ref{prop.redidenty.inertia}, we have that
  $\ix_\tame = [U / (G/G_0)]$.

  The inertia of $\ix_\tame$ is finite and linearly reductive because its
  pullback to $\ix$ coincides with $I\ix/(I\ix)_0$ (or use the local
  description). Moreover, $\ix\to \ix_\tame$ has the universal property that a
  morphism $\ix\to \iy$ factors (uniquely) through $\ix_\tame$ if and only if
  $(I\ix)_0\to I\iy$ factors via the unit section $\iy\to I\iy$. In particular we
  obtain a factorization $\ix\to\ix_\tame\to \bX$ and $\ix_\tame\to \bX$ is
  the coarse moduli space since it is initial among maps to algebraic spaces.
\end{proof}
\begin{remark}
If $\ix$ is not reduced, then it need not be a gerbe over a tame stack. For
example, take $\ix=[\Spec k[x]/(x^n) / \GG_m]$ where $\GG_m$ acts by
multiplication.
\end{remark}
\begin{remark}
  In positive characteristic, if  $\ix$ is reduced and has constant stabilizer
  dimension but does not
  have a good moduli space then
there need not exist a subgroup stack $(I \ix)_0$. For a counter-example,
take $\ix=BG$ with $G=\GG_m\ltimes \Galpha_p$. Then $I(BG)$ is a reduced
algebraic stack. Also,
even if there is an open and closed subgroup stack $(I\ix)^0$ with connected fibers,
this subgroup stack need not be flat. For a counter-example, take $\ix=[\Spec k[x] /
  \Gmu_p]$ where $\Gmu_p$ acts with weight $1$. Then $(I\ix)^0=I\ix$ has connected fibers
but is not flat, and $(I\ix)_0$ is trivial.
\end{remark}
\section{Fixed loci of Artin stacks} \label{app.fixed}
The purpose of this section is to prove that when $\ix$ is an algebraic stack with a good moduli space, the locus of points with maximal dimensional stabilizer has a canonical closed substack structure.
\begin{lemma} \label{lem.maxlocus}
Let $\ix$ be an algebraic stack. Then the locus of points with
maximal-dimensional stabilizer is a closed subset of $|\ix|$.
\end{lemma}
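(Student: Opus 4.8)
The plan is to realize the maximal-stabilizer locus as the top stratum of the upper semi-continuous stabilizer-dimension function. Concretely, I would study the function $\delta\colon |\ix|\to \ZZ_{\geq 0}$, $x\mapsto \dim G_x$, show it is upper semi-continuous and bounded, and then observe that the locus in question is exactly the (automatically closed) set where $\delta$ attains its maximum.

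First I would reduce to a scheme. Choose a smooth surjective atlas $p\colon U\to \ix$ with $U$ a scheme of finite type over $k$, and pull back the inertia to obtain a group scheme $G_U:=I\ix\times_\ix U\to U$. Since the formation of $G_x$ is compatible with base change, the fiber of $G_U$ over a point $u\in U$ is the stabilizer $G_{p(u)}$, so $\dim (G_U)_u=\dim G_{p(u)}=\delta(p(u))$. Now I would invoke the semicontinuity result already cited in the text: by \cite[Expos\'e VIb, Proposition 4.1]{SGA3}, applied to the group scheme $G_U\to U$, the function $u\mapsto \dim (G_U)_u$ is upper semi-continuous, i.e. for every $d$ the subset $Z_d^U:=\{u\in |U|:\dim G_{p(u)}\geq d\}$ is closed.

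The key point is that $Z_d^U$ is saturated for $p$: since $\delta(p(u))$ depends only on the image point $p(u)$, we have $Z_d^U=p^{-1}(Z_d)$ where $Z_d:=\{x\in |\ix|:\delta(x)\geq d\}$. Because $p$ is smooth and surjective, the map $|U|\to |\ix|$ is an open surjection, hence a topological quotient map, so a subset of $|\ix|$ is closed if and only if its preimage in $|U|$ is closed. Therefore each $Z_d$ is closed in $|\ix|$, which is the upper semi-continuity of $\delta$ on the stack itself.

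Finally I would produce the maximum. Because $\ix$ has affine diagonal of finite type and is of finite type over $k$, the inertia $I\ix\to \ix$ is of finite type, so the total space $G_U$ is of finite type over $k$ and the fiber dimensions are bounded above. Hence $\delta$ takes only finitely many values and attains a maximum $N$, and the locus of points with maximal-dimensional stabilizer is $Z_N=\{x:\delta(x)=N\}$, which is closed by the previous step. I expect the only genuine subtlety to be arranging that $Z_d^U$ is $p$-saturated so that closedness descends along the atlas; once that is in place the statement follows immediately from semicontinuity of fiber dimension for group schemes together with the quasi-compactness of $\ix$.
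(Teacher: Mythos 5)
Your proof is correct and follows essentially the same route as the paper: both arguments rest on upper semi-continuity of the fiber dimension of the inertia group $I\ix\to\ix$ (the paper cites the same SGA3 reference just before Proposition \ref{prop.stable}) and then take the top stratum. You simply spell out the atlas/descent details and the boundedness of the stabilizer dimension that the paper leaves implicit.
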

\begin{proof}
  Since the representable morphism $I\ix \to \ix$ makes $I\ix$ into an
  $\ix$-group, the dimension of the fibers of the morphism is an upper
  semi-continuous function on $|\ix|$. Thus the locus of points with
  maximal-dimensional stabilizer is closed.
\end{proof}
\subsection{Stacks smooth over an algebraically closed field}
Let $X$ be a scheme or an algebraic space of finite type over a field $k$.
If a group scheme $G/k$ acts on $X$, then the fixed locus $X^G$ can be given a
canonical algebraic structure: $X^G$ represents the functor of $G$-equivariant
maps $T\to X$ where $T$ is equipped with the trivial action. The functor $X^G$
is represented by a closed subscheme~\cite[Proposition A.8.10 (1)]{CGP:10}
because the coordinate ring of $G$ is a projective (even free)
$k$-module. 
Moreover, if $G$ is linearly reductive and $X$ is smooth, then $X^G$ is also smooth~\cite[Proposition A.8.10 (2)]{CGP:10}.

\begin{proposition} \label{prop.ixmax}
  If $\ix$ is smooth over an algebraically closed field
  and admits a good moduli space,
  then the locus
$\ix^\maxlocus$ of points with maximal-dimensional stabilizer
  (with its reduced induced substack structure) is a closed smooth
substack. Moreover, if $\ie$ is an snc divisor, then $\ix^\maxlocus$ meets
$\ie$ with normal crossings.
\end{proposition}
\begin{proof}
Let $x$ be a closed point of $\ix^\maxlocus$ and let $G_x$ be its stabilizer group. 
By the local structure theorem \cite[Theorem 4.12]{AHR:15} (and Remark~\ref{rem.smoothness})
there is a smooth, affine scheme $U = \Spec A$ with an action of $G=G_x$ and a cartesian diagram of stacks and moduli spaces
\vspace{-2mm}
\[
\xymatrix{%
[U/G]\ar[r]\ar[d] & \ix\ar[d]^{\pi}\\
U\gitq G\ar[r] & \bX\ar@{}[ul]|\square
}
\]
where the horizontal arrows are \'etale and $u\in U$ is a fixed point above $x$.
It follows that $[U/G]^\maxlocus$ is the inverse image of $\ix^\maxlocus$
under an \'etale morphism. In particular $[U/G]^\maxlocus$ (with its reduced induced 
stack structure) is smooth at $u$ if and only if $\ix^\maxlocus$ is smooth at $x$.

As in \S\ref{ss.reduced.identity}, let $G_0$
be the reduced identity component of $G$.
Then $\dim G_0 = \dim G =n$
and any $n$-dimensional subgroup of $G$ necessarily contains $G_0$.

Since $G/G_0$ is finite, hence affine, so is $BG_0\to BG$. It follows that
$BG_0$ is cohomologically affine, that is, $G_0$ is linearly reductive.

A point of $U$ has maximal-dimensional stabilizer if and only
if it is fixed by the linearly reductive subgroup $G_0$. Thus
$[U/G]^\maxlocus = [U^{G_0}/G]$.
By \cite[Proposition A.8.10(2)]{CGP:10} $U^{G_0}$ is also smooth.
Note that $G$ acts on $U^{G_0}$ because 
$G_0$ is a characteristic, hence normal, subgroup of $G$ (Remark~\ref{rem.normality}).

If $\ie=\sum_{i=1}^m \ie_i$ is an snc divisor on $\ix$, let $D=\sum_{i=1}^m
D_i$ denote the pullback to $U$. For $J\subset \{1,2,\dots,m\}$, let
$D_J=\cap_{i\in J} D_i$. Note that $D$ is snc at $u$ since $D_J$ is smooth
at $u$ for every $J$ (Remark~\ref{rem.smoothness}). Since
$D_J^{G_0}=U^{G_0}\cap D_J$ is smooth at $u$ \cite[Proposition
  A.8.10(2)]{CGP:10}, it follows by flat descent that $\ix^\maxlocus\cap \ie_J$ is
smooth at $x$. Furthermore,
$T_{U^{G_0},x}=(T_{U,x})^{G_0}$ is the trivial part of the $G_0$-representation
$T_{U,x}$ and we obtain the equality
\vspace{-3mm}
\[
\begin{split}
\codim(U^{G_0}\cap D_J,U^{G_0})=\dim (N_{D_J/U,x})^{G_0}
  =\dim \Bigl( \bigoplus_{i\in J} N_{D_i/U,x}\Bigr)^{G_0} \\
=\sum_{i\in J} \dim (N_{D_i/U,x})^{G_0}
  =\sum_{i\in J} \codim(U^{G_0}\cap D_i,U^{G_0}).
\end{split}
\]
This shows that $\ix^\maxlocus$ meets $\ie$ with normal crossings.
\end{proof}

\begin{remark}
Proposition~\ref{prop.ixmax} holds more generally for any smooth algebraic stack
$\ix$ such that the stabilizer of every closed point has linearly reductive
identity component. Indeed, for any closed point $x\in \ix^\maxlocus$ there is
an \'etale
morphism $[U/G_x^0]\to \ix$ and $[U/G_x^0]^\maxlocus = [U^{G_0}/G_x^0]$ is smooth.
\end{remark}

\begin{remark}
For a fixed $n \geq 0$  the set $\ix^n$ of points whose stabilizer dimension is exactly $n$ is locally closed. However $\ix^n$ (with its reduced induced substack structure) need not be smooth. For example, if $\mathfrak{so}_5$ is the adjoint representation of $\SO_5$ and $\ix=[\mathfrak{so}_5/\SO_5]$, then $\ix^4$ has two irreducible components that intersect \cite[Beispiel 6.6(a)]{BoKr:79}.
In addition, the open substack of points whose stabilizer has dimension at most $n$ need not be saturated and consequently  may not have a good moduli space. For example let $\ix = 
[\left(\Proj (\Sym^4 \CC^2)\right)^{ss}/\PSL_2]$. The open substack of points with 0-dimensional stabilizer does not admit a coarse (and hence good) moduli space \cite[Remark 9.2] {KeMo:97}.
\end{remark}

\subsection{General case}
We will now consider noetherian stacks, possibly of mixed characteristic.
We will consider group schemes $G$ over $\ZZ$ that are flat and of finite type
and either
\begin{enumerate}
\item $G$ is diagonalizable, or
\item $G$ is a Chevalley group, that is, smooth with connected split reductive fibers.
\end{enumerate}
In either case we can make sense of the reduced connected component $G_0$. In
the first case, $G=D(A)$ and $G_0=D(A/A_{\mathrm{tor}})$ is a torus. In the second case
$G=G^0=G_0$. The coordinate ring of $G_0$ (as well as $G$) is a free
$\ZZ$-module. Given a scheme (or an algebraic space) $X\to S$ with an action of
$G$, the functor $X^{G_0}$ is therefore represented by a closed
subscheme of $X$~\cite[Proposition A.8.10 (1)]{CGP:10}.
If $(G_0)_S$ is linearly reductive and $X\to S$ is smooth, then $X^{G_0}\to
S$ is also smooth~\cite[Proposition A.8.10 (2)]{CGP:10}.

If $X$ is an arbitrary algebraic space and $\ix=[X/G]$ where $X$ has a $G$-fixed
point, then we can equip $\ix^\maxlocus$ with the substack structure of 
$[X^{G_0}/G]$. The next proposition shows that this substack structure
on $\ix^\maxlocus$ is independent on the presentation $\ix=[X/G]$. Combining
this fact with the local structure theorem
(Theorem~\ref{thm.localstructuregeneral}) we can conclude
that if $\ix$ is an arbitrary stack with a good moduli space, then $\ix^\maxlocus$
has a canonical substack structure. To
achieve this, we start with a slightly different definition of $\ix^\maxlocus$,
not referring to $G$.

\begin{proposition}\label{prop.ixmax-singular}
Let $\ix$ be a noetherian Artin stack. Let $n$ be
the maximal dimension of the stabilizer groups. Consider the functor
$F\colon (\mathrm{Sch}_{/\ix})^{\mathrm{op}}\to
\mathrm{(Set)}$ where $F(T\to \ix)$ is the set of closed subgroup schemes $H_0\subset
I\ix \times_{\ix} T$ that are smooth over $T$ with connected fibers of dimension $n$.
If $\ix$ admits a good moduli space,
then the functor is represented by a closed substack $\ix^\maxlocus$.
In
particular, for any $T\to \ix$, there is at most one such subgroup scheme $H_0$ and it is characteristic.
Moreover,
\begin{enumerate}
\item $|\ix^\maxlocus|$ is the set of points with stabilizer of dimension $n$.
\item If $\ix=[X/G]$ where $X$ is a separated algebraic space and
$G\to \Spec \ZZ$ is of relative dimension $n$ and as
above, then $\ix^\maxlocus=[X^{G_0}/G]$.
\item The following are equivalent (a) $\ix=\ix^\maxlocus$, (b) $I\ix$ contains
  a closed subgroup stack, smooth over $\ix$ with connected fibers of dimension~$n$,
  and (c) $I\ix$ contains a closed subgroup stack, flat over $\ix$
  with fibers of dimension $n$.
\item If $\ix$ is smooth over an algebraic space $S$, then $\ix^\maxlocus$ is
  smooth over $S$. If, in addition, $\ie$ is an snc divisor on $X$ relative
  $S$ (Definition~\ref{def.snc}), then
  $\ix^\maxlocus$ has normal crossings with $\ie$.
\end{enumerate}
\end{proposition}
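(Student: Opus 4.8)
The plan is to build $\ix^{\maxlocus}$ by descent from local models of the form $[U^{G_0}/G]$ and to check that these represent the functor $F$. First I note that $F$ is an fppf sheaf on $(\mathrm{Sch}_{/\ix})$, since being a closed subgroup scheme that is smooth with geometrically connected fibres of a fixed dimension is an fppf-local condition on the base. Hence it suffices to represent $F$ locally and glue: if each restriction of $F$ to a chart is represented by a closed substack, then uniqueness of representing objects supplies canonical isomorphisms over overlaps which descend to a closed substack $\ix^{\maxlocus}\subset \ix$ representing $F$. Since $\ix^{\maxlocus}\to \ix$ is then a closed immersion, in particular a monomorphism, $\mathrm{Hom}_\ix(T,\ix^{\maxlocus})=F(T)$ has at most one element, which is the asserted uniqueness of $H_0$; and $H_0$ is characteristic because $G_0$ is characteristic in $G$ (its identity component $G^0$ being diagonalizable by Nagata, so that $G_0=(G^0)_\red$ is the maximal torus, cf.\ Remark~\ref{rem.normality}).

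For the local statement I use the local structure theorem~\cite[Theorem 2.9]{AHR:15}: around a closed point $x$ I may assume $\ix=[U/G]$ with $U=\Spec A$ affine and $G=G_x$ linearly reductive. All stabilizers in this chart are subgroups of $G$, so if $\dim G<n$ then $F$ is empty on the chart and is represented by the empty substack; it therefore suffices to treat charts at points of maximal stabilizer dimension, where $\dim G=n$ and $G_0:=(G^0)_\red$ has dimension $n$. There $I\ix=[I_GU/G]$ with $I_GU=\{(g,u):gu=u\}$, and I claim $[U^{G_0}/G]$ represents $F$ on $[U/G]$, with universal subgroup $[G_0\times U^{G_0}/G]$; here $U^{G_0}$ is a closed subscheme by~\cite[Proposition A.8.10]{CGP:10} and $G$ acts on it since $G_0$ is normal. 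One direction is clear, by pulling back the universal subgroup. For the converse, given $T\to [U/G]$ and $H_0\in F(T)$, I pass to the $G$-torsor $P=T\times_{[U/G]}U$ with its equivariant map $\sigma\colon P\to U$; there the inertia becomes the stabiliser group scheme $\sigma^*I_GU\subset G\times P$ and $H_0$ pulls back to a smooth closed subgroup $H\subset G\times P$ over $P$ with connected fibres of dimension $n$. Since $\dim G=n$, each geometric fibre $H_{\bar p}$ is a smooth connected subgroup of dimension $n$ of $\mathrm{Stab}_G(\sigma(\bar p))\subset G$, hence equals $G_0$. The desired factorization $T\to[U^{G_0}/G]$ is exactly the assertion that $\sigma$ is $G_0$-fixed, i.e.\ (by the functor of points of $U^{G_0}$) that $G_0\times P\subset \sigma^*I_GU$, which follows once I know $H=G_0\times P$.

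The main obstacle is therefore the following rigidity statement, needed scheme-theoretically over the possibly non-reduced base $P$: a smooth closed subgroup scheme $H\subset G\times P$ over $P$ with every geometric fibre equal to $G_0$ coincides with $G_0\times P$. Over $P_\red$ this is the reduced case and follows from uniqueness of the reduced identity component (Lemma~\ref{lem.redidentity}), so $H$ and $G_0\times P$ agree over $P_\red$. To propagate this across infinitesimal thickenings I use that $G_0$ is linearly reductive (in characteristic zero it is connected reductive, in characteristic $p$ it is a torus). Consequently $H^1(G_0,\mathfrak g/\mathfrak g_0)=0$, so the inclusion $G_0\subset G$ has no nontrivial infinitesimal deformations as a subgroup scheme; hence a smooth subgroup with fibres $G_0$ is rigid, agreeing with $G_0\times P$ over every artinian thickening and so over all of $P$. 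This vanishing of cohomology is the crucial input: it both forces the correct (a priori non-reduced) scheme structure on $\ix^{\maxlocus}$ and makes the gluing consistent.

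Finally I read off the numbered assertions. For (1), $|[U^{G_0}/G]|$ is the set of $G_0$-fixed points of $U$, which are exactly the points whose stabilizer contains $G_0$, i.e.\ has dimension $\geq n$, hence $=n$; conversely any stabilizer of dimension $n$ contains the reduced identity component $G_0$. Assertion (2) is the construction itself, extended from affine $U$ to a general algebraic space $X$ using that $X^{G_0}$ is a closed subspace for the linearly reductive $G_0$. For (3), the implication (a)$\Rightarrow$(b) takes the universal subgroup over $\ix^{\maxlocus}=\ix$, (b)$\Rightarrow$(c) is immediate since smooth morphisms are flat, and (c)$\Rightarrow$(a) follows from (1): a flat subgroup of $I\ix$ with $n$-dimensional fibres forces every stabilizer to have dimension $\geq n$, hence $=n$, so $\ix=\ix^{\maxlocus}$. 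Assertion (4) holds because when $\ix$ is smooth the chart $U$ may be taken smooth~\cite[Theorem 1.1]{AHR:15}, and then $U^{G_0}$ is smooth by~\cite[Proposition A.8.10(2)]{CGP:10}; since the representing scheme structure is precisely that of $U^{G_0}$, the substack $\ix^{\maxlocus}$ is smooth, in agreement with Proposition~\ref{prop.ixmax}.
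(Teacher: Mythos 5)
Your overall strategy --- fppf-sheafiness of $F$, reduction via the local structure theorem to $\ix=[U/G]$ with $G=G_x$ linearly reductive, identification of the representing object as $[U^{G_0}/G]$ with universal subgroup $[G_0\times U^{G_0}/G]$, and a rigidity argument over non-reduced bases to pin down the scheme structure --- is essentially the paper's proof. The paper packages the rigidity step as the rigidity of groups of multiplicative type from SGA3 (after replacing $G$ by its diagonalizable identity component via Nagata) together with a Weil-restriction argument for representability; your vanishing $H^1(G_0,\mathfrak{g}/\mathfrak{g}_0)=0$ for linearly reductive $G_0$ is the standard mechanism behind that rigidity, so this is a repackaging rather than a different route. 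Two small cautions there: in characteristic $p$ the ambient group $G$ need not be smooth, so quoting the deformation theory of subgroup schemes requires either the reduction to the diagonalizable case or some care with hypotheses; and ``agreeing over every artinian thickening and so over all of $P$'' should be run as an induction along the square-zero filtration of the (nilpotent) nilradical of the finite-type scheme $P$, starting from the reduced case (Lemma~\ref{lem.redidentity}). Also, in characteristic zero $G^0$ is not diagonalizable in general; there $G_0=G^0$ is characteristic simply as the identity component.

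There is, however, a genuine gap in your proof of (c)$\Rightarrow$(a) in item (3). You argue that a flat closed subgroup of $I\ix$ with $n$-dimensional fibres forces every stabilizer to have dimension $n$, ``hence $\ix=\ix^\maxlocus$.'' But $\ix=\ix^\maxlocus$ is a scheme-theoretic equality of closed substacks and does \emph{not} follow from the topological statement $|\ix|=|\ix^\maxlocus|$ when $\ix$ is non-reduced: for $\ix=[\Spec k[x]/(x^2)/\GG_m]$ with $\GG_m$ acting by weight $1$, every point has a one-dimensional stabilizer, yet $\ix^\maxlocus=[V(x)/\GG_m]=B\GG_m\subsetneq\ix$ (this is exactly the non-reduced phenomenon behind Corollary~\ref{cor.tamegerbe-singular}). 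The correct argument must use the flat subgroup $H_1$ scheme-theoretically: as in the paper's key lemma, flatness with $n$-dimensional fibres forces $G_0\times T\subset H_1$ (in characteristic zero by the fibrewise criterion of flatness after intersecting with $G^0\times T$, in characteristic $p$ by rigidity of multiplicative-type groups), whence $F(\id_\ix)\neq\emptyset$ and $\ix=\ix^\maxlocus$ follows from representability. Your deduction, as written, proves only the set-theoretic equality and would ``prove'' a false statement in the example above.
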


Let $\ix'\to \ix$ be a stabilizer-preserving morphism.
It follows from the functorial description of $\ix^\maxlocus$ that
if $\ix'$ and $\ix$ have the same maximal
stabilizer dimension, then $\ix'^\maxlocus = \ix^\maxlocus\times_\ix \ix'$.
If not, then $\ix^\maxlocus\times_\ix \ix' = \emptyset$.

Similarly, if $\ix'\to \ix$
is \'etale, representable and separated, but not necessarily
stabilizer-preserving, then
$I\ix'\subset (I\ix)\times_\ix \ix'$ is open and closed and
$\ix^\maxlocus\times_\ix \ix'=\ix'^\maxlocus$
or $\ix^\maxlocus\times_\ix \ix'=\emptyset$.

\begin{proof*}[Proof of Proposition~\ref{prop.ixmax-singular}]
(1) is an immediate consequence of the main claim since if $T=\Spec k$ with $k$ perfect,
then $H_0=(I\ix\times_\ix T)_0$ is the unique choice of $H_0$.
(3a)$\implies$(3b) follows by definition and (3b)$\implies$(3c) is trivial.

To prove the main claim, (3c)$\implies$(3a) and (4) we may work \'etale-locally
around a closed point $x\in |\ix|$ with
stabilizer of dimension $n$. More precisely, if $\ix'\to \ix$ is \'etale,
representable and separated and $\ix'$ has a good moduli space, then
it is enough to deduce the result for $\ix'$.
Using the local structure theorem (Theorem~\ref{thm.localstructuregeneral}),
we can thus assume that $\ix=[X/G]$ where $G$ is either diagonalizable
or split reductive of relative dimension $n$ over $\Spec \ZZ$.
The main claim thus follows from (2).
The following lemma
applied to $H=I\ix\times_\ix T$ implies (2) and (3c)$\implies$(3a).
If $\ix=[X/G]\to S$ is smooth, then $\ix^\maxlocus=[X^{G_0}/G]\to S$ is
smooth~\cite[Proposition A.8.10 (2)]{CGP:10}\footnote{If $x$ has characteristic
  zero and $S$ has mixed characteristic, then $G\times_\ZZ S\to S$ is reductive
  but need not be linearly reductive so \cite[Proposition A.8.10 (2)]{CGP:10}
  does not apply.  We can, however, verify smoothness after replacing $\ix \to
  S$ with the base change along $S':=\Spec \cO^h_{S,s}\to S$ where
  $s$ is the image of $x$. Since $S'$ has characteristic zero,
  $G\times_\ZZ S'\to S'$ is linearly reductive and loc. cit. applies.}.
If in addition $\ie=\bigcup_{i\in I} \ie_i$ is an snc divisor, then
$\ix^\maxlocus$ meets $\ie$ with normal crossings since this can be verified
after replacing $S$ with a base change to an algebraically closed field
where we can conclude by Proposition~\ref{prop.ixmax}. We have now
proved (4).
\end{proof*}

\begin{lemma}
Let $G$ be a group scheme of dimension $n$ over $\Spec \ZZ$ and assume that
$G$ is either diagonalizable or a Chevalley group.
Let $T$ be a scheme and let $H\subset G\times T$ be a closed subgroup scheme.
Then the following are equivalent:
\begin{enumerate}
\item $G_0\times T \subset H$.
\item There exists a closed subgroup scheme $H_0\subset H$ that is smooth over $T$
  with connected fibers of dimension $n$.
\item There exists a closed subgroup scheme $H_1\subset H$ that is flat over $T$
  with fibers of dimension $n$.
\end{enumerate}
and under these conditions $H_0$ is unique and equals $G_0\times T$.

In particular, the functor $F$ representing closed subgroup schemes $H_0\subset H$ that
are smooth with connected fibers of dimension $n$ coincides with
the functor representing the condition that $G_0\times T\subset H$
and this is represented by a closed subscheme of $T$.
\end{lemma}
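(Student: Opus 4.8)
The plan is to settle the two trivial implications, isolate the real content as a scheme-theoretic containment over a possibly non-reduced base, and then read off representability almost formally. Throughout I would work affine-locally on $T$ and assume $T=\Spec R$ is noetherian, since every assertion is local on $T$ and our schemes are of finite type; I would also reduce to $T$ connected when convenient.

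First I would record the fibrewise picture. For (1)$\Rightarrow$(2) take $H_0=G_0\times T$: it is smooth over $T$ with connected fibres of dimension $n$ because $G_0$ is smooth and connected over $k$, and (2)$\Rightarrow$(3) is immediate since smooth implies flat. For the reverse input, observe that for every geometric point $\bar s$ of $T$ the fibre $(H_1)_{\bar s}$ is a closed subgroup scheme of $G_{\bar s}$ of dimension $n=\dim G_{\bar s}$, so its identity component has dimension $n$; since $G_{0,\bar s}$ is the \emph{unique} smooth connected subgroup of $G_{\bar s}$ of dimension $n$ (the reduced identity component, cf.\ Lemma~\ref{lem.redidentity}), we get $G_{0,\bar s}\subseteq (H_1)_{\bar s}$, and a \emph{smooth} subgroup $(H_0)_{\bar s}$ with connected fibres of dimension $n$ is \emph{equal} to $G_{0,\bar s}$. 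Everything therefore reduces to upgrading these statements about geometric fibres to scheme-theoretic statements, and this is the crux.

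The upgrading rests on two devices. The first is a rigidity lemma: if $i\colon Z\hookrightarrow W$ is a closed immersion of $T$-schemes with $Z$ and $W$ both flat over $T$ and $i_{\bar s}$ an isomorphism for every geometric point $\bar s$, then $i$ is an isomorphism. Indeed, from $0\to \iJ\to \cO_W\to \cO_Z\to 0$ with $\cO_W,\cO_Z$ flat over $T$ one gets $\iJ$ flat over $T$, and the hypothesis forces $\iJ\otimes_{\cO_T}\kappa(\bar s)=0$ for all $\bar s$; since $\iJ$ is coherent, Nakayama applied in each local ring of $W$ (using $\mathfrak m_s\cO_W\subseteq \mathfrak m_w$) gives $\iJ=0$. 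The second device is the reduction to $G=G^0$: as $G^0$ is open and closed in $G$, replacing $H_1$ by the open-and-closed, hence still $T$-flat, subgroup $H_1\cap(G^0\times T)$ does not change whether $G_0\times T\subseteq H_1$, and it reduces me to $G=G^0$. In characteristic $0$ this finishes it: there $G_0=G^0$ is smooth, so the reduced $H_1\cap(G^0\times T)$ has fibre $(H_1)_{\bar s}\cap G^0_{\bar s}\supseteq G^0_{\bar s}$, i.e.\ the full fibre $G^0_{\bar s}$, and the rigidity lemma identifies it with $G^0\times T=G_0\times T$, giving (3)$\Rightarrow$(1); the same argument with $H_0$ in place of $H_1$ gives uniqueness.

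The main obstacle is the positive-characteristic case, where $G^0$ may be non-reduced and $G_0=(G^0)_{\red}$ is properly contained in $G^0$, so the open-closed trick no longer produces the full fibre. Here I would invoke Nagata's theorem (used already in Proposition~\ref{prop.redidenty.inertia}): $G^0$ is diagonalizable, say $G^0=D(M)=\Spec k[M]$ with $M$ finitely generated abelian of rank $n$, and $G_0=D(M_{\free})$ is its maximal torus. By the classification of flat closed subgroup schemes of a diagonalizable group scheme over a base \cite[Exp.\ VIII]{SGA3} (after passing to connected components of $T$, so that the relevant character quotient is constant), the reduced-ambient subgroup $H_1\cap(G^0\times T)$ is of the form $D_T(N)$ for a quotient $M\twoheadrightarrow N$; the fibre-dimension hypothesis forces $\operatorname{rank} N=n$, hence $\ker(M\to N)$ is torsion and so contained in $M_{\tors}$, which is exactly the condition $G_0\times T=D(M_{\free})\times T\subseteq D_T(N)$. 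This yields (3)$\Rightarrow$(1), and the smooth connected case forces $N=M_{\free}$, i.e.\ $H_0=G_0\times T$, proving uniqueness. Finally, for representability I would note that the condition ``$G_0\times T'\subseteq H_{T'}$'' is the vanishing of the $\cO_T$-linear composite $\psi\colon \iI_H\hookrightarrow \cO_T\otimes_k\cO(G)\twoheadrightarrow \cO_T\otimes_k\cO(G_0)$, where $\iI_H$ is the ideal of $H$; since $\cO(G_0)$ is a free $k$-module the target is free over $\cO_T$, so the image of $\psi$ is cut out by a well-defined ideal of $\cO_T$ whose zero locus is the desired closed subscheme $T^n$ (this is the fixed-locus representability of \cite[Proposition A.8.10]{CGP:10}). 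By the equivalence (1)$\Leftrightarrow$(2) together with the uniqueness $H_0=G_0\times T'$, applied to every $T'\to T$, the functor $F$ coincides with the subfunctor represented by $T^n$, which completes the proof.
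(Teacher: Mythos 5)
Your proof is correct and follows essentially the same route as the paper's: reduce to $G=G^0$ by intersecting with the open-and-closed identity component, handle characteristic $0$ by the fiberwise criterion of flatness, handle characteristic $p$ via Nagata's theorem and the rigidity/classification of subgroups of a diagonalizable group, and represent the condition $G_0\times T'\subset H_{T'}$ by a closed subscheme of $T$ using that $\cO(G_0)$ is a free $k$-module (the paper phrases this last step as a Weil restriction, you as the vanishing of a map into a free $\cO_T$-module --- the same fact, both resting on [CGP, Prop.\ A.8.10]). The only noticeable difference is that you make the implication (3)$\Rightarrow$(1) in characteristic $p$ explicit via the classification of flat closed subgroups of $D(M)_T$ by quotients $M\twoheadrightarrow N$ with torsion kernel, a point the paper's proof leaves largely implicit in its citation of rigidity of groups of multiplicative type.
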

\begin{proof}
Clearly (1)$\implies$(2)$\implies$(3).
If $G\to \Spec \ZZ$ is smooth and connected of dimension $n$, then
any closed subgroup scheme $H_1\subset
G\times T$ that is flat with fibers of dimension $n$ necessarily equals
$G\times T$ by the fiberwise criterion of flatness. Thus (3)$\implies$(1).
If instead $G$ is diagonalizable, then so is $H_1$. This follows from
\cite[Expos\'e X, Corollaire 4.8]{SGA3} since $H_1$ is commutative
and ${}_n H_1$ is a closed subscheme of ${}_n G\times T$ which is finite over
$T$. It follows that $G_0\times T \subset H_1$ so (1) holds. The fiberwise
criterion of flatness also shows that if $H_0$ is as in (2), then
$G_0\times T\subset H_0$ is an equality.

Thus, the functor $F$ represents the condition that
$G_0\times T \subset H$, or equivalently,
the condition that $H\cap (G_0\times T) \subset G_0\times T$ is the identity, that is,
the Weil restriction
\[
F=\prod_{G_0\times T/T} (H\cap G_0\times T)/(G_0\times T).
\]
This functor is
represented by a closed subscheme since the coordinate ring of $G_0$ is a
free $\ZZ$-module, see~\cite[Expos\'e VIII, Th\'eor\`eme 6.4]{SGA3}.
\end{proof}

\begin{corollary}\label{cor.tamegerbe-singular}
Let $\ix$ be a connected stack with good moduli space $\bX$. Then $\ix$ is a gerbe
over a tame stack if and only if $\ix^\maxlocus =\ix$. More precisely:
\begin{enumerate}
\item\label{item.tamegerbe-singular}
  If $\ix=\ix^\maxlocus$, then there exists a unique closed subgroup stack
  $(I\ix)_0\subset I\ix$ that is smooth over $\ix$ with connected fibers
  of maximal dimension. Moreover, $I\ix / (I\ix)_0\to \ix$ is finite
  and $\ix$ is a smooth gerbe over the tame stack
  $\ix\thickslash (I\ix)_0$.
\item If $\ix$ is an fppf gerbe over a tame stack, then $\ix=\ix^\maxlocus$.
\end{enumerate}
\end{corollary}
\begin{proof}
If $\ix=\ix^\maxlocus$, then $(I\ix)_0$ exists by
Proposition~\ref{prop.ixmax-singular}. Consider the
rigidification $\ix_\tame=\ix \thickslash (I\ix)_0$. By construction, the
stabilizer groups of $\ix_\tame$ are finite and linearly reductive but we do
not yet know that the inertia $I\ix_\tame\to \ix_\tame$ is finite, or
equivalently, that $I\ix/ (I\ix)_0\to \ix$ is finite.

That the inertia is finite can be verified \'etale-locally on $\bX$. By the
local structure theorem (Theorem~\ref{thm.localstructuregeneral}) we can thus
assume that we have a finite \'etale surjective morphism $f\colon [U/G]\to \ix$
with $G\to \Spec \ZZ$ diagonalizable or smooth with connected fibers. Note that
$f^*(I\ix)_0=[G_0 \times_\ZZ U/G]$.  We thus obtain a cartesian diagram
\[
\xymatrix{
[U/G]\ar[r]^-f\ar[d] & \ix\ar[d]\\
[U/(G/G_0)]\ar[r] & \ix_{\mathrlap{\tame}}\ar@{}[ul]|\square
}
\]
where the vertical maps are smooth gerbes and the horizontal maps are
finite \'etale. Since $U$ is separated and $G/G_0$ is finite, it follows that
$\ix_\tame$ has finite diagonal.

Conversely, if $f\colon \ix\to \ix_\tame$ is an fppf-gerbe, then
$If\subset I\ix$ is a flat normal subgroup stack and $I\ix/If=f^*I\ix_\tame$ is
finite. Thus $If$ is closed with fibers of the same dimension as the
stabilizers of $\ix$ and it follows that $\ix^\maxlocus=\ix$ by
Proposition~\ref{prop.ixmax-singular}.
\end{proof}

If $\ix$ is a reduced stack such that every stabilizer has dimension $d$,
then $\ix^\maxlocus=\ix$ so Propositions~\ref{prop.tamegerbe}
and~\ref{prop.redidenty.inertia} are special cases of 
Corollary~\ref{cor.tamegerbe-singular} \itemref{item.tamegerbe-singular}.

\bibliographystyle{dary}
\bibliography{refs}
\end{document}